\documentclass[12pt]{article}
\pdfoutput=1

\usepackage[margin=1in]{geometry}

\usepackage{amsmath,amssymb,amsthm}
\usepackage{mathrsfs}

\usepackage{graphicx}
\usepackage{subcaption}
\usepackage{float}

\usepackage[dvipsnames]{xcolor}

\usepackage[colorlinks=true, allcolors=black]{hyperref}

\usepackage[authoryear, round]{natbib}

\theoremstyle{plain}
\newtheorem{theorem}{Theorem}[section]
\newtheorem{lemma}[theorem]{Lemma}
\newtheorem{proposition}[theorem]{Proposition}
\newtheorem{corollary}[theorem]{Corollary}
\theoremstyle{definition}
\newtheorem{remark}[theorem]{Remark}
\newtheorem{assumption}{Assumption}

\usepackage{setspace}
\usepackage{microtype}
\allowdisplaybreaks[4]

\newcommand{\R}{\mathbb{R}}

\renewcommand{\epsilon}{\varepsilon}
\renewcommand{\hat}{\widehat}
\renewcommand{\tilde}{\widetilde}

\newcommand{\E}{\mathop{\mathbb{E}}\nolimits}

\let\liminf\relax
\newcommand{\liminf}{\mathop{\underline{\mathrm{lim}}}\limits}
\let\limsup\relax
\newcommand{\limsup}{\mathop{\overline{\mathrm{lim}}}\limits}

\DeclareMathOperator*{\argmin}{arg\,min}

\def\f{\frac}
\newcommand{\calX}{\mathcal{X}}
\DeclareMathOperator{\Var}{Var}
\newcommand{\ftil}{\widetilde f}
\def\bff{\mathbf{f}}

\def\veps{\varepsilon}
\def\N{{\mathbb{N}}}
\def\P{{\mathbb{P}}}

\def\scrF{{\mathscr F}}
\def\muhat{\widehat{\mu}}
\def\fhat{\widehat{f}}
\def\uhat{\widehat{u}}
\def\vhat{\widehat{v}}

\def\Ybar{{\overline Y}}

\def\vepsbar{{\overline\veps}}
\def\Z{{\mathbb{Z}}}
\def\L{{\mathbb{L}}}
\def\U{{\mathbb{U}}}
\newcommand{\ind}{\mathbf{1}}
\def\Ytil{\widetilde{Y}}

\title{Statistical Inference for Additive Monotone Models\\
       under the Fixed Lattice Design}

\author{%
  \textbf{Huachen Ren}\\[2pt]
  Department of Statistics,\\
  Rutgers, the State University of New Jersey\\[2pt]
  \texttt{hr306@scarletmail.rutgers.edu}
}

\date{}

\begin{document}

\maketitle

\begin{abstract}
We study statistical inference for least squares estimators (LSEs)
in additive monotone models under a general fixed lattice design.
We establish joint limiting distributions for the LSEs and show that the estimators
of different additive components are asymptotically independent.
The form of the limiting distribution of each component is
determined by how fast the number of design points along the corresponding
coordinate grows relative to the total sample size~\(n\).
Apart from this growth rate, the limit depends only on the noise level
and, in the non-Gaussian regimes, on the local derivative of the component.
In particular, the limit does not depend on the dimension of the model.
When the number of design
points along a coordinate grows faster than \(n^{1/3}\), we construct
tuning-free pointwise confidence intervals based on a pivotal limiting
distribution, and we validate the theory in numerical simulations.
We further show that the block-size normalization underlying these
intervals fails to be pivotal at the critical growth rate \(n^{1/3}\).
We also prove a switching lemma, of independent interest, that simplifies
the derivation of limiting distributions in isotonic regression.
\end{abstract}

\noindent\textbf{Keywords:} isotonic regression, additive model, shape constraint,
fixed lattice design, limiting distribution, confidence interval, Chernoff distribution.

\bigskip

\def\rhat{\widehat r}
\def\ellhat{\widehat\ell}

\section{Introduction}
Nonparametric regression is widely used in statistical applications to model complex relationships between a response $Y$ and covariates $X$, without imposing a predetermined functional form. Classical estimators based on splines and kernels \citep{tsybakov2009nonparametric,wahba1990spline} typically involve tuning parameters, which can present practical challenges. Moreover, when the covariates are multi-dimensional, these estimators often suffer from the curse of dimensionality. Nonparametric additive models provide a popular multivariate extension of univariate nonparametric regression and have been studied extensively; see \citet{Hastie1990}. The additive structure separates covariate effects into component functions, which makes the model easy to interpret. Allowing each component function to have its own smoothness level makes the model flexible in practice. In the seminal work of \citet{stone1985additive}, it was shown that the additive components can be estimated at the same optimal rates as in the univariate setting, thereby avoiding the curse of dimensionality. However, spline- and kernel-based estimators for additive models typically still require hyperparameter tuning.

In many applications, researchers have prior qualitative knowledge such as monotonicity or convexity relationships between $Y$ and components of $X$, for example in biology \citep{obozinski2008consistent}, psychology \citep{kruskal1964multidimensional}, and economics \citep{matzkin1991semiparametric}. Under such shape constraints (monotonicity, convexity, unimodality, etc.), least squares and/or maximum likelihood estimation yields tuning-parameter-free estimators with optimal convergence rates \citep{zhang2002risk,guntuboyina2018nonparametric}, as well as tuning-parameter-free inference procedures \citep{deng2021confidence}.

In modern applications, data are often high-dimensional and come with domain knowledge. Motivated by this, we study the least squares estimator (LSE) for an additive model under monotonicity constraints, which combines the advantages of additive modeling \citep{hastie1986generalized} and isotonic regression \citep{ayer1955empirical, eeden1957maximum, brunk1955maximum}. Given observations $\{(X_i, Y_i)\}_{i=1}^n$ with $X_i=(x_{i,1},\ldots,x_{i,d})^T\in\R^d$ and $Y_i\in\R$, the $d$-dimensional additive monotone model is
\begin{align}\label{eqn-model}
	Y_i &=\mu^* + f_1^*(x_{i,1})+{\cdots}+f_d^*(x_{i,d})+\veps_i,
\end{align}
where $\mu^* \in \R$ is an unknown intercept and $f_1^*,\ldots,f_d^*$ belong to the class $\scrF$ of nondecreasing functions from $[0,1]$ to $\R$. For identifiability, we impose $\sum_{i=1}^n f^*_j(x_{i,j})=0$ for $j=1,\ldots,d$. The errors $\veps_1,\ldots,\veps_n$ are i.i.d.\ with mean zero and variance $\sigma^2$. Since shape-constrained function classes are convex, the least squares estimator is well defined as the projection of the response vector onto the corresponding convex set, and requires no tuning parameters. The LSEs $(\muhat,\fhat_1,\ldots,\fhat_d)$ are defined as any minimizer of
\begin{align}
	(\muhat, \fhat_1, \ldots, \fhat_d) \in \argmin_{(\mu, f_1,\ldots, f_d)\in \R \times \scrF \times \cdots \times \scrF}\sum_{i=1}^n\big(Y_i-\mu - f_1(x_{i,1})-\cdots-f_d(x_{i,d})\big)^2.\label{LSE}
\end{align}
By the convexity of the function class and the projection theory of Hilbert space, $\muhat+\fhat_1+\cdots+\fhat_d$ is uniquely defined on the design points. For the identifiability issue, we further assume that $\sum_{i=1}^n\fhat_j(x_{i,j})=0$ for $j=1,{\ldots}, d$.

\subsection{Related work}
Since the additive monotone model was first proposed by \citet{bacchetti1989additive}, it has been applied in a range of areas \citep{morton2000additive, de2002statistical}. \citet{bacchetti1989additive} proposed a cyclical pool-adjacent-violators algorithm (cyclical PAVA) that combines backfitting \citep{Hastie1990} with the pool-adjacent-violators algorithm (PAVA) \citep{ayer1955empirical} to compute least squares estimators for additive monotone models. \citet{mammen2007additive} showed that, under random design, the componentwise LSE computed by cyclical PAVA is asymptotically equivalent to the LSE in the corresponding univariate problem, which is often referred to as an \emph{oracle property}. Oracle properties for general additive models were studied in \citet{mammen1999existence}. \citet{fang2012lasso} studied additive monotone models from a high-dimensional perspective and developed modified backfitting algorithms for high-dimensional settings.

\citet{meyer2013simple, meyer2013semi} developed a ``hinge'' algorithm for quadratic programming, applied it to additive shape-constrained models, and studied identifiability, statistical inference, and degrees of freedom using the cone structure of these models. Degrees of freedom and model selection for additive monotone models were also studied in \citet{rueda2013degrees}. \citet{chen2016generalized} developed an active-set algorithm for generalized additive shape-constrained index models and studied uniform consistency of the LSE on compact sub-intervals. \citet{han2018robustness} proved that LSEs for additive models with shape constraints are robust to heavy-tailed noise and model misspecification under certain entropy conditions. Rates of convergence and asymptotic distributions for LSEs in partially linear monotone models were studied in \citet{huang2002note, cheng2009semiparametric, cheng2012empirical, yu2014partial}.

Since the LSEs of shape-constrained models are typically not smooth, shape-constrained regression splines  \citep{ramsay1988monotone, meyer2008inference} were applied to construct smooth estimators for additive shape-constrained models. \citet{tutz2007generalized} combined I-splines  \citep{ramsay1988monotone} and boosting to study generalized additive monotone models. \citet{meyer2011bayesian} studied generalized partial linear regression under shape and smoothness constraints from a Bayesian perspective.
\citet{pya2015shape} proposed a framework for modeling generalized additive models with shape constraints based on penalized splines. See \citet{meyer2018framework} and \citet{engebretsen2019additive} for more detailed reference lists on additive shape-constrained models.

Despite the literature above, limiting distributions and statistical inference for LSEs in additive monotone models under fixed lattice designs have not been established, and this paper addresses this gap. Our main results build on a key observation of \citet{guntuboyina2018nonparametric}, which shows that the LSE under a fixed lattice design is equivalent to the LSE in a univariate isotonic regression problem with a triangular array of errors. The local asymptotic distribution for univariate isotonic regression has been studied extensively \citep{brunk1970estimation, wright1981asymptotic, leurgans1982asymptotic, anevski2006general, durot2018limit}. A recent paper \citet{mallick2023new} studies the limiting distribution of univariate isotonic regression with triangular-array errors, allowing the target function to depend on sample size and removing the independence requirement between covariates and errors. However, it is unclear whether existing results apply directly in our setting, especially when the numbers of design points are imbalanced across dimensions and can have different orders of magnitude relative to the local convergence rates.

\subsection{Our contributions}

Our first contribution is a new method for establishing the limiting distribution of $\fhat_j$, which differs from both the ``direct approach'' of \citet{rao1969estimation} and the ``inverse approach'' of \citet{groeneboom1985estimating}. The direct approach typically requires proving that a certain min--max operator is a continuous functional of the underlying stochastic process \citep{anevski2006general, durot2018limit}, which can be nontrivial for general processes \citep{anevski2006general} and in multivariate settings \citep{han2020limit}. Our key step converts the min--max operator into the sum of two maxima over two independent partial-sum processes, which allows us to establish convergence of the distribution function of the LSE. This reduces the problem to studying maxima of partial-sum processes rather than a nested min--max functional. The exact statement appears in Lemma~\ref{lm-switch}, which also helps bypass the small-deviation arguments used in \citet{han2020limit}.

Building on Lemma~\ref{lm-switch}, we establish limiting distributions for LSEs in additive monotone models under a fixed lattice design. Depending on how many design points lie along each coordinate, three asymptotic regimes arise. Suppose there are $n^{\beta_j}$ design points along the $j$th coordinate, where $\beta_j>0$ controls the sampling resolution in that dimension. Let $x_0=(x_{0,1},\ldots,x_{0,d})^T\in(0,1)^d$ be a design point, and let $f_j^{*\prime}(x_{0,j})$ denote the derivative of $f_j^*$ at $x_{0,j}$. Then the LSE $\fhat_j$ satisfies
\begin{align}\label{eqn-0}
    w_{n,j}^{-1}\left(\fhat_j(x_{0,j}) - f_j^*(x_{0,j})\right) \rightsquigarrow 
     \mathbb{D}_{\beta_j, f_j^{*\prime}(x_{0,j})}, 
\end{align}
where $w_{n,j}$ is the local rate of convergence of $\fhat_j$, and
$\mathbb{D}_{\beta_j, f_j^{*\prime}(x_{0,j})}$ is the limiting random variable
defined in Section~\ref{limit-them-section}. Its distribution depends on
$\beta_j$, $f_j^{*\prime}(x_{0,j})$, and $\sigma$. Furthermore, the vector of
normalized estimation errors converges jointly to a random vector with mutually
independent components. When $\beta_j>1/3$, $\mathbb{D}_{\beta_j, f_j^{*\prime}(x_{0,j})}$ coincides
with the limiting distribution of standard univariate isotonic regression. When $\beta_j=1/3$, the limiting distribution is the left derivative of the greatest convex minorant of a two-sided random walk plus a quadratic drift. The greatest convex minorant of a function $f$ is the largest convex function smaller than $f$. The limiting distribution becomes a Gaussian distribution for $\beta_j<1/3$. The asymptotic distribution of $\fhat_j(x_{0,j})$ does not depend on the dimension $d$. As far as we know, this is the first result for the limiting distributions of LSEs of the additive monotone model under the general fixed lattice design.

Our second contribution is a valid inference procedure for the LSEs in additive monotone models. When $\beta_j>1/3$, the limiting distribution in \eqref{eqn-0} depends on the unknown derivative $f_j^{*\prime}(x_{0,j})$, so \eqref{eqn-0} does not directly yield confidence intervals. Since a key advantage of shape-constrained methods is the absence of tuning parameters, we seek a tuning-free approach to inference that avoids derivative estimation. For testing monotone functions, likelihood ratio tests have been studied extensively \citep{banerjee2001likelihood, banerjee2007likelihood}. More recently, \citet{deng2021confidence} proposed a different tuning-free procedure for monotone functions using information beyond point estimates in multivariate isotonic regression, and these ideas were extended to convexity-constrained models in \citet{deng2022inference}. We follow \citet{deng2021confidence} and prove a pivotal limiting distribution for the additive monotone LSE (Theorem~\ref{inference-thm}). The min--max characterization of the univariate isotonic LSE is classical \citep{barlow1972statistical,robertson1988order,groeneboom2014nonparametric}. The quantity on the left-hand side of \eqref{eqn-kink} below is the oracle isotonic fit $\fhat_j^{or}(x_{0,j})$. By Lemma~3.1 of \citet{guntuboyina2018nonparametric}, $\fhat_j=\fhat_j^{or}-\Ybar$, where $\Ybar=n^{-1}\sum_{i=1}^nY_i$. This common additive shift leaves the fitted constant intervals unchanged. For dimension $j$, let $\vhat_j$ and $\uhat_j$ be the lower and upper endpoints of the largest interval of consecutive design points containing $x_{0,j}$ on which the fitted component $\fhat_j$ is constant. Equivalently,
\begin{align}\label{eqn-kink}
    \min_{u_j \ge x_{0,j}} \max_{v_j \le x_{0,j}} \f{\sum_{i=1}^n Y_i\ind\{ x_{i,j} \in [v_j, u_j] \}}{\sum_{i=1}^n\ind\{ x_{i,j} \in [v_j, u_j] \}} = \f{\sum_{i=1}^n Y_i\ind\{ x_{i,j} \in [\vhat_j, \uhat_j] \}}{\sum_{i=1}^n \ind\{ x_{i,j} \in [\vhat_j, \uhat_j] \}}.
\end{align}
Define the corresponding scaled right and left widths by
\[
    \widehat r_j=\f{\uhat_j-x_{0,j}}{w_{n,j}},
    \qquad
    \widehat\ell_j=\f{x_{0,j}-\vhat_j+n_j^{-1}}{w_{n,j}}.
\]
Then
\[
    n_{\widehat r_j,\widehat\ell_j}^j
    :=\sum_{i=1}^n\ind\!\left\{
      x_{i,j}\in(x_{0,j}-\widehat\ell_jw_{n,j},
                 x_{0,j}+\widehat r_jw_{n,j}]
    \right\}
    =\sum_{i=1}^n\ind\{x_{i,j}\in[\vhat_j,\uhat_j]\}
\]
is the exact number of observations in that fitted interval. Then, for $\beta_j> 1/3$,
\begin{align*}
    \sqrt{n_{\widehat r_j,\widehat\ell_j}^j}(\fhat_j(x_{0,j}) - f_j^*(x_{0,j})) \rightsquigarrow \sigma \L_j,
\end{align*}
where $\sigma$ is the noise level, and $\L_j$ does not depend on $f_j^{*\prime}(x_{0,j})$ and is a special case of the pivotal limiting distribution in \citet{deng2021confidence}. Based on the above limiting distribution, we propose the following pointwise confidence interval for $f_j^*(x_{0,j})$:
\begin{align}\label{confidence-interval-def}
    \mathcal{I}_n(x_{0,j};c_{\delta}, j):= \left[ \fhat_j(x_{0,j}) - \f{c_{\delta}\hat{\sigma}}{\sqrt{n_{\widehat r_j,\widehat\ell_j}^j}}, \fhat_j(x_{0,j}) + \f{c_{\delta}\hat{\sigma}}{\sqrt{n_{\widehat r_j,\widehat\ell_j}^j}} \right],
\end{align}
where $\hat{\sigma}$ is the square root of a consistent estimator of $\sigma^2$, and $c_{\delta}$ is the upper-$\delta$ critical value of $|\L_j|$, so that the interval has asymptotic coverage $1-\delta$.  When $\beta_j=1/3$, the normalization by $\sqrt{n_{\widehat r_j,\widehat\ell_j}^j}$ proposed by \citet{deng2021confidence} is not pivotal. Proposition~\ref{failure-inference} shows that the resulting limiting distribution still depends on the unknown derivative $f_j^{*\prime}(x_{0,j})$, and Section~\ref{simulation-failure} illustrates this failure numerically. Whether some other tuning-free normalization of the LSE is pivotal in the critical regime remains an open problem (see Section~\ref{discussion-sec}).

The rest of this paper is organized as follows. Section \ref{limit-them-section} introduces the problem setting and presents our results on the joint limiting distributions of the LSEs under the fixed lattice design. The pivotal limiting distributions and the construction of confidence intervals are presented in Section~\ref{inference-section}. Section~\ref{simulation-sec} contains simulations that validate our theoretical findings, and Section~\ref{discussion-sec} discusses open problems.
The proofs of our main results in Section \ref{limit-them-section} and Section \ref{inference-section} are deferred to Appendices \ref{sec-proof-lim-them} and \ref{pivot-limit-section}, and Appendix \ref{tech-lemma-sec} collects the technical lemmas.

\subsection{Notation}
For a real-valued function $f$ defined on $\R$, we denote its first-order derivative by $f'$ and its greatest convex minorant by $gcm(f)$, which is the largest convex function lying below $f$. The left derivative of $gcm(f)$ at a point $x$ is written as $gcm(f)'_-(x)$. The cardinality of a set $A$ is denoted by $|A|$. For $k \in \N$ and subsets $A_1,\ldots,A_k \subset \R$, we denote their Cartesian product by $\prod_{i=1}^{k}A_i \subset \R^k$. For two vectors $x \in \R^d$ and $y \in \R^d$, let $x \odot y$ denote the elementwise product of $x$ and $y$, that is, $(x_1y_1,\ldots,x_dy_d)^T$. We use $\rightsquigarrow$ to denote weak convergence of probability distributions.

For a function $f: [0,1] \to \mathbb{R}$ or an array $f$ indexed by $[0,1]$, and any finite subset $A \subset [0,1]$, define the sample mean of $f$ over $A$ as
\begin{align*}
\overline{f}|_{A} = \frac{1}{|A|}\sum_{x \in A}f(x).
\end{align*}

For two real numbers $a,b$, $a\vee b \equiv \max\{a,b\}$ and $a\wedge b\equiv \min\{a,b\}$. $C_x$ will denote a generic finite constant that only depends on a generic quantity $x$, whose numeric value may change from line to line unless otherwise specified. $a\lesssim_x b$ and $a\gtrsim_x b$ mean $a\le C_xb$ and $a\ge C_xb$ respectively, and $a\asymp_x b$ means $a\lesssim_x b$ and $a\gtrsim_x b$ ($a\lesssim b$ means $a\le Cb$ for some absolute constant $C$). For two sequences $(a_n)$ and $(b_n)$ (with $b_n>0$), we write $a_n \ll b_n$ (resp. $a_n \gg b_n$) if $a_n/b_n \to 0$ (resp. $a_n/b_n \to \infty$) as $n\to\infty$. $O_p$ and $o_p$ denote the usual big and small $O$ notations in probability. We write $\Z$, $\Z_{\neq 0}$, $\Z_{\ge 0}$, and $\Z_{\ge 1}$ for the integers, the nonzero integers, the nonnegative integers, and the positive integers, respectively.

\section{Limiting distributions under the fixed lattice design}\label{limit-them-section}
We first state the local smoothness and fixed-design assumptions used to derive the limiting distributions of the LSEs in additive monotone models.

\begin{assumption}\label{smoothness-assumption}{For each $j=1,\ldots,d$, $f_j^*$ is nondecreasing on $[0,1]$ and differentiable at $x_{0,j}$ with $f_j^{*\prime}(x_{0,j})>0$, in the sense that for all $L_0>0$,}
    \begin{align}\label{local-smooth-assumption}
        \lim_{\delta \searrow 0} \delta^{-1} \sup_{\substack{x \in [0,1],\\ |x- x_{0,j}| \le L_0\delta}}|f_j^*(x) - f_j^*(x_{0,j}) - f_j^{*\prime}(x_{0,j})(x - x_{0,j})| = 0.
    \end{align}
\end{assumption}
Next, we state our assumption on the design points $\{X_1, \ldots, X_n\}$.
\begin{assumption}\label{fixed-design-assumption}
The set of design points $\mathcal{X} := \{X_1, \ldots, X_n\} \subset \R^d$ is fixed and is the full Cartesian lattice $\mathcal{X} = \prod_{l=1}^d \mathcal{X}_l$ with $\mathcal{X}_l = \{k/n_l : k = 1, \ldots, n_l\} \subset [0,1]$, and $x_0 \in \mathcal{X}$. In particular, $n = \prod_{l=1}^d n_l$. Asymptotics are taken along a sequence of such designs in which $n_l \to \infty$ for every $l$ and the exponents $\beta_l := \log n_l / \log n$ stay fixed, so that $n_l = n^{\beta_l}$ exactly, with $\beta_l \in (0,1)$ and $\sum_{l=1}^d \beta_l = 1$.
\end{assumption}

\begin{remark}
Fixing the exponents in Assumption~\ref{fixed-design-assumption} requires each $\beta_l$ to be rational: if $\beta_l = p_l/q$ with $\sum_{l=1}^d p_l = q$, the lattices with $n_l = m^{p_l}$ for $m \in \N$ satisfy the assumption. Irrational exponents can be accommodated by requiring only $\log n_l / \log n \to \beta_l$, at the cost of carrying approximation terms through the arguments. We omit these routine modifications.
\end{remark}

We first consider the rate of convergence of $\fhat_j$ and denote it by $w_{n,j}$. Heuristically, when the first-order derivative of $f_j^*$ does not vanish, the least squares estimator of $f_j^*$ can be localized to a strip along the $j$th dimension containing roughly $n^{\beta_j}w_{n,j} \cdot n^{1-\beta_j}$ observations. The bias is of order $w_{n,j}$ and the standard deviation of the strip average is of order $n^{-(1-\beta_j)/2}/\sqrt{n^{\beta_j}w_{n,j}}$. Balancing the bias against the noise, subject to the constraint that the strip contains at least one design point, gives
\begin{align*}
	w_{n,j} \asymp \f{n^{-(1-\beta_j)/2}}{\sqrt{n^{\beta_j}w_{n,j}}}, \qquad n^{\beta_j}w_{n,j} \ge 1,
\end{align*}
which implies
\begin{align}\label{rate-cvgc-def}
    w_{n,j} =
    \begin{cases}
        n^{-(1-\beta_j)/2},  & \beta_j <\f{1}{3},  \\
        n^{-1/3}, & \beta_j \ge 1/3. \\
    \end{cases}
\end{align}
A detailed proof of the rate of convergence can be found in Proposition \ref{rate-cvgc}.

Given a sequence of independent standard normal random variables $\{ Z_i \}_{i \in \Z_{\neq 0}}$, consider the associated two-sided random walk $S$ with $S_0 = 0$,  $S_n = \sum_{i = 1}^n Z_i$  and $S_{-n} = \sum_{i=1}^{n} -Z_{-i}$ for $n \ge 1$ and the random piecewise linear function $t \rightarrow S(t)$, $t \in \R$, obtained by linear interpolations
between the values $S(n):= S_n$ for $n \in \Z$. Define $p_n = \f{n(n+1)}{2}$ for $n \in \Z$ and $p(t)$ as the linear interpolation between the values $p(n):=p_n$ for $n \in \Z$. 
For $\alpha \in \R$, define
\begin{align}
    \mathbb{G}_{\alpha}&:= gcm(\sigma S+\alpha p)'_-(0)\nonumber\\
    &=\inf_{r \in \Z_{\ge 0}} \sup_{\ell \in \Z_{\ge 1} } \left\{\f{ \sigma S(r) - \sigma S(-\ell)}{r+\ell}+\f{\alpha(r-\ell+1)}{2}\right\},\label{gcm-rw-def}
\end{align}
where the equality is by the definition of the left derivative of the greatest convex minorant of a function, see Section 3.3 in \citet{groeneboom2014nonparametric}.

For $\beta \in (0,1)$ and $\alpha > 0$, let $\mathbb{D}_{\beta,\alpha}$ denote a random variable defined by
\begin{align}\label{limit-dist-def}
    \mathbb{D}_{\beta, \alpha}=
    \begin{cases}
        (4\sigma^2\alpha)^{1/3}\mathbb{C},  & \beta >\f{1}{3},  \\
         \sigma Z, & \beta <1/3, \\
         \mathbb{G}_{\alpha}, & \beta = 1/3,
    \end{cases}
\end{align}
where $\mathbb{C}:=\argmin_{h \in \R}\{W(h)+h^2\}$ has the Chernoff distribution, $W$ is a two-sided Brownian motion starting from zero, $Z\sim N(0,1)$, and $\mathbb{G}_{\alpha}$ is defined in \eqref{gcm-rw-def}. The notation suppresses the dependence on $\sigma$.

\begin{theorem}\label{theorem-lattice}
    Suppose Assumptions \ref{smoothness-assumption} and \ref{fixed-design-assumption} hold. Let $D_1,\ldots,D_d$ be mutually independent random variables such that
    $D_j\stackrel{d}{=}\mathbb{D}_{\beta_j,f_j^{*\prime}(x_{0,j})}$ for each $j$. With $w_{n,j}$ and $\mathbb{D}_{\beta, \alpha}$ defined in \eqref{rate-cvgc-def} and \eqref{limit-dist-def}, we have
 \begin{align*}
    \begin{pmatrix} w_{n,1}^{-1}\left(\fhat_1(x_{0,1}) - f_1^*(x_{0,1})\right) \\ \vdots \\ w_{n,d}^{-1}\left(\fhat_d(x_{0,d}) - f_d^*(x_{0,d})\right) \end{pmatrix} \rightsquigarrow 
    \begin{pmatrix} D_1 \\ \vdots \\ D_d \end{pmatrix}.
 \end{align*}
\end{theorem}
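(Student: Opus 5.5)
Via Lemma~3.1 of \citet{guntuboyina2018nonparametric}, I will reduce the additive problem to $d$ univariate isotonic regressions. On the full lattice, $\fhat_j=\fhat_j^{or}-\Ybar$, where $\fhat_j^{or}$ is the univariate isotonic LSE built from the column averages
\[
\Ybar_{j,k}=\frac{1}{n/n_j}\sum_{i:\,x_{i,j}=k/n_j}Y_i=\mu^*+f_j^*(k/n_j)+\vepsbar_{j,k},\qquad k=1,\ldots,n_j .
\]
The other components drop out of these averages because of the centering constraint $\sum_i f_l^*(x_{i,l})=0$. The averaged errors $\vepsbar_{j,k}$ are i.i.d.\ in $k$ with variance $\sigma^2 n_j/n=\sigma^2 n^{-(1-\beta_j)}$. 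Since $\Ybar-\mu^*=O_p(n^{-1/2})=o_p(w_{n,j})$, it suffices to study $w_{n,j}^{-1}(\fhat_j^{or}(x_{0,j})-\mu^*-f_j^*(x_{0,j}))$ jointly in $j$. For each $j$ this is a univariate isotonic regression with $m=n_j$ equispaced points and a triangular array of errors of size $\tau_n=\sigma n^{-(1-\beta_j)/2}$. I will express it through the min--max formula \eqref{eqn-kink} over block endpoints $u\ge x_{0,j}\ge v$.

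\textbf{Marginal limits via the switching lemma.} For fixed $t$ I will write the event $\{w_{n,j}^{-1}(\fhat_j^{or}(x_{0,j})-\mu^*-f_j^*(x_{0,j}))\le t\}$ using the switching Lemma~\ref{lm-switch}. This turns the min--max into a statement about two independent partial-sum processes: the right process built from $\vepsbar_{j,k}$ with $k/n_j>x_{0,j}$ and the left process built from those with $k/n_j\le x_{0,j}$, each with drift $f_j^{*\prime}(x_{0,j})\cdot(\text{lag})-t$. The event is then characterized by a sum of two maxima, so its probability factorizes into the product of two maxima distributions. The three regimes follow from the localization scale. Blocks have length $\asymp w_{n,j}$ and contain $n_jw_{n,j}$ points.
\begin{itemize}
\item If $\beta_j>1/3$, then $n_jw_{n,j}\to\infty$. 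Donsker (via Lindeberg for the triangular array) gives two-sided Brownian motion with quadratic drift $f_j^{*\prime}h^2/2$, and rescaling yields $(4\sigma^2\alpha)^{1/3}\mathbb{C}$.
\item If $\beta_j=1/3$, then $n_jw_{n,j}=1$. The block counts stay discrete, and the normalized partial sums converge as finite-dimensional vectors to $\sigma S$ plus drift $\alpha p$; this yields $\mathbb{G}_\alpha$ in \eqref{gcm-rw-def}. For non-Gaussian errors I will use a CLT per column average, since each $\vepsbar_{j,k}$ is an average of $n^{1-\beta_j}\to\infty$ i.i.d.\ errors.
\item If $\beta_j<1/3$, then $n_jw_{n,j}\to0$. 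The drift between neighbouring points, $f_j^{*\prime}/n_j$, dominates the noise $\tau_n$, so with probability tending to one the optimal block is the singleton $\{x_{0,j}\}$, and the limit is $\sigma Z$.
\end{itemize}
Working with maxima of partial sums rather than a nested min--max means that only convergence of suprema and tightness are needed, and no continuity of the min--max functional.

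\textbf{Localization.} I will use Proposition~\ref{rate-cvgc} together with maximal inequalities for the partial-sum processes (Kolmogorov/L\'evy--Ottaviani type with the quadratic drift) to show that the maxima are attained within $O_p(1)$ rescaled lags. This lets me truncate to compact index ranges, uniformly enough to pass to the limit. It also uses Assumption~\ref{smoothness-assumption} to replace $f_j^*$ by its linearization on windows of size $L_0w_{n,j}$.

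\textbf{Asymptotic independence.} This is the step I expect to be the main obstacle. The local statistic for component $j$ depends, up to $o_p(1)$, only on column averages $\vepsbar_{j,k}$ with $|k/n_j-x_{0,j}|\le Lw_{n,j}$, which come from slabs $\{x_{i,j}=k/n_j\}$. Different $j$'s use intersecting slabs, so exact independence fails. I plan to show that the dependence is negligible as follows. For $j\ne l$, the $j$-slab window and the $l$-slab window intersect in a fraction $(Ln_jw_{n,j}\vee1)/n_j\cdot(Ln_lw_{n,l}\vee1)/n_l$ of the lattice. This lets me decompose each $\vepsbar_{j,k}$ into a part from points outside all other components' windows plus a remainder. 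The remainder has variance that is a vanishing fraction of $\Var(\vepsbar_{j,k})$, because $(Ln_lw_{n,l}\vee1)/n_l\to0$ as $w_{n,l}\to0$ and $n_l\to\infty$. The leading parts for different $j$ are built from disjoint sets of errors, hence independent. To make this precise, I will compute joint finite-dimensional characteristic functions of the vector of all localized partial-sum processes and show that the cross-covariances vanish. This is Gaussian by the multivariate Lindeberg CLT, so zero limiting covariance gives independence. Tightness and the localization then transfer independence to the joint distribution functions from the factorized switching representations, which yields the product limit.
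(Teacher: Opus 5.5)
Your proposal is correct. The reduction to the oracle fits, the use of Lemma~\ref{lm-switch} to split the min--max into two independent one-sided extremes, and the three-regime marginal analysis all follow the paper. One wording fix: $\P\{L_n-R_n\le0\}$ is not a product of two distribution functions; what you actually use is that the joint law of the two extremes is a product. The genuine difference is the independence step. The paper builds, for each $j$, a surrogate $\ftil_{j,c}(x_{0,j})$ from the errors outside every other coordinate's window, which is exactly independent of $\{\fhat^{or}_{l,c,\gamma}(x_{0,l})\}_{l\neq j}$ at each $n$. It then bounds $\E|\Delta^{or}_{n,c,j}-\tilde\Delta_{n,c,j}|=O(n^{-\kappa_j/2})$ by Doob's inequality and telescopes characteristic functions. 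Because that bound divides the removed noise by the fitted block size, the paper also needs the lower-width localization $\rhat_j\wedge\ellhat_j\ge c^{-\gamma}$ of Proposition~\ref{small-localization-prop}. You instead obtain joint weak convergence of all localized partial-sum processes to independent limits. The cross-correlations are at most of order $\sqrt{(N_j/n_j)(N_l/n_l)}\to0$, with $N_l$ the number of grid points of $\calX_l$ in the $l$th window, so the multivariate Lindeberg CLT plus marginal tightness suffice. Your independent-part-plus-remainder decomposition gives the same conclusion without invoking Gaussianity. You then apply the switch in all coordinates at once, so the joint distribution function at $(t_1,\ldots,t_d)$ becomes an orthant probability for Lipschitz functionals whose limits are independent with continuous laws. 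This is the $d$-coordinate analogue of the simultaneous switch the paper uses for Theorem~\ref{inference-thm}. Your route needs neither surrogate estimators nor lower-width localization, and it gets marginals and independence from a single process-level limit. Similarly, localizing the two switched one-sided extremes for fixed $t$, rather than the saddle pair $(\rhat_1,\ellhat_1)$, makes both the finite-$n$ and the limiting localization routine quadratic-drift arguments. The paper's route, by contrast, gives an explicit finite-$n$ coupling rate. It is also modular: it transfers independence to any marginal limit theorem without revisiting joint process convergence.
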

The above result establishes weak convergence of the vector of least squares estimators for the additive monotone model. The marginal limiting distribution follows by relating the LSE of each component to the LSE in a univariate isotonic regression problem. We establish asymptotic joint independence using characteristic functions, by controlling the dependence among the componentwise estimators.
\begin{remark}
    We do not have a closed form for the limiting distribution $\mathbb{D}_{\beta, \alpha}$ when $\beta = 1/3$. In Section \ref{simulation-sec}, we simulate the distribution by approximating the two-sided random walk over $(-\infty, \infty)$ by a two-sided random walk on finite intervals. It is an interesting question to see whether the technique in \citet{groeneboom1989brownian} can be applied to find the distribution analytically.
\end{remark}
\begin{remark}\label{remark-random-design}
    Under a random design, the LSE for each component does not decouple from the others as cleanly as it does under a fixed lattice design. Consequently, one must control the contribution of the other components, and the arguments for fixed lattice designs do not carry over directly. While \citet{mammen2007additive} established an ``oracle property'' for random designs, their proof relies on the claim that the LSEs in additive monotone models are uniformly bounded with high probability (see Lemma 3 of \citet{mammen2007additive}). To the best of our knowledge, a complete proof of this claim is still unavailable.
\end{remark}

\section{Statistical inference}\label{inference-section}
In this section, we construct confidence intervals for $f_j^*(x_{0,j})$. By Theorem~\ref{theorem-lattice}, when $\beta_j \ge 1/3$, the limiting distribution of $w_{n,j}^{-1}(\fhat_j(x_{0,j}) - f_j^*(x_{0,j}))$ depends on the unknown derivative $f_j^{*\prime}(x_{0,j})$, whereas for $\beta_j<1/3$ the limiting distribution is Gaussian and depends only on $\sigma$. Since estimating $f_j^{*\prime}(x_{0,j})$ typically requires tuning parameters, and a key appeal of LSEs under shape constraints is their tuning-free nature, we develop a pivotal limiting distribution that yields tuning-free confidence intervals. Building on \citet{deng2021confidence}, we obtain a pivotal limit for $\beta_j>1/3$ using the number of design points in the strip identified by the LSE. For $\beta_j=1/3$, we show that the normalization by $\sqrt{n_{\widehat r_j,\widehat\ell_j}^j}$ is not pivotal (Proposition~\ref{failure-inference}).

\subsection{Pivotal limiting distribution theory}
We first state our pivotal limiting distribution for $\beta_j > 1/3$. Recall that $(\widehat r_j,\widehat\ell_j)$ are the scaled widths of the fitted interval in \eqref{eqn-kink}. For arbitrary scaled right and left widths $r\ge0$ and $\ell>0$, define the number of observations in the corresponding half-open interval by
\begin{align}\label{n_u_v-def}
    n_{r,\ell}^j
    =\sum_{i=1}^n
      \ind\{x_{i,j}\in(x_{0,j}-\ell w_{n,j},x_{0,j}+rw_{n,j}]\}.
\end{align}
Let $W(h)$ be a two-sided Brownian motion starting at zero, then the pivotal distribution is defined by
\begin{align*}
    \L_j &\equiv \sqrt{g_{j}^* + h_j^*}\,\min_{g \ge 0}\max_{h >0}\left\{\f{W(g)- W(-h)}{g + h}+ g-h\right\},
\end{align*}
where  $g_j^*$ and $h_j^*$ are defined as the minimizer and maximizer of the above min-max operation and are almost surely unique  \citep{kim1990cube, deng2021confidence}, i.e.
\begin{align}
    \min_{g \ge 0}\max_{h >0}\left\{\f{W(g)- W(-h)}{g + h}+ g-h\right\} = \f{W(g_{j}^*)- W(-h_j^*)}{g_{j}^* + h_j^*}+ g_{j}^*-h_j^*.
\end{align}

\begin{theorem}\label{inference-thm}
    Under the same assumptions as in Theorem \ref{theorem-lattice}, with $n_{r,\ell}^j$ and $\L_j$ defined above, for $\beta_j>1/3$ we have,
    \begin{align*}
        \sqrt{n_{\widehat r_j,\widehat\ell_j}^j}(\fhat_j(x_{0,j}) - f_j^*(x_{0,j})) \rightsquigarrow \sigma \L_j,
    \end{align*}
\end{theorem}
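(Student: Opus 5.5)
Via Lemma~3.1 of \citet{guntuboyina2018nonparametric}, $\fhat_j(x_{0,j})=\fhat_j^{or}(x_{0,j})-\Ybar$, and $\Ybar-\mu^*=\vepsbar=O_p(n^{-1/2})$. For $\beta_j>1/3$ we have $w_{n,j}=n^{-1/3}$. Once we show $n^j_{\widehat r_j,\widehat\ell_j}=O_p(n^{2/3})$, the shift contributes $\sqrt{n^j_{\widehat r_j,\widehat\ell_j}}\,O_p(n^{-1/2})=o_p(1)$. It therefore suffices to study the oracle univariate isotonic fit on the grid $\calX_j$. Averaging the responses over the other coordinates gives observations $\Ybar_k = \mu^*+f_j^*(k/n_j)+\vepsbar_k$ for $k=1,\ldots,n_j$, where the $\vepsbar_k$ are independent, have variance $\sigma^2/n^{1-\beta_j}$, and are each an average of $n^{1-\beta_j}\to\infty$ i.i.d.\ errors. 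In these coordinates the min--max formula \eqref{eqn-kink} is the usual block-average formula, with $n^j_{r,\ell}=n^{1-\beta_j}\cdot\#\{k: k/n_j\in(x_{0,j}-\ell w_{n,j},x_{0,j}+rw_{n,j}]\}$.

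Next, localize and rescale. Set $x=x_{0,j}+tn^{-1/3}$. Each unit of $t$ contains $n^{\beta_j-1/3}\to\infty$ grid points, so $n^j_{r,\ell}=n^{2/3}(r+\ell)(1+o(1))$, uniformly on compacts, with an $O(n^{1-\beta_j})$ rounding error that is negligible. Define the rescaled partial-sum process
\[
\mathbb{W}_n(t)=n^{-1/3}\sum_{i:\,x_{i,j}\in(x_{0,j},\,x_{0,j}+tn^{-1/3}]}\veps_i ,
\]
with the obvious sign convention for $t<0$. By Donsker's theorem for the triangular array (Lindeberg holds because the variances are finite) it converges to $\sigma W$ on compacts. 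Assumption~\ref{smoothness-assumption} gives a drift term $\alpha(r-\ell)/2+o(1)$ with $\alpha=f_j^{*\prime}(x_{0,j})$. The rescaled estimator error and the rescaled widths then become
\[
n^{1/3}(\fhat_j-f_j^*)(x_{0,j}) = \min_{r\ge0}\max_{\ell>0}\Big\{\frac{\mathbb{W}_n(r)-\mathbb{W}_n(-\ell)}{r+\ell}+\frac{\alpha(r-\ell)}{2}+o_p(1)\Big\},
\]
with $(\widehat r_j,\widehat\ell_j)$ the minimizing and maximizing pair.

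The core step is joint convergence of the triple: the value, $\widehat r_j$ and $\widehat\ell_j$ should converge to the value of the limiting min--max $\min_r\max_\ell\{(\sigma W(r)-\sigma W(-\ell))/(r+\ell)+\alpha(r-\ell)/2\}$ and its a.s.\ unique optimizers. This needs two inputs:
\begin{enumerate}
\item[(a)] tightness, $\widehat r_j,\widehat\ell_j=O_p(1)$;
\item[(b)] an argmin--argmax continuous mapping theorem restricted to compacts $[0,K]\times[K^{-1},K]$.
\end{enumerate}
For (a), I would follow \citet{deng2021confidence}. On the event $\widehat r_j>K$, the block average over $[\vhat_j,\uhat_j]$ must exceed the truth by order $K$ because of the drift. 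A maximal inequality for the partial sums, together with a peeling argument over $K2^m$, makes this probability small. The event $\widehat\ell_j$ near $0$ or large is handled in the same way, and Proposition~\ref{rate-cvgc} supplies the rate. For (b), uniqueness of the optimizers, from \citet{kim1990cube,deng2021confidence}, together with continuity of the criterion away from $r+\ell=0$ should suffice. An alternative route is Lemma~\ref{lm-switch}, applied jointly to $\{\fhat\le a,\ \widehat r_j\le r_0,\ \widehat\ell_j\le \ell_0\}$. I expect this step, the joint convergence including the argmax locations with the $\ell\downarrow0$ boundary handled, to be the main obstacle.

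Finally, combine. Since $\sqrt{n^j_{\widehat r_j,\widehat\ell_j}}=n^{1/3}\sqrt{\widehat r_j+\widehat\ell_j}\,(1+o_p(1))$, the continuous mapping theorem gives the limit $\sqrt{r^*+\ell^*}\cdot V_\alpha$, where $V_\alpha$ is the limiting min--max value and $(r^*,\ell^*)$ its optimizers. Brownian scaling removes $\alpha$ and $\sigma$. Set $r=cg$ and $\ell=ch$ with $c=(2\sigma/\alpha)^{2/3}$, and use $W(cg)\stackrel{d}{=}\sqrt c\,W(g)$. The criterion becomes $\sigma c^{-1/2}\{(W(g)-W(-h))/(g+h)+g-h\}$, since $\alpha c/2=\sigma c^{-1/2}$. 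The optimizers scale as $r^*+\ell^*=c(g_j^*+h_j^*)$. The product is therefore
\[
\sqrt{c(g_j^*+h_j^*)}\;\sigma c^{-1/2}\,(\cdots)=\sigma\L_j,
\]
which is the claim.
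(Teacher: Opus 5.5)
Your outline is correct and has the same architecture as the paper's proof. Both reduce to the oracle fit via $\fhat_j=\fhat_j^{or}-\Ybar$, rescale to a two-sided process converging to $\sigma W(s)+\tfrac{a}{2}s^2$, prove joint convergence of the value and the saddle widths, use the block count $n^j_{\rhat,\ellhat}=(\rhat+\ellhat)n^{2/3}$, and finish with the same Brownian rescaling. The count is exact, with no rounding, because the fitted endpoints are knots; your $c$ is the paper's $1/s_1$.

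The genuine difference is the joint-convergence step. Your primary route is an argmin--argmax continuous mapping theorem on $[0,K]\times[K^{-1},K]$. This is the standard M-estimation argument, but it needs two inputs you only gesture at:
\begin{itemize}
\item A lower bound $\ellhat\ge K^{-1}$ with high probability. This is \emph{not} obtained the same way as the upper bound. Drift plus a maximal inequality bounds the noise from above. Excluding tiny widths instead needs a lower bound on the supremum of the noise over short windows, i.e.\ the reflection-principle argument of Proposition~\ref{small-localization-prop}.
\item Uniqueness of the optimizers of the truncated limit problem on the event that $(r^*,\ell^*)$ lies in the compact. This reduces to the fact that the GCM segment through the origin touches $Q$ only at its endpoints.
\end{itemize}

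The paper avoids both inputs.
\begin{itemize}
\item Lemma~\ref{lm-switch} turns each truncated min--max value $\mathcal V_n(r_-,r_+;\ell_-,\ell_+)$ into an event $\{\Gamma_{n,I,J}(t)\le0\}$. Here $\Gamma_{n,I,J}$ is a difference of two one-sided infima, so it is Lipschitz in $Q_n$ and has no singularity at $r+\ell=0$. Independence of the two halves of $W$ then gives the continuity sets.
\item Location events are encoded through values: $\{\rhat\le r_0\}\cap H_c\subseteq\{\mathcal V_n(0,r_0;0,c)=T_{n,c}\}$. Equality events are not continuity sets, so the matching lower bound uses the $\epsilon$-shifted open events $O_{n,c}$.
\item As a result, only the upper localization of Proposition~\ref{large-localization-prop} is needed.
\end{itemize}

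If you instead pursue your alternative via Lemma~\ref{lm-switch}, you would need exactly this ingredient. That is, you must encode $\{\rhat\le r_0\}$ and $\{\ellhat\le\ell_0\}$ through truncated min--max values and add the $\epsilon$-sandwich. Your proposal does not yet supply this.
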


The above pivotal limiting distribution theory implies the tuning-free confidence interval in \eqref{confidence-interval-def}.
\begin{corollary}
    Let $c_{\delta}$ be a continuity point of the distribution function of $|\L_j|$ such that $\P(|\L_j| > c_{\delta}) = \delta$ and $\hat{\sigma}^2$ be a consistent estimator of $\sigma^2$. Then the CI defined in \eqref{confidence-interval-def} satisfies
    \begin{align*}
        \lim_{n \to \infty} \P\big(f_j^*(x_{0,j}) \in \mathcal{I}_n(x_{0,j};c_{\delta}, j)\big) = 1-\delta.
    \end{align*}
\end{corollary}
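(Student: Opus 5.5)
This follows from Theorem~\ref{inference-thm} by Slutsky's lemma and the continuous mapping theorem. Write
\[
T_n := \f{\sqrt{n_{\widehat r_j,\widehat\ell_j}^j}\,\big(\fhat_j(x_{0,j}) - f_j^*(x_{0,j})\big)}{\hat\sigma}.
\]
By the definition \eqref{confidence-interval-def}, the event $f_j^*(x_{0,j}) \in \mathcal{I}_n(x_{0,j};c_\delta,j)$ is exactly the event $|T_n| \le c_\delta$. Note that $n_{\widehat r_j,\widehat\ell_j}^j \ge 1$ always, since the fitted interval contains $x_{0,j}$. On the event $\hat\sigma>0$ the identity is exact. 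Its probability tends to one, because $\hat\sigma^2 \to \sigma^2>0$ in probability; the complement therefore contributes $o(1)$ and can be ignored.

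The first step is to apply Theorem~\ref{inference-thm}, which gives that the numerator converges weakly to $\sigma\L_j$ when $\beta_j>1/3$. Since $\hat\sigma^2$ is consistent, the continuous mapping theorem gives $\hat\sigma \to \sigma$ in probability. Slutsky's lemma then yields $T_n \rightsquigarrow \L_j$, and applying the continuous map $x\mapsto|x|$ gives $|T_n| \rightsquigarrow |\L_j|$.

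Next we use the assumption that $c_\delta$ is a continuity point of the distribution function of $|\L_j|$. By the portmanteau theorem,
\[
\P(|T_n|\le c_\delta) \to \P(|\L_j|\le c_\delta) = 1-\P(|\L_j|>c_\delta) = 1-\delta .
\]
Combining this with the identity between the two events gives the claim.

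There is no real obstacle; all the difficulty lies in Theorem~\ref{inference-thm}. The only points needing care are that $\sigma>0$, so that division by $\hat\sigma$ is asymptotically harmless, and that the continuity-point assumption is what licenses the portmanteau step. Atomlessness of $|\L_j|$ is not needed beyond that assumption.
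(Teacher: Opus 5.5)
Your argument is correct and matches what the paper intends. The paper gives no separate proof and only states that the corollary follows from Theorem~\ref{inference-thm}; your Slutsky-plus-portmanteau step is exactly that routine deduction, including the harmless handling of $\{\hat\sigma=0\}$ and the implicit restriction to $\beta_j>1/3$.
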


\begin{remark}\label{remark-sigma-hat}
    A simple consistent estimator of $\sigma^2$ that requires no tuning parameters is available from the residuals of the LSE: since the fitted regression function $\muhat + \sum_{j=1}^d \fhat_j$ is consistent in the empirical $L_2$ norm \citep{guntuboyina2018nonparametric}, the estimator
    \[
        \hat{\sigma}^2 = \f{1}{n}\sum_{i=1}^n \Big(Y_i - \muhat - \sum_{j=1}^d \fhat_j(x_{i,j})\Big)^2
    \]
    satisfies $\hat{\sigma}^2 \stackrel{p}{\to} \sigma^2$. Alternatively, difference-based estimators along any single coordinate of the lattice \citep{rice1984bandwidth} can be used. Either choice preserves the tuning-free nature of the procedure.
\end{remark}

The main idea is that $n_{\widehat r_j,\widehat\ell_j}^j$ contains information about $f_j^{*\prime}(x_{0,j})$, and pivotality follows from joint weak convergence of the estimator and the scaled block widths. At the critical resolution $\beta_j=1/3$, normalization by $\sqrt{n_{\widehat r_j,\widehat\ell_j}^j}$ has a non-pivotal limit. For $a>0$, recall $\mathbb{G}_a$ from \eqref{gcm-rw-def}. Let $r_a^*$ and $\ell_a^*$ denote its almost surely unique minimizing and maximizing arguments, respectively.

\begin{proposition}\label{failure-inference}
    Under the same assumptions as in Theorem \ref{theorem-lattice}, let
    $a_j=f_j^{*\prime}(x_{0,j})$ and suppose $\beta_j=1/3$. Then
    \begin{align*}
        \sqrt{n_{\widehat r_j,\widehat\ell_j}^j}
        \left(\fhat_j(x_{0,j})-f_j^*(x_{0,j})\right)
        \rightsquigarrow
        \L_j^{\mathrm{crit}}(a_j)
        :=
        \sqrt{r_{a_j}^*+\ell_{a_j}^*}\,\mathbb{G}_{a_j}.
    \end{align*}
    The distribution of $\L_j^{\mathrm{crit}}(a_j)/\sigma$ depends on $a_j$, and hence normalization by $\sqrt{n_{\widehat r_j,\widehat\ell_j}^j}$ is not pivotal at $\beta_j=1/3$.
\end{proposition}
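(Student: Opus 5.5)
We argue as for Theorem~\ref{theorem-lattice} in the critical regime, but track the fitted block together with the value of the estimator. By Lemma~3.1 of \citet{guntuboyina2018nonparametric}, $\fhat_j=\fhat_j^{or}-\Ybar$, and the oracle fit $\fhat_j^{or}$ is a univariate isotonic regression on the grid $\mathcal{X}_j$ with $n_j=n^{1/3}$ points. Its responses are the strip averages $\Ybar_k^j$, which have variance $\sigma^2 n^{-2/3}$. When $\beta_j=1/3$ we have $w_{n,j}=n^{-1/3}=n_j^{-1}$, so the scaled widths $\widehat r_j,\widehat\ell_j$ are integers. The block contains $\widehat r_j+\widehat\ell_j$ grid points, so
\[
n_{\widehat r_j,\widehat\ell_j}^j=(\widehat r_j+\widehat\ell_j)\,n^{2/3}.
\]
Writing $x_{0,j}=k_0/n_j$ and rescaling,
\[
n^{1/3}\bigl(\Ybar_{k_0+i}^j-f_j^*(x_{0,j})-\Ybar\bigr)=\sigma Z_i^{(n)}+a_j\, i+o(1)\quad\text{locally in } i,
\]
where the $Z_i^{(n)}$ are i.i.d.\ with mean zero and unit variance, by Assumption~\ref{smoothness-assumption}. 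The min--max formula \eqref{eqn-kink} then becomes
\[
n^{1/3}\bigl(\fhat_j(x_{0,j})-f_j^*(x_{0,j})\bigr)=\min_{r\ge 0}\max_{\ell\ge 1}\Phi_n(r,\ell),
\]
where $\Phi_n$ is the discrete functional in \eqref{gcm-rw-def} with $S$ replaced by the partial sums of the $Z_i^{(n)}$ plus a $o_p(1)$ term. The drift $\sum_{i=-\ell+1}^{r} a_j i/(r+\ell)$ equals $a_j(r-\ell+1)/2$, which matches \eqref{gcm-rw-def} after the index convention used there. The $\Ybar$ shift is $O_p(n^{-1/2})=o_p(n^{-1/3})$.

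The key step is joint convergence
\[
\bigl(n^{1/3}(\fhat_j(x_{0,j})-f_j^*(x_{0,j})),\,\widehat r_j,\,\widehat\ell_j\bigr)\rightsquigarrow\bigl(\mathbb{G}_{a_j},\,r_{a_j}^*,\,\ell_{a_j}^*\bigr).
\]
Given this, $\sqrt{n^j_{\widehat r_j,\widehat\ell_j}}(\fhat_j-f_j^*)=\sqrt{\widehat r_j+\widehat\ell_j}\,n^{1/3}(\fhat_j-f_j^*)$, and the continuous mapping theorem yields $\L_j^{\mathrm{crit}}(a_j)$.

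I would prove the joint convergence in three steps.
\begin{enumerate}
\item \emph{Tightness.} Show that $\widehat r_j,\widehat\ell_j=O_p(1)$. This is the localization used in Proposition~\ref{rate-cvgc}: a block extending $K$ points to the right forces a drift of order $a_jK$, which the partial sums exceed only with probability tending to zero as $K\to\infty$, by Doob/Kolmogorov maximal inequalities.
\item \emph{Reduction to finitely many indices.} On the event $\{\widehat r_j\le K,\widehat\ell_j\le K\}$, the triple is a measurable function of finitely many partial sums $(S^{(n)}(i))_{|i|\le K'}$ for suitable $K'$. Since the index set is discrete, argmin and argmax are locally constant functions of these sums off a null set of ties.
\item \emph{Passage to the limit.} Apply the multivariate CLT to the finitely many strip averages, each an average of $n^{2/3}$ i.i.d.\ errors, to get convergence to Gaussian partial sums. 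Then let $K\to\infty$, using the same tightness in the limiting random-walk problem. This also gives almost sure uniqueness of $r^*_a,\ell^*_a$: ties have probability zero because the Gaussian increments have a continuous joint law.
\end{enumerate}
Switching or using Lemma~\ref{lm-switch} is not needed beyond tightness, since everything is discrete.

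For non-pivotality, it suffices to exhibit two values of $a$ for which $\L^{\mathrm{crit}}(a)/\sigma$ has different laws. I would use the limits $a\to\infty$ and $a\to 0$.
\begin{itemize}
\item As $a\to\infty$, with $\sigma$ fixed, the drift dominates and forces $r^*_a=0$ and $\ell^*_a=1$ with probability tending to one. Then $\mathbb{G}_a=\sigma Z_0$-type single-point value, namely $\sigma(S(0)-S(-1))$, which is exactly $N(0,\sigma^2)$ with no drift contribution at $r=0,\ell=1$. Hence $\L^{\mathrm{crit}}(a)/\sigma\rightsquigarrow N(0,1)$.
\item As $a\to 0$, Brownian rescaling of the random walk (Donsker) shows that $\L^{\mathrm{crit}}(a)/\sigma\rightsquigarrow\L_j$, the pivotal law of Theorem~\ref{inference-thm}, by the scaling invariance of that law. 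This law is not standard normal, as established in \citet{deng2021confidence}. For a cheaper alternative, I would compare second moments or tail behaviour numerically/analytically.
\end{itemize}
Since the family of laws is continuous in $a$ and has distinct limits at the two ends, it is not constant.

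The main obstacle is making the $a\to0$ limit rigorous: it requires uniform tightness of $(r_a^*,\ell_a^*)$ on the scale $a^{-2/3}$ and an invariance principle for the discrete min--max. If this proves delicate, I would instead show directly that $P(r_a^*=0,\ell_a^*=1)\to1$ as $a\to\infty$ but stays bounded away from $1$ at some fixed $a$. I would then use that on the complementary event the law acquires a drift-dependent non-Gaussian component, and compare a specific moment or quantile.
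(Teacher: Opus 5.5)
Your proposal is correct and follows essentially the paper's route. You prove joint convergence of $\bigl(n^{1/3}(\fhat_j-f_j^*)(x_{0,j}),\widehat r_j,\widehat\ell_j\bigr)$ through localization (this is Proposition~\ref{large-localization-prop}), a finite-dimensional CLT with continuous mapping on tie-free finite arrays, and delocalization, and you then show non-pivotality by contrasting the Gaussian limit as $a\to\infty$ (saddle pair $(0,1)$) with the non-Gaussian Brownian pivot $\L_j$ as $a\downarrow0$.

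The only differences are minor. For Gaussian increments the paper uses the exact embedding $(\sqrt{\lambda}S(k))_{k}\stackrel{d}{=}(W(\lambda k))_{k}$ instead of Donsker. Your appeal to continuity of the laws in $a$ is not needed, because distinct limits at the two ends already rule out a law that is constant in $a$.
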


\begin{remark}[A tuning-free interval at $\beta_j=1/3$ outside the LSE framework]\label{marginal-remark}
    Although Proposition~\ref{failure-inference} shows that normalization by $\sqrt{n_{\widehat r_j,\widehat\ell_j}^j}$ is not pivotal at $\beta_j=1/3$, a tuning-free interval is still available if one steps outside the isotonic LSE. Define the marginal strip average along the $j$th coordinate by
    \begin{align*}
        \Ybar^{(j)}_{x_{0,j}} = \f{1}{n_{-j}}\sum_{i=1}^n Y_i\,\ind\{x_{i,j}=x_{0,j}\},
        \qquad n_{-j}:=\sum_{i=1}^n \ind\{x_{i,j}=x_{0,j}\}=\prod_{k\neq j}n_k.
    \end{align*}
    By the identifiability constraints and the product structure of the design,
    \[
        \widetilde Y^{(j)}_{x_{0,j}}
        :=\Ybar^{(j)}_{x_{0,j}}-\Ybar
        =f_j^*(x_{0,j})+\vepsbar^{(j)}_{x_{0,j}}-\bar\veps,
        \qquad
        \bar\veps:=\f{1}{n}\sum_{i=1}^n\veps_i.
    \]
    Moreover, $\sqrt{n_{-j}}\bar\veps=O_p(n_j^{-1/2})=o_p(1)$. The central limit theorem and Slutsky's theorem therefore give
    \begin{align*}
        \sqrt{n_{-j}}\big(\widetilde Y^{(j)}_{x_{0,j}}-f_j^*(x_{0,j})\big)\rightsquigarrow N(0,\sigma^2).
    \end{align*}
    Since $n_{-j}$ is observed and, at $\beta_j=1/3$, $\sqrt{n_{-j}}=w_{n,j}^{-1}=n^{1/3}$, the interval
    \[
        \left[\widetilde Y^{(j)}_{x_{0,j}}
        \mathbin{\pm}z_{1-\delta/2}\hat\sigma/\sqrt{n_{-j}}\right]
    \]
    has asymptotic coverage $1-\delta$, requires no tuning parameter, and has the same $n^{-1/3}$ rate as the LSE. This interval uses a single marginal strip and does not exploit pooling induced by monotonicity. It may therefore be less efficient than the LSE, which pools an $O_p(1)$ number of neighboring strips in the critical regime. The marginal interval is rate-optimal only when $\beta_j\le1/3$. When $\beta_j>1/3$, its half-width has order $n^{-(1-\beta_j)/2}\gg n^{-1/3}$. A procedure that is valid and rate-optimal without requiring the asymptotic regime to be specified in advance remains open.
\end{remark}

\section{Numerical examples}\label{simulation-sec}
In this section, we examine our theory in Section \ref{limit-them-section} and Section \ref{inference-section} by simulations. Limiting distributions for $w_{n,1}^{-1}(\fhat_1-f_1^*)(x_{0,1})$ for $\beta_1>1/3$ and $\beta_1=1/3$ in Theorem \ref{theorem-lattice} will be compared to the corresponding empirical distributions. Coverage probabilities of the proposed confidence intervals are verified by simulations. We also compare the length of confidence interval in \eqref{confidence-interval-def} to the length of the oracle confidence interval derived from Theorem \ref{theorem-lattice} assuming the derivative $f_1^{*\prime}(x_{0,1})$ is known. We consider $d=2$ and use a design point nearest to $(0.5,0.5)$ in each lattice. This point equals $(0.5,0.5)$ whenever both coordinate lattice sizes are even. We assume $\sigma$ is known and $\veps_i \stackrel{iid}{\sim} N(0,1)$.
\subsection{Numerical validations for the limiting distribution theory}
We simulate the empirical distributions for the LSEs of the additive monotone model and compare them with the theoretical limiting distributions obtained in Section \ref{limit-them-section}.

We consider the balanced design with $n_1=n_2$ for $\beta_1>1/3$. We first simulate $Y_i=f_1^*(x_{i,1})+f_2^*(x_{i,2})+\veps_i$ for $i=1, \ldots, n_1n_2$ with $f_1^*(x_1) = x_1^2 - n_1^{-1}\sum_{i=1}^{n_1} x_{i,1}^2$ and $f_2^*(x_2) = x_2 - n_2^{-1}\sum_{i=1}^{n_2} x_{i,2}$ (the centering terms enforce the identifiability constraint $\sum_i f_j^*(x_{i,j})=0$), and then calculate the statistic $n^{1/3}(\fhat_1-f_1^*)(x_{0,1})$. Next, we repeat the procedure 1000 times and plot the empirical cumulative distribution function (CDF) of $n^{1/3}(\fhat_1-f_1^*)(x_{0,1})$ in Figure \ref{ecdf-f1} under different lattice sizes. It can be seen from the figure that even for the small lattice size $25\times 25$, the empirical distribution is close to the theoretical limiting distribution.  Since $f_1^{*\prime}(x_{0,1}) = f_2^{*\prime}(x_{0,2})$, Theorem \ref{theorem-lattice} implies that $n^{1/3}(\fhat_1-f_1^*)(x_{0,1})$ and $n^{1/3}(\fhat_2-f_2^*)(x_{0,2})$ have the same limiting distribution. This is empirically confirmed by the QQ-plot of the two statistics in Figure~\ref{qq-plot}, computed with lattice size $50\times 50$.

When $\beta_1=1/3$, we use a Monte Carlo method to approximate the limiting distribution $\mathbb{G}_{1}$ defined in \eqref{gcm-rw-def}.  We sample independent standard normal random variables $\{Z_i\}_{0<|i|\le m}$ and then formulate the truncated two-sided random walk, following the convention in \eqref{gcm-rw-def}:
\begin{align*}
    S_m(r) = \ind_{\{r > 0\}}\sum_{j=1}^{r} Z_j -\ind_{\{r<0\}}\sum_{j=1}^{-r} Z_{-j},
\end{align*}
where $r \in \Z$, $m = 10^3$ and $S_m(0)=0$. Next, we calculate the approximated quantity $\widetilde{\mathbb{G}}_{1}$ according to the following formula:
\begin{align*}
    \widetilde{\mathbb{G}}_{1} = \inf_{r \in [0,m] \cap \Z} \sup_{\ell \in (0, m] \cap \Z} \left\{\f{ S_m(r) - S_m(-\ell)}{r + \ell}+ \f{r-\ell+1}{2}\right\},
\end{align*}
and repeat the process $10^6$ times. In Figure \ref{ecdf-critical}, we compare the empirical CDF of the approximated limiting distribution $\widetilde{\mathbb{G}}_{1}$ and the empirical CDF of $n^{1/3}(\fhat_1 - f_1^*)(x_{0,1})$ with lattice sizes $n_1 = 30, 60, 100$ and $n_2 = 900, 3600, 10000$.
\begin{figure}[htbp]
    \centering
    \begin{subfigure}[t]{0.32\textwidth}
        \centering
        \includegraphics[width=\linewidth, height=3.8cm, keepaspectratio]{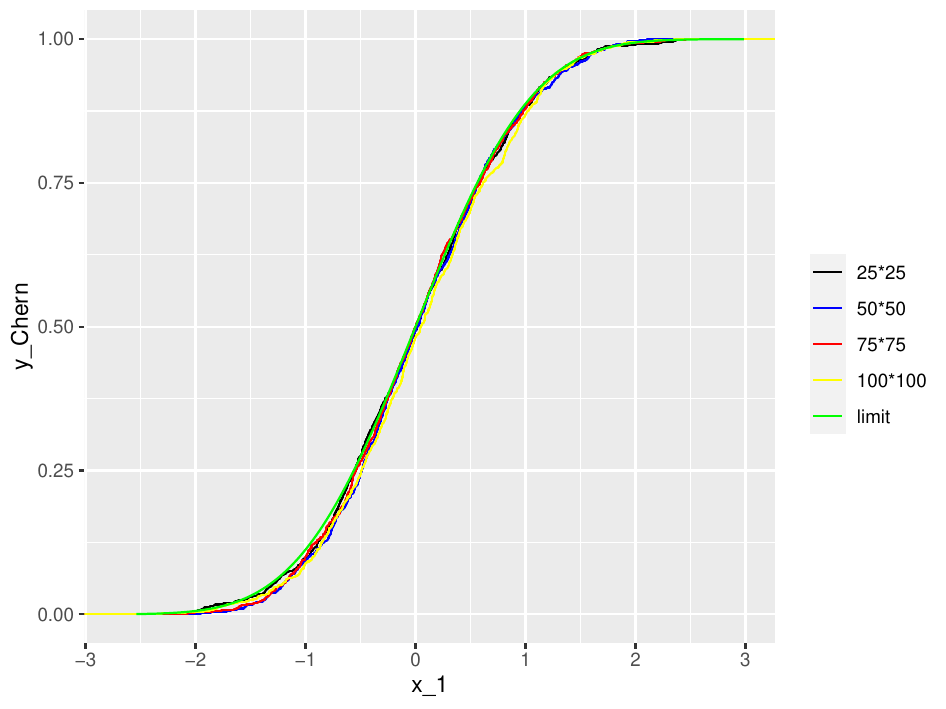}
        \caption{ECDF of $n^{1/3}(\fhat_1-f_1^*)(x_{0,1})$ under different lattice sizes.}
        \label{ecdf-f1}
    \end{subfigure}
    \hfill
    \begin{subfigure}[t]{0.32\textwidth}
        \centering
        \includegraphics[width=\linewidth, height=3.8cm, keepaspectratio]{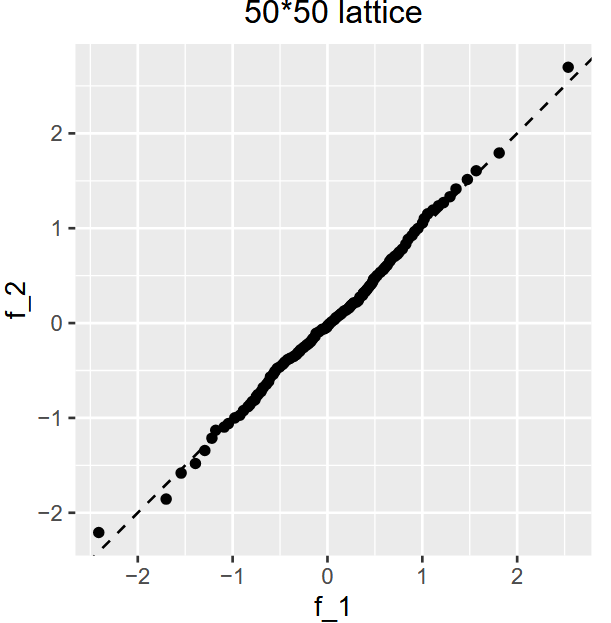}
        \caption{QQ-plot of $n^{1/3}(\fhat_1-f_1^*)(x_{0,1})$ versus $n^{1/3}(\fhat_2-f_2^*)(x_{0,2})$, lattice size $50\times 50$.}
        \label{qq-plot}
    \end{subfigure}
    \hfill
    \begin{subfigure}[t]{0.32\textwidth}
        \centering
        \includegraphics[width=\linewidth, height=3.8cm, keepaspectratio]{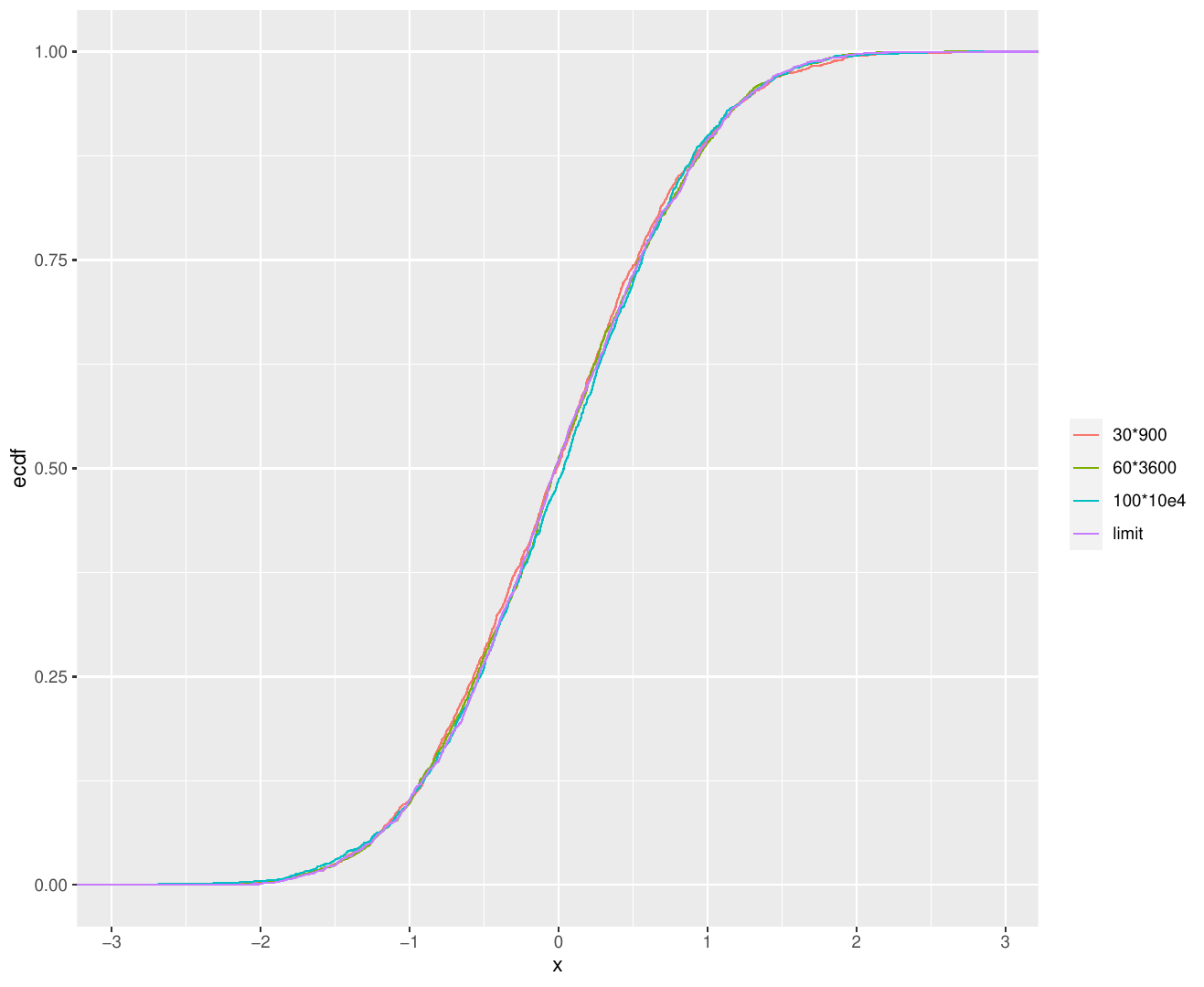}
        \caption{ECDF of $n^{1/3}(\fhat_1-f_1^*)(x_{0,1})$ in the critical regime $\beta_1=1/3$ versus the approximated limit $\widetilde{\mathbb{G}}_1$.}
        \label{ecdf-critical}
    \end{subfigure}
    \caption{Empirical validation of the limiting distributions in Theorem \ref{theorem-lattice}, based on 1000 replications.}
    \label{fig:combined-plots}
\end{figure}

\subsection{Simulations for the confidence interval}
In this subsection, we numerically verify our pivotal limiting distribution theory by plotting the coverage probabilities for each design point and the lengths of confidence intervals for $f_1^*(x_{0,1})$. Let $N$ be the total number of repetitions. For each $b=1,\ldots, N$, construct the confidence interval $\mathcal{I}_n^{(b)}(x;c_{\delta}, 1)$ for each $x\in \calX_1$, where $\mathcal{I}_n^{(b)}(x;c_{\delta}, 1)$ denotes the interval \eqref{confidence-interval-def} for the first coordinate computed from the $b$th replication. The coverage probability at design point $x$ is defined as $N^{-1}\sum_{b=1}^N \ind\{ f_1^*(x) \in  \mathcal{I}_n^{(b)}(x;c_{\delta}, 1)\}$. We focus on 95\% confidence intervals, i.e., $\delta=0.05$, and $c_{\delta}=2.11$ is chosen from Table 4 in \citet{deng2021confidence}.

We simulate $Y_i=f_1^*(x_{i,1})+f_2^*(x_{i,2})+\veps_i$ with the lattice size $100\times 100$, $f_1^*(x_1) = x_1^2 - n_1^{-1}\sum_{i=1}^{n_1} x_{i,1}^2$ and $f_2^*(x_2) = x_2 - n_2^{-1}\sum_{i=1}^{n_2} x_{i,2}$. Next, we calculate the coverage probabilities by repeating the above procedure 1000 times. In Figure \ref{coverage-plot}, we plot the coverage probabilities of $f_1^*(x)$ at all design points. Under the same setting, Figure \ref{box-plot} shows the boxplots for the lengths of the proposed confidence intervals (CIs) under different sample sizes, and the lengths of the oracle CIs are shown as red lines, computed using the true derivative $f_1^{*\prime}(x_{0,1})$. The coverage probabilities are close to $0.95$ for design points near $0.5$, with somewhat larger coverage errors at design points near the boundary of $[0,1]$. The length of the proposed confidence interval is close to the oracle length.

\begin{figure}[htbp]
    \centering
    \begin{subfigure}[t]{0.32\textwidth}
        \includegraphics[width=\linewidth, height=3.8cm, keepaspectratio]{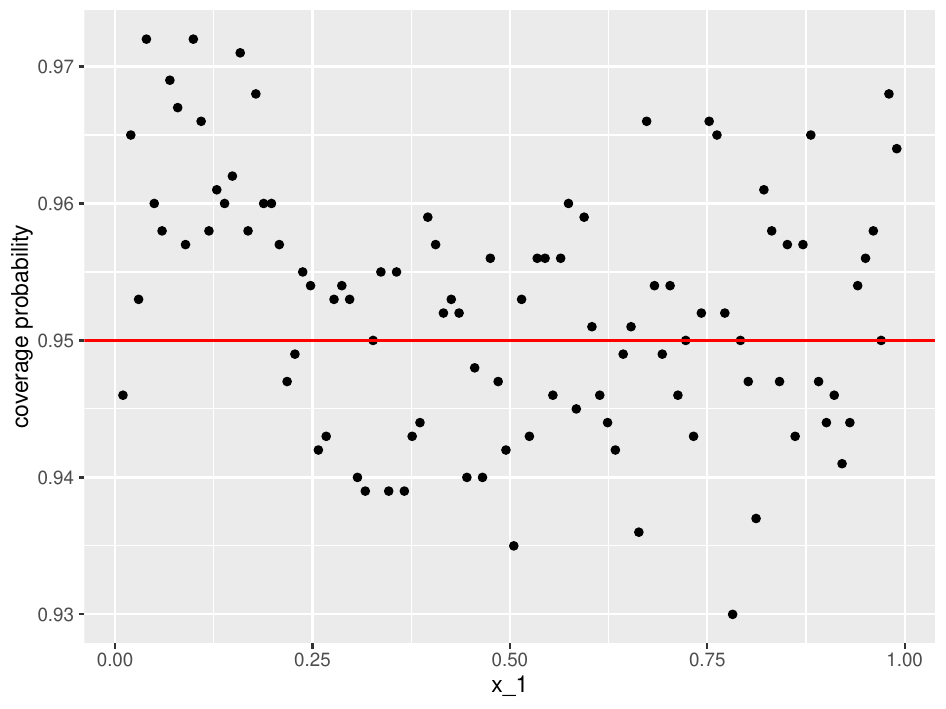}
        \caption{Coverage probabilities of the 95\% CIs for $f_1^*(x)$ at each design point $x \in \calX_1$, lattice size $100\times 100$.}
        \label{coverage-plot}
    \end{subfigure}
    \hfill
    \begin{subfigure}[t]{0.32\textwidth}
        \includegraphics[width=\linewidth, height=3.8cm, keepaspectratio]{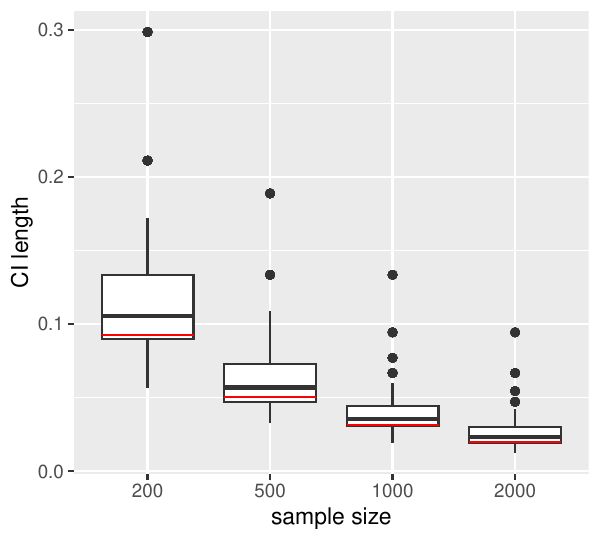}
        \caption{Lengths of the proposed 95\% CIs under different lattice sizes; red lines show the oracle CI lengths based on the true derivative.}
        \label{box-plot}
    \end{subfigure}
    \hfill
    \begin{subfigure}[t]{0.32\textwidth}
        \includegraphics[width=\linewidth, height=3.8cm, keepaspectratio]{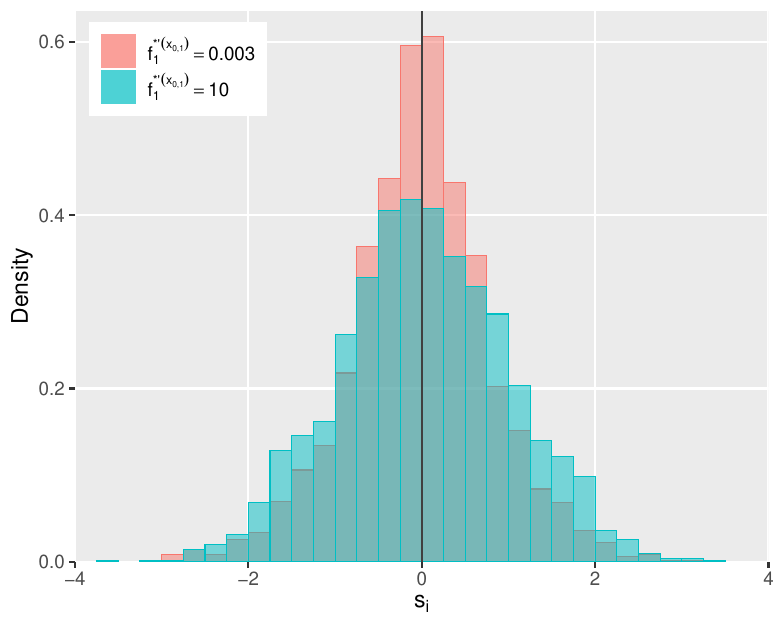}
        \caption{Histograms of the
        $\sqrt{n_{\widehat r_1,\widehat\ell_1}^1}$-normalized statistics
        $s_i$ for derivatives $0.003$ and $10$, shown on a linear density
        scale.}
        \label{histogram-pivotal-compare}
    \end{subfigure}
    \caption{Simulation results for confidence interval coverage and length, together with the comparison of the normalized statistics in the critical regime. Panels (a) and (b) use 1000 replications, and panel (c) uses 2000 replications.}
    \label{fig:combined}
\end{figure}

\subsection{Non-pivotal limiting distribution for \texorpdfstring{$\beta_1=1/3$}{beta1 = 1/3}}\label{simulation-failure}
By Proposition~\ref{failure-inference}, normalization by $\sqrt{n_{\widehat r_1,\widehat\ell_1}^1}$ does not yield a pivotal limit when $\beta_1 = 1/3$. In this subsection, we illustrate numerically that the limiting distribution depends on $f_1^{*\prime}(x_{0,1})$. We simulate from model \eqref{eqn-model} with $d=2$, $n_1 = 200$ and $n_2 = n_1^2 = 40000$, so that $\beta_1 = 1/3$, and with the second component $f_2^*(x_2) = x_2 - n_2^{-1}\sum_{i=1}^{n_2} x_{i,2}$ as in the previous subsections. Let $g_1(x) = 0.003x- n_1^{-1}\sum_{i=1}^{n_1} 0.003x_{i,1}$ and $g_2(x) = 10x - n_1^{-1}\sum_{i=1}^{n_1} 10x_{i,1}$, and let $\fhat_1^{(i)}$ be the LSE corresponding to $f_1^* = g_i$ for $i=1,2$.
We simulate 2000 samples of
$s_i=\sqrt{n_{\widehat r_1,\widehat\ell_1}^1}
\{\fhat_1^{(i)}(x_{0,1})-g_i(x_{0,1})\}$ for $i=1,2$.
Figure~\ref{histogram-pivotal-compare} shows the resulting histograms.
Their different spreads and tail behavior illustrate the dependence on
$f_1^{*\prime}(x_{0,1})$ established in
Proposition~\ref{failure-inference}.

\section{Discussion}\label{discussion-sec}
We establish, to our knowledge, the first limiting distribution theory for the least squares estimators in additive monotone models under general fixed lattice designs, allowing the numbers of design points to differ across coordinates. The main proof device converts the nested min--max characterization into two maxima of independent partial sum processes, yielding the classical isotonic limit, a random-walk limit at the critical resolution, and a Gaussian limit, together with joint asymptotic independence across the additive components. We also construct tuning-free pointwise confidence intervals when $\beta_j>1/3$ and prove that normalization using the square root of the number of observations in the fitted interval is not pivotal at the critical resolution $\beta_j=1/3$. Several directions remain open.

First, our results rely on the Cartesian product structure of the fixed lattice design, which decouples the componentwise estimators via Lemma 3.1 of \citet{guntuboyina2018nonparametric}. Under a random design, this decoupling no longer holds, and existing arguments for the oracle property \citep{mammen2007additive} rely on a uniform boundedness claim for the LSE whose complete proof is, to our knowledge, still unavailable (see Remark~\ref{remark-random-design}). Establishing limiting distributions under random or non-product fixed designs is an interesting open problem.

Second, in the critical regime $\beta_j = 1/3$, Proposition~\ref{failure-inference} shows that normalization by $\sqrt{n_{\widehat r_j,\widehat\ell_j}^j}$ does not produce a pivotal limit for the LSE. The limiting law still depends on the local derivative through the effective lattice spacing. Remark~\ref{marginal-remark} gives a tuning-free interval by averaging a single marginal strip, but this construction does not use pooling induced by monotonicity and may be less efficient than the LSE. It is also rate-optimal only when $\beta_j\le1/3$. Two questions remain open. The first is whether a pivotal interval based on the LSE can be constructed in the critical regime. The second is whether a valid and rate-optimal procedure can be obtained without requiring the asymptotic regime to be specified in advance.

\appendix

\section{Proof of Theorem \ref{theorem-lattice}}\label{sec-proof-lim-them}
Without loss of generality, we set $j=1$ and establish the limiting distribution of the LSE in additive monotone models for $j=1$ across three regimes: (i) $\beta_1 > 1/3$, (ii) $\beta_1 < 1/3$, and (iii) $\beta_1 = 1/3$. We first establish the rate of convergence of the LSE. Next, we derive the asymptotic distribution of a localized version of the estimator, which coincides with the LSE with high probability. Finally, we prove asymptotic joint independence of $w_{n,j}^{-1}\big(\fhat_j(x_{0,j}) - f_j^*(x_{0,j})\big)$ for $j=1,{\ldots},d$.

Since $\fhat_j$ is uniquely defined only at the design points, we extend it by convention to a piecewise constant, left-continuous function. Throughout the proofs, we assume $\mu^* = 0$ without loss of generality: replacing $Y_i$ by $Y_i - \mu^*$ shifts $\muhat$ by $-\mu^*$ and leaves $\fhat_1, \ldots, \fhat_d$ unchanged. We denote the global average of $Y$ by $\Ybar = n^{-1}\sum_{i=1}^n Y_i$. Since $\sum_{i=1}^{n}f_j^*(x_{i,j})=0$ for all $j$, we have $\Ybar= n^{-1}\sum_{i=1}^n\veps_i$. For a design point $x_0 \in \mathcal{X}$, define the local average of $Y$ over the strip $\{x_{i} \in \mathcal{X}: x_{i,j} = x_{0,j}\}$ along the $j$th coordinate as follows:
\begin{align}\label{Ybar-def}
    \Ybar^{(j)}_{x_{0,j}} &= n^{-\sum_{k\neq j}\beta_k}\sum_{i=1}^n Y_{i}\ind\{x_{i,j} = x_{0,j}\}\\
    &= f_j^*(x_{0,j})+\vepsbar_{x_{0,j}}^{(j)},\nonumber
\end{align}
where $\vepsbar_{x_{0,j}}^{(j)} = n^{-\sum_{k\neq j}\beta_k}\sum_{i=1}^n \veps_{i}\ind\{x_{i,j} = x_{0,j}\}$.

Recall from Assumption~\ref{fixed-design-assumption} that
$\calX_j=\{x_{1,j},\ldots,x_{n_j,j}\}\subset[0,1]$ is the set of design
coordinates along the $j$th dimension.
By Lemma 3.1 of \citet{guntuboyina2018nonparametric}, $\fhat_j=\fhat_j^{or}- \Ybar$, where $ \fhat_j^{or}$ is defined as
\begin{align*}
    \fhat_j^{or} \in \argmin_{f_j \in \scrF} \sum_{x \in \calX_j}\left(\Ybar_{x}^{(j)} - f_j(x)\right)^2,
\end{align*}
where the minimization is over the class $\scrF$ of all nondecreasing functions on $[0,1]$ introduced in Section~1. Since $\Ybar = O_p(n^{-1/2})$ and the local rate of convergence of $\fhat_j$ is of larger order than $n^{-1/2}$ (see \eqref{rate-cvgc-def}), it suffices to study the limiting distribution of $\fhat_j^{or}$ to obtain the limiting distribution of $\fhat_j$. In the sequel, we establish the limiting distribution for $w_{n,1}^{-1}(\fhat_1(x_{0,1}) - f_1^*(x_{0,1}))$, and the same proof applies to $w_{n,j}^{-1}(\fhat_j(x_{0,j}) - f_j^*(x_{0,j}))$ for $j \neq 1$. For ease of notation, we drop superscripts by writing $\Ybar_{x_{i,1}}$ as shorthand for $\Ybar_{x_{i,1}}^{(1)}$ in \eqref{Ybar-def} and $\vepsbar_{x_{i,1}}$ as shorthand for $\vepsbar_{x_{i,1}}^{(1)}$ for $x_{i,1} \in \calX_1$.

Define the continuous piecewise linear function $F_{n,1}$ as follows.
\begin{align*}
    F_{n,1}(0):=0 \quad \text{ and } \quad F_{n,1}\big(i/n_1\big) := \f{1}{n_1} \sum_{j=1}^{i}\Ybar_{x_{j,1}}, \quad \text{ for } i=1,{\ldots}, n_1,
\end{align*}
and $F_{n,1}(t)$ as the linear interpolation of $F_{n,1}(i/n_1)$ between the values $F_{n,1}(i/n_1)$ for $i = 1,{\ldots}, n_1$. The least squares estimator can be written as the left-hand slope of the greatest convex minorant of $F_{n,1}$ and hence admits the following min--max representation (see Section~3.3 of \citet{groeneboom2014nonparametric}). For any $w_{n,1}>0$, we have
 \begin{align}
    \fhat_1^{or}(x_{0,1}) &= \min_{r \ge 0}\max_{\ell > 0}\f{F_{n,1}(x_{0,1}+rw_{n,1}) - F_{n,1}(x_{0,1} - \ell w_{n,1})}{(r+\ell)w_{n,1}},\label{f-oracle}\\
    &=\f{F_{n,1}(x_{0,1}+\rhat_1 w_{n,1}) - F_{n,1}(x_{0,1} - \ellhat_1 w_{n,1})}{(\rhat_1+\ellhat_1)w_{n,1}}\nonumber\\
    &= \min_{a \ge x_{0,1}} \max_{b \le x_{0,1}} \f{\sum_{i=1}^{n_1}\Ybar_{x_{i,1}}\ind\{x_{i,1} \in [b, a] \cap \calX_1\}}{\sum_{i=1}^{n_1}\ind\{x_{i,1} \in [b, a]\cap \calX_1\}},\label{min-max-formula}
\end{align}
The half-open interval
$(x_{0,1}-\ellhat_1w_{n,1},x_{0,1}+\rhat_1w_{n,1}]$
contains exactly the consecutive design points in the fitted constant interval.
Thus, $\rhat_1$ and $\ellhat_1$ are its scaled right and left widths. By the
piecewise linearity of $F_{n,1}$, the optimizing primitive endpoints may be
chosen from the knot set $\{0,1/n_1,\ldots,1\}$. Hence,
$x_{0,1}+\rhat_1w_{n,1}$ and $x_{0,1}-\ellhat_1w_{n,1}$ are grid knots.
The left knot may equal zero and need not belong to $\calX_1$.

The scaled right and left widths $(\rhat_1,\ellhat_1)$ of the fitted constant interval form a saddle pair for the objective in \eqref{f-oracle}. For fixed $\rhat_1$, the objective is maximized at $\ellhat_1$, and for fixed $\ellhat_1$, it is minimized at $\rhat_1$. This follows from the standard equality of the min--max and max--min characterizations of the isotonic LSE (see Section~3.3 of \citet{groeneboom2014nonparametric}).

Let $c>0$ and define the one-sided localized estimator $\fhat_{1, c}^{or}$ as follows.
\begin{align}\label{f-oracle-local}
    \fhat_{1, c}^{or}(x_{0,1}) =
    \begin{cases}    
    \min\limits_{0 \le r \le c}\max\limits_{0 < \ell \le c}\f{F_{n,1}(x_{0,1}+rw_{n,1}) - F_{n,1}(x_{0,1} - \ell w_{n,1})}{(r+\ell)w_{n,1}}, & \beta_1 \ge 1/3\\
    \Ybar_{x_{0,1}}, & \beta_1 <1/3.
    \end{cases}
\end{align}

\subsection{Local rate of convergence}
In this subsection, we establish the rate of convergence of $\fhat_{1}^{or}(x_{0,1})$. For an interval $I\subset \R$ with $I\cap\calX_1\neq \emptyset$, define
\begin{align*}
    \overline{f_1^*}\big|_{I}
    &:= \f{\sum_{i=1}^n f_1^*(x_{i,1})\ind\{x_{i,1}\in I\}}{\sum_{i=1}^n\ind\{x_{i,1}\in I\}},\\
    \vepsbar\big|_{I}
    &:= \f{\sum_{i=1}^n \veps_i\,\ind\{x_{i,1}\in I\}}{\sum_{i=1}^n \ind\{x_{i,1}\in I\}}.
\end{align*}

\begin{proposition}\label{rate-cvgc}
    Assume the same conditions as in Theorem \ref{theorem-lattice}. With $w_{n,1}$ defined in \eqref{rate-cvgc-def}, we have $\fhat_{1}^{or}(x_{0,1}) - f_1^*(x_{0,1}) = O_p(w_{n,1})$.
\end{proposition}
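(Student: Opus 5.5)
We prove the bound $\fhat_1^{or}(x_{0,1})-f_1^*(x_{0,1}) = O_p(w_{n,1})$ from the min--max formula \eqref{min-max-formula}, treating the upper and lower tails separately. By symmetry it suffices to show that for every $\eta>0$ there is $M$ with $\P\big(\fhat_1^{or}(x_{0,1}) > f_1^*(x_{0,1}) + M w_{n,1}\big)\le \eta$ for all large $n$.

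\textbf{Upper tail.} Fix $a_n$ as the largest grid point of $\calX_1$ with $a_n\le x_{0,1}+w_{n,1}$. When $\beta_1<1/3$ we have $n_1 w_{n,1} = n^{\beta_1-(1-\beta_1)/2}\to 0$, so $a_n = x_{0,1}$. Taking $u_j=a_n$ in the outer minimum gives
\[
\fhat_1^{or}(x_{0,1}) \le \max_{b\le x_{0,1}} \Big(\overline{f_1^*}\big|_{[b,a_n]} + \vepsbar\big|_{[b,a_n]}\Big) \le f_1^*(a_n) + \max_{b\le x_{0,1}} \vepsbar\big|_{[b,a_n]},
\]
using monotonicity of $f_1^*$. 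By Assumption~\ref{smoothness-assumption}, $f_1^*(a_n)-f_1^*(x_{0,1}) = O(w_{n,1})$. It then remains to show $\max_{b\le x_{0,1}}\vepsbar|_{[b,a_n]} = O_p(w_{n,1})$.

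\textbf{The noise maximum.} The strip averages $\vepsbar_{x}$, $x\in\calX_1$, are i.i.d.\ with variance $\sigma^2 n^{-(1-\beta_1)}$. An interval $[b,a_n]$ containing $k$ grid points therefore has average equal to $\sigma n^{-(1-\beta_1)/2}$ times a normalized partial sum $k^{-1}S_k$ of i.i.d.\ mean-zero, unit-variance variables, read backward from $a_n$. By the Hájek--Rényi (or Kolmogorov) maximal inequality,
\[
\P\Big(\max_{k\ge k_0}|S_k|/k > t\Big) \lesssim t^{-2} k_0^{-1}.
\]
The smallest admissible $k_0$ is the number of grid points in $[x_{0,1}, a_n]$, which is $\asymp \max(1, n_1 w_{n,1})$. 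The maximum is therefore $O_p\big(n^{-(1-\beta_1)/2}(1\vee n_1w_{n,1})^{-1/2}\big)$. Checking the two regimes of \eqref{rate-cvgc-def}:
\begin{itemize}
\item if $\beta_1\ge 1/3$, this is $n^{-(1-\beta_1)/2}\,n^{-(\beta_1-1/3)/2} = n^{-1/3} = w_{n,1}$;
\item if $\beta_1<1/3$, this is $n^{-(1-\beta_1)/2} = w_{n,1}$.
\end{itemize}
This proves the upper tail.

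\textbf{Lower tail and the main obstacle.} The lower tail is symmetric: use the max--min form, fix $b_n\approx x_{0,1}-w_{n,1}$, and bound below by $f_1^*(b_n)+\min_{a\ge x_{0,1}}\vepsbar|_{[b_n,a]}$. The main subtlety is the discrete bookkeeping. The interval always contains at least the point $x_{0,1}$ itself, which is what the $1\vee$ in $k_0$ handles in the Gaussian regime. We must also ensure that the bias bound uses only points within $O(w_{n,1})$ of $x_{0,1}$, which holds because $a_n$ is chosen there. Finally, the maximal inequality must be applied uniformly over a triangular array; this needs only second moments, which Hájek--Rényi provides with constants independent of $n$.
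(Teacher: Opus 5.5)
Your proposal is correct and follows essentially the paper's argument. Both proofs fix one endpoint at distance of order $w_{n,1}$ from $x_{0,1}$ (or at $x_{0,1}$ itself when $\beta_1<1/3$), bound the bias using monotonicity and Assumption~\ref{smoothness-assumption}, and control the noise average uniformly over the free endpoint with a second-moment maximal inequality for normalized partial sums. Your Hájek--Rényi bound plays the role of the paper's Lemma~\ref{error-control}, which proves the same estimate via Doob's inequality and dyadic peeling.
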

\begin{proof}

Write $A(r,\ell)$ for the objective in \eqref{f-oracle},
and set $d_n=(n_1w_{n,1})^{-1}$. First suppose that
$\beta_1\ge1/3$, so that $n_1^{-1}\le w_{n,1}$. Set
\[
    r_0=\ell_0:=\lceil n_1w_{n,1}\rceil/(n_1w_{n,1})\in[1,2),
\]
so that $x_{0,1}+r_0w_{n,1}$ and $x_{0,1}-\ell_0w_{n,1}$ are knots of
$F_{n,1}$. By the saddle-point property of
$(\rhat_1,\ellhat_1)$,
\[
    \fhat_1^{or}(x_{0,1})-f_1^*(x_{0,1})
    \le A(r_0,\ellhat_1)-f_1^*(x_{0,1}).
\]
Since the endpoints of the fitted interval are knots, we have
$\ellhat_1\in d_n\Z_{\ge1}$, so both endpoints of the window
$I=(x_{0,1}-\ellhat_1w_{n,1},\,x_{0,1}+r_0w_{n,1}]$ are knots and
$A(r_0,\ellhat_1)=\overline{f_1^*}\big|_{I}+\vepsbar\big|_{I}$. By
monotonicity of $f_1^*$ and Assumption~\ref{smoothness-assumption},
\[
    \overline{f_1^*}\big|_{I}
    \le f_1^*(x_{0,1}+r_0w_{n,1})
    \le f_1^*(x_{0,1})+2C_1w_{n,1},
\]
   where the second inequality follows from Assumption~\ref{smoothness-assumption} for some constant $C_1>0$. Since
$r_0\ge1$, Lemma~\ref{error-control} bounds the noise average uniformly
over $\ellhat_1$:
\[
    \big|\vepsbar\big|_{I}\big|
    \le\sup_{\ell\ge d_n}
    \big|\vepsbar\big|_{(x_{0,1}-\ell w_{n,1},\,x_{0,1}+r_0w_{n,1}]}\big|
    =O_p(w_{n,1}).
\]
Consequently,
\[
    \fhat_1^{or}(x_{0,1})-f_1^*(x_{0,1})
    \le 2C_1w_{n,1}+O_p(w_{n,1}).
\]
Taking $\ell=\ell_0$ in the other saddle-point
inequality, bounding the deterministic average of the
window $(x_{0,1}-\ell_0w_{n,1},\,x_{0,1}+\rhat_1w_{n,1}]$, whose endpoints are knots,
from below by $f_1^*(x_{0,1}-\ell_0w_{n,1})\ge f_1^*(x_{0,1})-2C_1w_{n,1}$,
and applying Lemma~\ref{error-control} with the two endpoints
interchanged gives
\[
    \fhat_1^{or}(x_{0,1})-f_1^*(x_{0,1})
    \ge -2C_1w_{n,1}-O_p(w_{n,1}).
\]

Now suppose that $\beta_1<1/3$. In this regime one coordinate strip contains
$n_{-1}=w_{n,1}^{-2}$ observations. Taking $r=0$ in the upper saddle-point
inequality and $\ell=d_n$ in the lower saddle-point inequality gives
\begin{align*}
    \fhat_1^{or}(x_{0,1})-f_1^*(x_{0,1})
    &\le
      \sup_{\ell\ge d_n}
      \left|\vepsbar\big|_{(x_{0,1}-\ell w_{n,1},x_{0,1}]}\right|,\\
    \fhat_1^{or}(x_{0,1})-f_1^*(x_{0,1})
    &\ge
      -\sup_{r\ge0}
      \left|\vepsbar\big|_{(x_{0,1}-d_nw_{n,1},
                                 x_{0,1}+rw_{n,1}]}\right|.
\end{align*}
Monotonicity removes the deterministic terms in the indicated directions,
and Lemma~\ref{error-control} makes both suprema $O_p(w_{n,1})$. This
completes the proof.

\end{proof}

\subsection{Localizing the estimator}
We want to show that $\fhat_1^{or}$ is equal to the localized version $\fhat_{1,c}^{or}(x_{0,1})$ with high probability, where $\fhat_{1,c}^{or}(x_{0,1})$ is defined in \eqref{f-oracle-local}.
\begin{proposition}\label{large-localization-prop}
    Under the same conditions as Theorem \ref{theorem-lattice}, 
    {
    \begin{equation*}
        \lim_{c \rightarrow \infty}\limsup_{n \rightarrow  \infty}
        \P\Big\{\fhat_1^{or}(x_{0,1})\ne\fhat_{1,c}^{or}(x_{0,1})\Big\}=0,
    \end{equation*}
    }
    where $\fhat_1^{or}(x_{0,1})$ and $\fhat_{1,c}^{or}(x_{0,1})$ are defined in \eqref{f-oracle} and \eqref{f-oracle-local} respectively.
\end{proposition}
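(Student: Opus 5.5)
Two cases arise from the definition \eqref{f-oracle-local}. Write $A(r,\ell)$ for the objective in \eqref{f-oracle} and $(\rhat_1,\ellhat_1)$ for the saddle pair.

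\emph{Case $\beta_1\ge 1/3$.} The plan is to show that the fitted constant block is short on the $w_{n,1}$ scale:
\[
\lim_{c\to\infty}\limsup_{n\to\infty}\P(\rhat_1>c \text{ or } \ellhat_1>c)=0.
\]
On the complement event $\{\rhat_1\le c,\ \ellhat_1\le c\}$ we must check that the localized min--max equals the global one. Since $(\rhat_1,\ellhat_1)$ is a saddle point of $A$ on the full domain, it remains a saddle point of $A$ restricted to $[0,c]\times(0,c]$. For a restricted saddle point the min--max value is the saddle value, so $\fhat_{1,c}^{or}(x_{0,1})=A(\rhat_1,\ellhat_1)=\fhat_1^{or}(x_{0,1})$.

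To bound $\P(\rhat_1>c)$, use the standard switching-type characterization. If the block extends beyond $x_{0,1}+cw_{n,1}$, then the fitted value at the design point just right of $x_{0,1}+cw_{n,1}$ equals $\fhat_1^{or}(x_{0,1})$. By Proposition~\ref{rate-cvgc}, $\fhat_1^{or}(x_{0,1})\le f_1^*(x_{0,1})+Kw_{n,1}$ with probability at least $1-\eta$. Hence the event requires
\[
\fhat_1^{or}(x_{0,1}+cw_{n,1})-f_1^*(x_{0,1}+cw_{n,1})\le Kw_{n,1}-a c w_{n,1}/2, \qquad a=f_1^{*\prime}(x_{0,1})>0,
\]
using Assumption~\ref{smoothness-assumption}. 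Expressing this through the max--min formula at that point, some window starting at or after $x_{0,1}+cw_{n,1}$ on its left must have average $\le f_1^*(x_{0,1})+Kw_{n,1}$. Subtracting the mean, this forces the noise average $\vepsbar|_{(x_{0,1}+cw_{n,1}-\ell w_{n,1},\,x_{0,1}+cw_{n,1}+sw_{n,1}]}$ to be below $-(ac/2-K)w_{n,1}$ for some $s\ge$ one grid step.

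Rather than the heavy pointwise argument, the cleaner route is a direct one. The event $\rhat_1>c$ implies $A(\rhat_1,\ell)\le A(c',\ell)$ fails for the knot $c'\approx c$. More precisely, the minimizing $r$ exceeds $c$, so for every $r\le c$ there is an $\ell$ with $A(r,\ell)>\fhat_1^{or}$. The most usable form is the following: $\rhat_1>c$ implies that the average over $(x_{0,1}+c w_{n,1},\,x_{0,1}+\rhat_1w_{n,1}]$ is at most $\fhat_1^{or}(x_{0,1})$, by the block property that right sub-blocks have averages $\le$ the block value. The deterministic part of that average is $\ge f_1^*(x_{0,1})+a c w_{n,1}/2$ for large $n$, using monotonicity beyond the local neighborhood. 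Hence
\[
\inf_{s>0}\vepsbar\big|_{(x_{0,1}+cw_{n,1},\,x_{0,1}+(c+s)w_{n,1}]}\le -(ac/2-K)w_{n,1}.
\]
Controlling this uniformly in $s$, with a bound decaying in $c$, is the main obstacle. I would peel over dyadic shells $s\in[2^k,2^{k+1})$ and use a maximal (Kolmogorov/Doob) inequality on the partial sums of the independent strip means $\vepsbar_{x}$, each with variance $\sigma^2/n_{-1}$. Over a window of $m$ grid points the average has standard deviation $\sigma/\sqrt{m n_{-1}}\lesssim w_{n,1}\,(s+\text{grid})^{-1/2}$. This holds because $n_1w_{n,1}\ge1$ and $n_1 n_{-1}w_{n,1}^3\asymp 1$ for $\beta_1\ge 1/3$. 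Since the window is at least one knot wide, the standard deviation is also $\lesssim w_{n,1}$. This gives a probability bound of order $\sum_k (2^k)^{-1}(c/2)^{-2}\cdots$; the windows start at $c$, so a cleaner bound is of order $\sum_k 1/(c^2 2^k)\lesssim c^{-2}$. This tends to $0$ as $c\to\infty$, and it is essentially the content of Lemma~\ref{error-control} in shifted form. The same argument applied to left sub-blocks and $\ellhat_1$ handles the left side.

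\emph{Case $\beta_1<1/3$.} Here $d_n=(n_1w_{n,1})^{-1}\to\infty$, and $\fhat_{1,c}^{or}=\Ybar_{x_{0,1}}$. We need to show that the fitted block is $\{x_{0,1}\}$ with probability tending to one. Neighbouring strip means differ by $f_1^*(x_{0,1}+1/n_1)-f_1^*(x_{0,1})\approx a/n_1$, while the noise is of order $w_{n,1}=n^{-(1-\beta_1)/2}\ll 1/n_1$ because $\beta_1<1/3$. By the same block-average argument, pooling $x_{0,1}$ with any right sub-block forces a noise average below $-(a/2n_1)$, i.e.\ below $-(a d_n/2)w_{n,1}$. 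The peeling bound gives probability $\lesssim d_n^{-2}\to0$, and symmetrically on the left. Hence $\fhat_1^{or}(x_{0,1})=\Ybar_{x_{0,1}}$ with probability $\to1$, uniformly in $c$.
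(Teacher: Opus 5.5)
Your reduction to $\P(\rhat_1>c \text{ or } \ellhat_1>c)\to0$ is sound, as are the restricted-saddle-point argument on $\{\rhat_1\le c,\ \ellhat_1\le c\}$ and the right-sub-block inequality. The treatment of $\beta_1\le1/3$ also works. The gap is in the probability bound for $\beta_1>1/3$, the Chernoff regime.

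Your window $(x_{0,1}+cw_{n,1},\,x_{0,1}+(c+s)w_{n,1}]$ is anchored at $x_{0,1}+cw_{n,1}$. On $\{\rhat_1>c\}$ its scaled width $s=\rhat_1-c$ can be as small as one grid step $d_n=(n_1w_{n,1})^{-1}$. When $\beta_1>1/3$, $n_1w_{n,1}=n^{\beta_1-1/3}\to\infty$, so $d_n\to0$. A window of scaled width $s$ has noise standard deviation $\sigma w_{n,1}s^{-1/2}$, so a single strip has standard deviation $\sigma n_{-1}^{-1/2}=\sigma w_{n,1}d_n^{-1/2}\gg w_{n,1}$. Your claim that a window ``at least one knot wide'' has standard deviation $\lesssim w_{n,1}$ therefore holds only when $n_1w_{n,1}\lesssim1$. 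The dyadic peeling must start at $2^k\approx d_n$, not at $2^k=1$, and it gives $\sum_{2^k\ge d_n}c^{-2}2^{-k}\asymp c^{-2}n_1w_{n,1}\to\infty$ rather than $c^{-2}$.

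In fact, for each fixed $c$, the single strip just right of $x_{0,1}+cw_{n,1}$ already has noise average below $-(ac/2-K)w_{n,1}$ with probability tending to $1/2$. Taking the infimum over $s$ pushes this probability to one. So the event you derive says nothing about $\rhat_1$. Lemma~\ref{error-control} records exactly this restriction: its bound is $O_p(w_{n,1})$ only when the smallest window contains $\gtrsim w_{n,1}^{-2}$ observations, which for $\beta_1>1/3$ requires a fixed extension of scaled width at least $1$. The left-hand argument for $\ellhat_1$, anchored at $x_{0,1}-cw_{n,1}$, fails in the same way.

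The fix is to anchor the comparison window well inside the fitted block. On $\{\rhat_1>c\}$, use the right sub-block $(x_{0,1},\,\uhat_1]$ instead; its average is still at most $\fhat_1^{or}(x_{0,1})$ and its scaled width exceeds $c\ge1$. By monotonicity, its deterministic average is at least the average of $f_1^*$ over $(x_{0,1},x_{0,1}+cw_{n,1}]$, which is $\ge f_1^*(x_{0,1})+acw_{n,1}/4$ for large $n$. The peeling argument of Lemma~\ref{error-control} then makes its noise average $O_p(w_{n,1})$ uniformly over right endpoints.

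This is essentially the paper's route. The paper uses the saddle inequality $A(\rhat_1,\ell_0)\le\fhat_1^{or}(x_{0,1})$ with a fixed left extension $\ell_0\in[1,2)$, so every averaging window has scaled width at least $1+c$. It combines this with monotonicity of $r\mapsto A_f(r,\ell_0)$. Your $\beta_1=1/3$ case is valid as written because $d_n=1$ there.
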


\begin{proof}

Write $A(r,\ell)=A_f(r,\ell)+A_{\varepsilon}(r,\ell)$ for the
deterministic and stochastic parts of the objective in \eqref{f-oracle}.
Set $d_n=(n_1w_{n,1})^{-1}$ and
$\ell_0=\lceil n_1w_{n,1}\rceil/(n_1w_{n,1})$, so that
$x_{0,1}-\ell_0w_{n,1}$ is a knot of $F_{n,1}$. Then $\ell_0\in[1,2)$
when $\beta_1\ge1/3$, and $\ell_0=d_n$ when $\beta_1<1/3$.
First suppose that $\beta_1\ge1/3$. It suffices to show that
\[
    \lim_{c\to\infty}\limsup_{n\to\infty}
    \P\{\rhat_1>c\text{ or }\ellhat_1>c\}=0.
\]
Suppose that $\rhat_1>c>1$. By monotonicity of $f_1^*$, the deterministic part of $F_{n,1}$ is piecewise linear and convex, so the deterministic
objective $A_f(r,\ell_0)$ is nondecreasing in $r$. The lower saddle-point
inequality with $\ell=\ell_0$ therefore gives
\begin{align}
    \fhat_1^{or}(x_{0,1})-f_1^*(x_{0,1})
    &\ge A(\rhat_1,\ell_0)-f_1^*(x_{0,1})\nonumber\\
    &\ge A_f(c,\ell_0)-f_1^*(x_{0,1})
       +A_{\varepsilon}(\rhat_1,\ell_0).
       \label{large-dev-low-bound1}
\end{align}
Since $\beta_1\ge1/3$, the lattice spacing satisfies
$n_1^{-1}\le w_{n,1}$. For each fixed $c$ and sufficiently large $n$, we
may therefore replace the right endpoint $x_{0,1}+cw_{n,1}$ by an adjacent knot of $F_{n,1}$, while the left endpoint $x_{0,1}-\ell_0w_{n,1}$ is already a knot.
By Assumption~\ref{smoothness-assumption}, this changes the deterministic
secant slope by at most $C_3w_{n,1}$. Hence, for sufficiently large $c$
and then all sufficiently large $n$,
\begin{equation}
    A_f(c,\ell_0)-f_1^*(x_{0,1})
    \ge C_2cw_{n,1}-C_3w_{n,1}
    \ge C_4cw_{n,1},                         \label{large-dev-low-bound2}
\end{equation}
where $C_2,C_3,C_4>0$ do not depend on $c$ or $n$.

Define
\begin{equation}
    \Xi_n
    :=w_{n,1}^{-1}\sup_{r\ge0}|A_{\varepsilon}(r,\ell_0)|.
    \label{eqn-25}
\end{equation}
As $r$ moves between two consecutive knots, the numerator and
the denominator of $A_{\varepsilon}(r,\ell_0)$ are both affine in $r$, so
$A_{\varepsilon}(r,\ell_0)$ is monotone on each such segment. The supremum
in \eqref{eqn-25} therefore equals the supremum over the knot widths
$r\in d_n\Z_{\ge0}$, and for such $r$ both endpoints of the averaging
window are knots.
Lemma~\ref{error-control} with the two endpoints interchanged then gives
$\Xi_n=O_p(1)$ in both regimes, and $\Xi_n$ does not depend on $c$.
Hence, on $\{\rhat_1>c\}$,
\[
    w_{n,1}^{-1}
    \{\fhat_1^{or}(x_{0,1})-f_1^*(x_{0,1})\}
    \ge C_4c-\Xi_n.
\]
Consequently,
\begin{align*}
    \P\{\rhat_1>c\}
    &\le
    \P\left\{
      w_{n,1}^{-1}
      \{\fhat_1^{or}(x_{0,1})-f_1^*(x_{0,1})\}
      \ge \f{C_4c}{2}
    \right\}\\
    &\quad+
    \P\left\{\Xi_n\ge\f{C_4c}{2}\right\}.
\end{align*}
Both random variables on the right are $O_p(1)$, the first by
Proposition~\ref{rate-cvgc}. Therefore,
\[
    \lim_{c\to\infty}\limsup_{n\to\infty}\P\{\rhat_1>c\}=0.
\]
The same argument with the endpoints interchanged proves the corresponding
claim for $\ellhat_1$.

Now suppose that $\beta_1<1/3$. The possible
endpoint widths are integer multiples of $d_n$, and $d_n\to\infty$.
On $\{\rhat_1\ge d_n\}$, the window whose endpoints are knots
$(x_{0,1}-n_1^{-1},\,x_{0,1}+\rhat_1w_{n,1}]$ contains the strip at
$x_{0,1}$ and at least one strip above it. Since
$f_1^{*\prime}(x_{0,1})>0$, the lower saddle-point inequality with
$\ell=\ell_0=d_n$ gives
\begin{align}
    \fhat_1^{or}(x_{0,1})-f_1^*(x_{0,1})
    \ge Cn_1^{-1}+A_{\varepsilon}(\rhat_1,\ell_0).
    \label{large-dev-low-bound3}
\end{align}
It follows from \eqref{eqn-25} that
\begin{align*}
    \P\{\rhat_1\ge d_n\}
    &\le
    \P\left\{
      w_{n,1}^{-1}
      \{\fhat_1^{or}(x_{0,1})-f_1^*(x_{0,1})\}
      \ge \f C2\f{n_1^{-1}}{w_{n,1}}
    \right\}\\
    &\quad+
    \P\left\{
      \Xi_n\ge \f C2\f{n_1^{-1}}{w_{n,1}}
    \right\}.
\end{align*}
Since both random variables are $O_p(1)$ and
$n_1^{-1}/w_{n,1}\to\infty$, the two probabilities tend to zero. Thus,
$\P\{\rhat_1=0\}\to1$. Interchanging the endpoints and starting from
$\{\ellhat_1\ge2d_n\}$ similarly gives
$\P\{\ellhat_1=d_n\}\to1$. In this regime the full estimator therefore
equals $\Ybar_{x_{0,1}}$ with probability tending to one, which is the
localized definition in \eqref{f-oracle-local}. This completes the proof.

\end{proof}
	        
\subsection{Limiting distribution for \texorpdfstring{$\beta_1>1/3$}{beta1 > 1/3}}\label{lim-Chernoff-sec}
We now prove the limiting distribution of $\fhat_1^{or}$ for $\beta_1>1/3$.
\begin{proof}[Proof of Theorem \ref{theorem-lattice}, case $\beta_1>1/3$]
By \eqref{f-oracle-local} and Lemma \ref{lm-switch}, we have
\begin{align}
    &\P\biggl\{ w_{n,1}^{-1}\Big[  \fhat_{1,c}^{or}(x_{0,1})-f_1^*(x_{0,1}) \Big] \le t \biggr\} \nonumber\\
    &= \P\left\{ w_{n,1}^{-1}\left[ \min_{r \in [0,c]} \max_{\ell \in (0,c]}  \f{F_{n,1}(x_{0,1}+rw_{n,1}) - F_{n,1}(x_{0,1} - \ell w_{n,1})}{(r+\ell)w_{n,1}} - f_1^*(x_{0,1}) \right]\le t\right\}\nonumber\\
    &=\P\left\{ \min_{r \in [0,c]} \max_{\ell \in (0,c]}  w_{n,1}^{-2} \left(F_{n,1}(x_{0,1}+rw_{n,1})-F_{n,1}(x_{0,1}) +F_{n,1}(x_{0,1})\right. \right. \nonumber\\
    &\left. \left.\quad \quad \quad\quad\quad\quad\quad\quad\quad - F_{n,1}(x_{0,1} - \ell w_{n,1}) - (r+\ell)w_{n,1} f_1^*(x_{0,1})\right)- (r+\ell)t  \le 0 \right\}\nonumber\\
    &:=\P\{-R_n + L_n \le 0\},\label{eqn-26}
\end{align}
where 
\begin{align}
    R_n &= \max_{r \in [0,c]} w_{n,1}^{-2} \bigl(-F_{n,1}(x_{0,1}+rw_{n,1}) +F_{n,1}(x_{0,1})+rw_{n,1}f_1^*(x_{0,1})+w_{n,1}^2rt \bigr),\label{Rn-def}\\
    L_n &= \sup_{\ell \in (0,c]}w_{n,1}^{-2} \bigl( - F_{n,1}(x_{0,1}-\ell w_{n,1})+F_{n,1}(x_{0,1}) - \ell w_{n,1}f_1^*(x_{0,1})-w_{n,1}^{2} \ell t \bigr)\label{Ln-def}.
\end{align}

Write $a=f_1^{*\prime}(x_{0,1})$ and define the centered and rescaled
two-sided cumulative sum process, for every $s$ such that
$x_{0,1}+sw_{n,1}\in[0,1]$, by
\begin{align}
    Q_n(s)
    &:=
    w_{n,1}^{-2}
    \left\{
      F_{n,1}(x_{0,1}+sw_{n,1})-F_{n,1}(x_{0,1})
      -sw_{n,1}f_1^*(x_{0,1})
    \right\}. \label{two-sided-Qn}
\end{align}
For every fixed $c>0$, the interval $[-c,c]$ is contained in this domain
for all sufficiently large $n$. The linear interpolation in $F_{n,1}$
already accounts for a fractional terminal strip. A uniform Taylor
expansion under Assumption~\ref{smoothness-assumption} gives
\[
    \sup_{|s|\le c}
    \left|\E Q_n(s)-\f a2s^2\right|\longrightarrow0.
\]
After translating the strip index to $x_{0,1}$, Lemma~\ref{weak-convergence-lemma} applies separately to the disjoint right and left strips. Its proof applies on every fixed $[0,c]$ after deterministic time rescaling. The two halves are independent for every $n$, and hence
\begin{align}
    Q_n\rightsquigarrow Q
    \quad\text{in }C([-c,c]),
    \qquad
    Q(s):=\sigma W(s)+\f a2s^2, \label{two-sided-Q-conv}
\end{align}
where $W$ is a two-sided Brownian motion with independent halves.

The definitions \eqref{Rn-def}--\eqref{Ln-def} give the exact identities
\[
    R_n=\sup_{0\le r\le c}\{-Q_n(r)+tr\},
    \qquad
    L_n=\sup_{0<\ell\le c}\{-Q_n(-\ell)-t\ell\}.
\]
Since $Q_n$ is continuous at zero and $Q_n(0)=0$,
$-Q_n(-\ell)-t\ell\to0$ as $\ell\downarrow0$. Therefore, adjoining
$\ell=0$ does not change the second supremum.
For $q\in C([-c,c])$, define
\[
    \Phi(q):=
    \left(
      \sup_{0\le r\le c}\{-q(r)+tr\},
      \sup_{0\le\ell\le c}\{-q(-\ell)-t\ell\}
    \right).
\]
The bound
\[
    \|\Phi(q)-\Phi(\widetilde q)\|_\infty
    \le\|q-\widetilde q\|_\infty
\]
shows that $\Phi$ is continuous. Therefore, \eqref{two-sided-Q-conv} and the continuous mapping theorem give
\begin{align}
    (R_n,L_n)
    \rightsquigarrow
    \left(
      \sup_{0\le r\le c}\left\{-\sigma W(r)-\f a2r^2+tr\right\},
      \sup_{0\le\ell\le c}\left\{-\sigma W(-\ell)-\f a2\ell^2-t\ell\right\}
    \right). \label{joint-Rn-Ln}
\end{align}

Recall $Q(s)=\sigma W(s)+\f a2 s^2$ from \eqref{two-sided-Q-conv}. The joint convergence \eqref{joint-Rn-Ln}, \eqref{eqn-26}, and Lemma~\ref{lm-switch} yield, as $n\to\infty$,
\begin{align*}
    &\P\left\{ w_{n,1}^{-1}\big(\fhat^{or}_{1,c}(x_{0,1})-f_1^*(x_{0,1})\big) \le t \right\} \\
    &\rightarrow \P\left\{ \min_{r \in [0,c]}\max_{\ell \in (0,c]} \sigma \left(W(r)-W(-\ell)\right)+\f{f_1^{*\prime}(x_{0,1})( r^2 - \ell^2)}{2} - t(r+\ell) \le 0\right\}\\
    &=\P\left\{\min_{r \in [0,c]}\max_{\ell \in (0,c]}   \f{Q(r) - Q(-\ell)}{r+\ell}\le t  \right\},
\end{align*}
where the last equality follows by Lemma \ref{lm-switch}.
The two suprema in \eqref{joint-Rn-Ln} are independent, since $W$ has
independent left and right halves. Each has a continuous distribution as the
supremum of Brownian motion plus a continuously differentiable drift on a
nondegenerate compact interval \citep{morters2010brownian}. Their difference
therefore has a continuous distribution, so $0$ is a continuity point of the
limit of $L_n - R_n$ in \eqref{eqn-26}.
By Proposition \ref{large-localization-prop}, we have
\begin{align*}
    \lim_{c \rightarrow \infty}\limsup_{n \rightarrow \infty}
    \P\bigl\{\fhat_1^{or}(x_{0,1}) \neq \fhat_{1,c}^{or}(x_{0,1})\bigr\}
    = 0.
\end{align*}
Define 
\begin{align*}
    V_c &= \min_{r \in [0,c]}\max_{\ell \in (0,c]} \f{Q(r) - Q(-\ell)}{r+\ell},\\
    V &= \min_{r \ge 0} \max_{\ell > 0} \f{Q(r) - Q(-\ell)}{r+\ell}.
\end{align*}
By Proposition 7 in \citet{han2020limit}, we know that
\begin{align*}
    \lim_{c\rightarrow \infty}\P\{ V_c  \neq V \} =0. 
\end{align*}
Therefore, the Lemma~\ref{delocalization-lemma} from \citet{rao1969estimation} gives $ w_{n,1}^{-1}(\fhat^{or}_{1}(x_{0,1}) -f_1^*(x_{0,1})) \rightsquigarrow V$. Note that $V$  \citep{rao1969estimation}  can be written as 
\begin{align*}
    V &= gcm\left(\sigma W(h)+ \f{f_1^{*\prime}(x_{0,1})h^2}{2} \right)'_-(0)\\
      & \stackrel{d}{=} \Big[ 4\sigma^2f^{*\prime}_1(x_{0,1}) \Big]^{1/3}\mathbb{C},
\end{align*}
where $\mathbb{C}=\argmin_{h \in \R}\{W(h)+h^2\}$ is Chernoff's distribution and $W$ is a two-sided Brownian motion starting from zero. 
\end{proof}

\subsection{Limiting distribution for \texorpdfstring{$\beta_1<1/3$}{beta1 < 1/3}}\label{data-poor-regime}

\begin{proof}[Proof of Theorem \ref{theorem-lattice}, case $\beta_1<1/3$]
By \eqref{rate-cvgc-def}, the convergence rate satisfies $w_{n,1} = n^{-(1-\beta_1)/2}$, which is of smaller order than $n_1^{-1}$. On the event $\{\rhat_1 = 0,\ \ellhat_1 \le (n_1w_{n,1})^{-1} \}$, the increment of $F_{n,1}$ in \eqref{f-oracle} spans only the strip at $x_{0,1}$, so $\fhat_1^{or}(x_{0,1})=\Ybar_{x_{0,1}}=f_1^*(x_{0,1})+\vepsbar_{x_{0,1}}$ and 
\begin{align*}
	&w_{n,1}^{-1}(\fhat_1^{or}(x_{0,1}) - f_1^*(x_{0,1})) \\
		&=w_{n,1}^{-1}\vepsbar_{x_{0,1}} = n^{(1-\beta_1)/2}\vepsbar_{x_{0,1}}\\
	&\rightsquigarrow N(0,\sigma^2).
\end{align*}
By Proposition \ref{large-localization-prop}, we know that $\lim_{n \rightarrow \infty}\P\{ \rhat_1 = 0,\ \ellhat_1 \le (n_1w_{n,1})^{-1} \} = 1$. Therefore, $w_{n,1}^{-1}(\fhat_1(x_{0,1}) - f_1^*(x_{0,1})) \rightsquigarrow N(0, \sigma^2)$.
\end{proof}

\subsection{Limiting distribution for \texorpdfstring{$\beta_1 = 1/3$}{beta1 = 1/3}}
We first prove that the limiting distribution under $\beta_1 = 1/3$ can be localized.
\begin{proposition}\label{local-limit-critical}
    For $t \in \R$, $c>0$ and $f_1^{*\prime}(x_{0,1})>0$, define 
    \begin{align*}
        G_c &=\min_{r \in [0,c] \cap \Z} \max_{\ell \in [1, c] \cap \Z} \sigma S(r) - \sigma S(-\ell) + \f{f_1^{*\prime}(x_{0,1})(r(r+1) - \ell(\ell-1))}{2} - t(r+\ell),\\
        G &= \min_{r \in \Z_{\ge 0}} \max_{\ell \in  \Z_{\ge 1}} \sigma S(r) - \sigma S(-\ell) + \f{f_1^{*\prime}(x_{0,1})(r(r+1) - \ell(\ell-1))}{2} - t(r+\ell),
    \end{align*}
    where $S(\cdot)$ is a two-sided random walk with standard Gaussian increments. Then $\P\{ G_c \neq G \} \rightarrow 0$ as $c \rightarrow \infty$.
\end{proposition}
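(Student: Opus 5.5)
Write $a=f_1^{*\prime}(x_{0,1})>0$ and
\[
H(r,\ell)=\sigma S(r)-\sigma S(-\ell)+\tfrac a2\{r(r+1)-\ell(\ell-1)\}-t(r+\ell).
\]
Since $H(r,\ell)=\{\sigma S(r)+\tfrac a2 r(r+1)-tr\}-\{\sigma S(-\ell)+\tfrac a2\ell(\ell-1)+t\ell\}$ splits into a function of $r$ minus a function of $\ell$, the max over $\ell$ does not depend on $r$. Set
\[
A(r)=\sigma S(r)+\tfrac a2 r(r+1)-tr,\qquad B(\ell)=-\sigma S(-\ell)-\tfrac a2\ell(\ell-1)-t\ell .
\]
Then
\[
G_c=\min_{0\le r\le c}A(r)+\max_{1\le\ell\le c}B(\ell),\qquad G=\inf_{r\ge0}A(r)+\sup_{\ell\ge1}B(\ell).
\]
This is the discrete analogue of the switching in Lemma~\ref{lm-switch}; here it is immediate from additivity.

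First I will show that $G$ is a.s.\ finite and that the optimizers exist. By the strong law, $S(r)/r\to0$ a.s. Hence $A(r)\ge \tfrac a2 r^2-|t|r-\sigma|S(r)|\to\infty$ a.s. So the infimum over $r\ge0$ is attained on a finite, a.s.\ bounded random set. Let $r^\dagger$ be the smallest minimizer, which is an a.s.\ finite random integer. Likewise $B(\ell)\to-\infty$ a.s., so a maximizer $\ell^\dagger$ exists and is a.s.\ finite.

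On the event $\{r^\dagger\le c,\ \ell^\dagger\le c\}$ the restricted optima coincide with the full ones, so $G_c=G$. Therefore
\[
\P\{G_c\ne G\}\le \P\{r^\dagger>c\}+\P\{\ell^\dagger>c\}\longrightarrow0
\]
as $c\to\infty$, because $r^\dagger$ and $\ell^\dagger$ are a.s.\ finite.

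If a quantitative statement is preferred, I would bound the tail directly. The event $\{r^\dagger>c\}$ implies $\inf_{r>c}A(r)\le A(0)=0$. For $c$ large this forces $\sigma|S(r)|\ge \tfrac a4 r^2$ for some $r>c$. By a Gaussian tail bound and a union bound, that probability is at most $\sum_{r>c}2\exp(-a^2r^3/(32\sigma^2))\to0$.

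The main point needing care is only the a.s.\ coercivity of $A$ and $B$ (equivalently, the union bound above). It is routine because the quadratic drift dominates the $O(\sqrt{r\log\log r})$ growth of the walk. No continuity or uniqueness issues arise, since everything is discrete.
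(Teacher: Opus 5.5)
Your proof is correct and follows essentially the same route as the paper. Both arguments split $G_c$ and $G$ into a separate minimum over $r$ plus a maximum over $\ell$, use the quadratic drift dominating the walk to show that the optimizers are almost surely finite, and conclude that these optimizers lie in $[0,c]$ with probability tending to one. Taking the smallest minimizer is a mild simplification: it avoids the paper's appeal to uniqueness of the minimizer via Lemma~2.6 of \citet{kim1990cube}, which a statement about optimal values does not need.
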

\begin{proof}
    We define
    \begin{align*}
        g_c^+ &= \min_{r \in [0,c] \cap \Z} \sigma S(r)  +\f{f_1^{*\prime}(x_{0,1})r(r+1)}{2} - tr,\\
        g_c^- &= \max_{\ell \in [1,c] \cap \Z} -\sigma S(-\ell) - \f{f_1^{*\prime}(x_{0,1})\ell(\ell-1)}{2} - t\ell,\\
        g^+ &= \inf_{r \in \Z_{\ge 0}} \sigma S(r)  +\f{f_1^{*\prime}(x_{0,1})r(r+1)}{2} - tr,\\
        g^- &= \sup_{\ell \in \Z_{\ge 1} } -\sigma S(-\ell) -  \f{f_1^{*\prime}(x_{0,1})\ell(\ell-1)}{2} - t\ell.
    \end{align*}
    Then $G_c = g_c^+ + g_c^-$ and $G = g^+ + g^-$. It suffices to show that $\lim_{c \rightarrow \infty}\P\{ g_c^+ \neq g^+ \}=0$ and $\lim_{c \rightarrow \infty}\P\{ g_c^- \neq g^- \}=0$. We consider $g_c^+$. The argument for $g_c^-$ is analogous.

    Define $G^+(r)$ as the linear interpolation of
    $\sigma S(r)+(f_1^{*\prime}(x_{0,1})r(r+1))/2-tr$ over integers.
    We first prove that $\inf_{r\ge0}G^+(r)$ is attained and that its
    minimizer is unique almost surely. There exist constants $\eta>0$ and $r_0 \ge 1$, depending only on $f_1^{*\prime}(x_{0,1})$ and $t$, such that $(f_1^{*\prime}(x_{0,1})r(r+1))/2 - tr \ge 2\eta r^2$ for all $r \ge r_0$. By the law of the iterated logarithm, $|S(r)|/r^2 \to 0$ almost surely as $r \to \infty$, so
    \begin{align*}
        G^+(r) \ge \eta r^2 \quad \text{for all } r \ge R_0
    \end{align*}
    for some almost surely finite random variable $R_0 \ge r_0$. In particular, $G^+(r) \rightarrow \infty$ as $r \rightarrow \infty$ almost surely, and the infimum of $G^+$ is attained.
    Since $G^+$ is a Gaussian process with continuous sample paths and $\Var(G^+(y) -G^+(z) )\neq 0$ for $y \neq z$,
    by Lemma 2.6 in \citet{kim1990cube}, there exists a unique $r^*$ such that $g^+= G^+(r^*)$ almost surely.

    Define, for any $c>0$,
    \begin{align*}
        r_c^* = \argmin_{r \in [0,c]}G^+(r),
        \qquad
        r^* = \argmin_{r\ge 0}G^+(r).
    \end{align*}
    In order to prove $\lim_{c \rightarrow \infty}\P\{ g_c^+ \neq g^+ \}=0$, it suffices to prove $\lim_{c \rightarrow \infty}\P\{ r^* \neq r_c^* \} = 0$. Notice that $\min_{r \ge 0} G^+(r) \le G^+(1)$, and on the event $\{R_0 \le c\}$ we have $\inf_{r>c}G^+(r) \ge \eta c^2$. Thus,
    \begin{align*}
        \P\{ r^* \neq r_c^* \} &\le \P\{ G^+(1) \ge \inf_{r >c} G^+(r)\}\\
        &\le \P\{R_0 > c\} + \P\{\sigma Z_1 + f_1^{*\prime}(x_{0,1}) - t \ge \eta c^2\},
    \end{align*}
    which goes to $0$ as $c \rightarrow \infty$ since $R_0$ is almost surely finite.
    Therefore, $\lim_{c \rightarrow \infty} \P\{ g_c^+ \neq g^+ \} = 0$ by the fact that the minimizer of $G^+(r)$ is attained on integers by the linear interpolation.

\end{proof}

Now, we present our proof of the limiting distribution of $\fhat_1^{or}$ for $\beta_1=1/3$.
\begin{proof}[Proof of Theorem \ref{theorem-lattice}, case $\beta_1=1/3$]
Since $w_{n,1}^{-1}=n_1$, the min-max operator can be defined over the integers. We will adopt the same definitions of $R_n$ and $L_n$ in \eqref{Rn-def} and \eqref{Ln-def}. For an integer $c$, as in the proof for $\beta_1>1/3$, we have

\begin{align*}
    R_n & = \max_{r \in [0,c] \cap \Z} -\sum_{x \in (x_{0,1}, x_{0,1}+rw_{n,1}] \cap \calX_1}w_{n,1}^{-1}\left[ f_1^{*\prime}(x_{0,1})(x - x_{0,1})(1+o(1)) +\vepsbar_x\right]+rt+o_p(1)\\
    &=\max_{r \in [0,c] \cap \Z} \f{-f_1^{*\prime}(x_{0,1})r(1+r)}{2}(1+o(1))- w_{n,1}^{-1}\sum_{x \in (x_{0,1}, x_{0,1}+rw_{n,1}] \cap \calX_1}\vepsbar_x + rt+o_p(1),
\end{align*}
and 
\begin{align*}
    L_n & = \max_{\ell \in (0,c] \cap \Z} \sum_{x \in (x_{0,1}-\ell w_{n,1}, x_{0,1}] \cap \calX_1}w_{n,1}^{-1}\left[ f_1^{*\prime}(x_{0,1})(x - x_{0,1})(1+o(1)) +\vepsbar_x\right]-\ell t+o_p(1)\\
    &= \max_{\ell \in (0,c] \cap \Z} \f{-f_1^{*\prime}(x_{0,1})\ell(\ell-1)}{2}(1+o(1))+w_{n,1}^{-1}\sum_{x \in (x_{0,1}-\ell w_{n,1},x_{0,1}] \cap \calX_1}\vepsbar_x-\ell t+o_p(1).
\end{align*}
Since the $\vepsbar_x$ are independent, by the multivariate central limit theorem, for $x_{1,1}, {\ldots}, x_{k,1} \in \calX_1$,
\begin{align*}
    w_{n,1}^{-1}\left(\vepsbar_{x_{1,1}}, \ldots, \vepsbar_{x_{k,1}} \right) \rightsquigarrow N(0, \sigma^2 I_k),
\end{align*}
where $I_k \in \R^{k \times k}$ denotes the $k\times k$ identity matrix. Therefore,
\begin{align*}
    w_{n,1}^{-1}\sum_{x \in (x_{0,1}, x_{0,1}+rw_{n,1}] \cap \calX_1}\vepsbar_x \rightsquigarrow \sigma\sum_{k=1}^{r}  Z_k, \quad w_{n,1}^{-1}\sum_{x \in (x_{0,1}-\ell w_{n,1},x_{0,1}] \cap \calX_1}\vepsbar_x \rightsquigarrow \sigma\sum_{k=1}^{\ell}  Z_{-k},
\end{align*}
where $\{Z_k\}_{k \in \Z_{\neq 0}}$ are i.i.d.\ standard normal random variables. Let $S$ be the associated two-sided random walk, together with its piecewise linear interpolation, as defined in Section~\ref{limit-them-section}. Then we have
\begin{align*}
    R_n &\rightsquigarrow \max_{r \in [0,c] \cap \Z}\f{-f_1^{*\prime}(x_{0,1})r(r+1)}{2}- \sigma S(r)+ rt,\\
    L_n &\rightsquigarrow \max_{\ell \in [1,c] \cap \Z} \f{-f_1^{*\prime}(x_{0,1})(\ell-1)\ell}{2}-\sigma S(-\ell) - \ell t.
\end{align*}

Hence, by the independence of $R_n$ and $L_n$,
\begin{align*}
    &\P\left\{ w_{n,1}^{-1}(\fhat^{or}_{1,c}(x_{0,1}) -f_1^*(x_{0,1})) \le t \right\} \\
    &\rightarrow \P\bigg\{ \min_{r \in [0,c] \cap \Z} \max_{\ell \in [1, c] \cap \Z } \sigma S(r) - \sigma S(-\ell) \\
    &\quad\quad\quad + \f{f_1^{*\prime}(x_{0,1})r(r+1)}{2} - \f{f_1^{*\prime}(x_{0,1})\ell(\ell-1)}{2} - t(r+\ell) \le 0\bigg\}.
\end{align*}
Here the convergence of the probabilities holds because $R_n$ and $L_n$ are independent for every $n$, and the limiting random variable inside the probability is a min--max over the finite index set $([0,c]\cap\Z)\times([1,c]\cap\Z)$ of Gaussian random variables. The min--max value always equals one of finitely many continuously distributed random variables, so $0$ is a continuity point of its distribution function.

By Propositions~\ref{large-localization-prop} and \ref{local-limit-critical} and the Lemma~\ref{delocalization-lemma}, we have
\begin{align*}
    &\P\left\{ w_{n,1}^{-1}(\fhat^{or}_{1}(x_{0,1}) -f_1^*(x_{0,1})) \le t \right\} \\
    &\rightarrow \P\left\{\min_{r \in \Z_{\ge 0}} \max_{\ell \in \Z_{\ge 1} } \f{ \sigma S(r) - \sigma S(-\ell)}{r+\ell}+\f{f_1^{*\prime}(x_{0,1})(r-\ell+1)}{2}\le t\right\}  \\
    &= \P\left\{ \mathbb{G}_{f_1^{*\prime}(x_{0,1})} \le t \right\},
\end{align*}
where $\mathbb{G}_{\alpha}$ is defined in \eqref{gcm-rw-def}.
\end{proof}

\subsection{Asymptotic independence under the fixed lattice design}
In this section, we prove asymptotic joint independence of $(\fhat_1(x_{0,1}), \ldots, \fhat_d(x_{0,d}))$. By Lemma 3.1 of \citet{guntuboyina2018nonparametric} and the discussion at the beginning of Section~\ref{sec-proof-lim-them}, it suffices to prove asymptotic joint independence for the oracle LSEs $(\fhat_1^{or}(x_{0,1}), \ldots, \fhat_d^{or}(x_{0,d}))$ characterized by \eqref{min-max-formula}.
By the min--max representation in \eqref{min-max-formula}, the estimators $\{\fhat_j^{or}\}_{j=1}^d$ interact only through the overlap of the strips over which the local averages are computed. For dimension $j$, the estimator $\fhat_j^{or}(x_{0,j})$ averages observations over the strip $\{x = (x_1,\dots, x_d)^T \in \mathcal{X}: x_j \in (x_{0,j} - \ellhat_j w_{n,j}, x_{0,j}+\rhat_j w_{n,j}]\}$, and this strip intersects the corresponding strips for $\fhat_l^{or}$ with $l \neq j$. We compare a localized version of $\fhat_j^{or}$, defined in \eqref{eqn-local-fhat-def} below, with a modified estimator that removes the overlap region and show that the resulting discrepancy is asymptotically negligible. For this comparison, we need localization bounds on both strip endpoints: lower bounds ensure that the local averages still involve sufficiently many design points after the overlap is removed, while upper bounds keep the overlap region under control. Accordingly, we strengthen the one-sided localization result for $\fhat_j^{or}$ in Proposition~\ref{large-localization-prop} and establish a two-sided localization result.

Recall $ \Ybar^{(j)}_{x_{0,j}} = n^{-\sum_{k\neq j}\beta_k}\sum_{i=1}^n Y_{i}\ind\{x_{i,j} = x_{0,j}\}$ as defined in \eqref{Ybar-def} and $ F_{n,j}\big(i/n_j\big) = \f{1}{n_j} \sum_{k=1}^{i}\Ybar_{x_{k,j}}^{(j)}$ for $i=1,\ldots, n_j$. Define a localized oracle estimator $\fhat_{j,c,\gamma}^{or}(x_{0,j})$ as follows. Fix $c>0$. When $\beta_j>1/3$, also fix $\gamma>0$.
\begin{align}\label{eqn-local-fhat-def}
    \fhat_{j,c,\gamma}^{or}(x_{0,j})=
    \begin{cases}
        \min_{c^{-\gamma} \le r \le c}\max_{c^{-\gamma} \le \ell \le c}\f{F_{n,j}(x_{0,j}+rw_{n,j}) - F_{n,j}(x_{0,j} - \ell w_{n,j})}{(r+\ell)w_{n,j}}, & \beta_j> 1/3 \\
        \min_{0 \le r \le c}\max_{0 < \ell \le c}\f{F_{n,j}(x_{0,j}+rw_{n,j}) - F_{n,j}(x_{0,j} - \ell w_{n,j})}{(r+\ell)w_{n,j}}, & \beta_j =1/3\\
        \Ybar_{x_{0,j}}^{(j)}, & \beta_j < 1/3
    \end{cases}
\end{align}
For $\beta_j \le 1/3$, this reduces to the one-sided localization in \eqref{f-oracle-local} with the index $1$ replaced by $j$. The additional lower truncation $r,\ell\ge c^{-\gamma}$ is imposed only when $\beta_j>1/3$. Throughout this subsection, $\gamma$ is fixed and is suppressed in the subsequent vector and endpoint notation. For $\beta_j\le1/3$, the index $\gamma$ is vacuous.
The next proposition shows that this localization does not change the oracle estimator asymptotically.

\begin{proposition}\label{small-localization-prop}
    Let $\fhat_j^{or}(x_{0,j})$ be the oracle estimator characterized by \eqref{f-oracle} with the index $1$ replaced by $j$, and let $\fhat_{j,c,\gamma}^{or}(x_{0,j})$ be defined in \eqref{eqn-local-fhat-def}. Then
    {
    \[
        \lim_{c \rightarrow \infty}\limsup_{n \rightarrow  \infty}\P\Big\{\fhat_j^{or}(x_{0,j}) \ne \fhat_{j,c,\gamma}^{or}(x_{0,j})\Big\} = 0.
    \]
    }
\end{proposition}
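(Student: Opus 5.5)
For $\beta_j\le 1/3$ the localized estimator \eqref{eqn-local-fhat-def} coincides with the one-sided localization \eqref{f-oracle-local}, with the index $1$ replaced by $j$. In that case the claim is exactly Proposition~\ref{large-localization-prop}, and nothing new is needed. The real work is the regime $\beta_j>1/3$. There the only new feature is the lower truncation $r,\ell\ge c^{-\gamma}$. Write $(\rhat_j,\ellhat_j)$ for the saddle pair of the full problem. If $c^{-\gamma}\le \rhat_j\le c$ and $c^{-\gamma}\le\ellhat_j\le c$, then $(\rhat_j,\ellhat_j)$ remains a saddle pair of the constrained problem. Indeed, the saddle inequalities $A(\rhat_j,\ell)\le A(\rhat_j,\ellhat_j)\le A(r,\ellhat_j)$ hold for all admissible $(r,\ell)$, hence on the restricted box. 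So the constrained min--max equals the full one. It therefore suffices to show
\[
\lim_{c\to\infty}\limsup_{n\to\infty}\P\{\rhat_j>c \text{ or }\ellhat_j>c\}=0
\]
and
\[
\lim_{c\to\infty}\limsup_{n\to\infty}\P\{\rhat_j<c^{-\gamma} \text{ or }\ellhat_j<c^{-\gamma}\}=0.
\]
The first display is the content of the proof of Proposition~\ref{large-localization-prop}.

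For the lower bound, I would use the weak convergence machinery already developed in the case $\beta_1>1/3$. By Proposition~\ref{large-localization-prop}, on an event of probability close to one we have $\rhat_j,\ellhat_j\le K$ for a large fixed $K$. On $[0,K]$ the rescaled process $Q_n$ converges in $C([-K,K])$ to $Q(s)=\sigma W(s)+\frac a2 s^2$, by \eqref{two-sided-Q-conv}. Consider the event $\{\rhat_j<\delta\}$. Its probability is bounded by the probability that the argmin over $r\in[0,K]$ of $R$-type functionals lies in $[0,\delta)$. More concretely, using the switching structure of Lemma~\ref{lm-switch}, $\rhat_j$ is the minimizer over $r$ of $Q_n(r)-\tau r$, where $\tau=w_{n,j}^{-1}(\fhat_j^{or}(x_{0,j})-f_j^*(x_{0,j}))$, which is tight by Proposition~\ref{rate-cvgc}. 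I would bound $\P\{\rhat_j<\delta\}$ by
\[
\P\{|\tau|>M\}+\P\Bigl\{\inf_{|t|\le M}\ \Bigl[\inf_{0\le r<\delta}\{Q_n(r)-tr\}-\inf_{\delta\le r\le K}\{Q_n(r)-tr\}\Bigr]\le 0\Bigr\}.
\]
By the continuous mapping theorem, the second term converges to the analogous probability for $Q$. That limit tends to zero as $\delta\to0$ because Brownian motion started at $0$ a.s. immediately takes negative values beating any linear drift. Precisely, $\inf_{r\le \delta}\{\sigma W(r)+\frac a2r^2-tr\}<0$ for every $\delta$, while the value near $0$ is $0$. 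The unique argmin of $Q(r)-tr$ over $[0,K]$ is a.s. strictly positive, uniformly over $|t|\le M$ by monotonicity of the argmin in $t$. This last point lets me reduce to the endpoint $t=M$ (or $-M$). Taking $\delta=c^{-\gamma}$ and letting $c\to\infty$ gives the claim. The same argument with $W(-\cdot)$ handles $\ellhat_j$.

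The main obstacle is making the reduction from $\rhat_j$ to an argmin over the limit process rigorous. Two complications arise: the random slope $\tau$ couples the two sides, and the grid is discrete with spacing $d_n=(n_1w_{n,j})^{-1}\to0$. I would handle the slope by the monotonicity of argmins in $t$ together with the tightness of $\tau$. I would handle the discreteness by noting that $\rhat_j$ is a knot, so $\{\rhat_j<\delta\}$ is determined by values of $Q_n$ on $[0,\delta]$. Because $d_n\to0$, these values are uniformly close to $Q$, and the event $\{\argmin Q-tr\in[0,2\delta]\}$ has probability $o(1)$ as $\delta\to0$.
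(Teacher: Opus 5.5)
Your proposal is correct. The reduction matches the paper: the case $\beta_j\le1/3$ is Proposition~\ref{large-localization-prop}, and on $\{c^{-\gamma}\le\rhat_j\wedge\ellhat_j,\ \rhat_j\vee\ellhat_j\le c\}$ the saddle inequalities make the truncated and full min--max agree. You differ from the paper in the new step, which rules out $\rhat_j<c^{-\gamma}$ for $\beta_j>1/3$.

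The paper argues directly on that event. It uses $\fhat_1^{or}(x_{0,1})\ge A(\rhat_1,\ell)$ for $\ell\le c^{-b}$ with $0<b<\gamma$, and controls the short right piece by Doob's inequality. The reflection principle for the left partial sums then gives a block average of order $c^{b/2}w_{n,1}$ with high probability, contradicting Proposition~\ref{rate-cvgc}.

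You instead note that $\rhat_j$ minimizes $Q_n(r)-\tau r$ over $r\ge0$. You remove the random slope using tightness of $\tau$ and monotonicity of the minimizer in the slope. You then pass to the limit through a Lipschitz functional of $Q_n$, and use that $\sigma W(r)+\frac a2r^2+Mr$ takes negative values arbitrarily close to $0$.

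What each route buys: yours is softer and reuses \eqref{two-sided-Q-conv}. It gives $\lim_{\delta\downarrow0}\limsup_n\P\{\rhat_j<\delta\}=0$ directly, so $\gamma$ and $b$ play no role. It is essentially a one-sided piece of the argmin convergence the paper later proves in \eqref{joint-conv-Chernoff}. The paper's route is more quantitative and needs only a one-sided Brownian maximum, not argmin properties of the drifted limit.

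Some details to tighten:
\begin{itemize}
\item The argmin property follows from the saddle inequalities, not from Lemma~\ref{lm-switch}. Indeed, $\U_n(r,\ellhat)\ge T_n=\U_n(\rhat,\ellhat)$ rearranges to $Q_n(r)-T_nr\ge Q_n(\rhat)-T_n\rhat$ for all $r\ge0$.
\item With the non-strict inclusion you wrote, ties and the lattice cause no trouble, because $Q_n$ is already the continuous interpolant.
\item Portmanteau for the closed set $(-\infty,0]$ gives only a $\limsup$ bound, not convergence of the probability. That bound is all you need.
\item Reduce to $t=-M$ for $\rhat_j$ and to $t=M$ for $\ellhat_j$. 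Work with the smallest minimizer, since uniqueness holds only almost surely for each fixed $t$.
\item The preliminary restriction $\rhat_j,\ellhat_j\le K$ is unnecessary.
\end{itemize}
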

\begin{proof}
    Without loss of generality, take $j=1$. Let $\rhat_1$ and $\ellhat_1$ be the scaled interval widths in \eqref{f-oracle}. When $\beta_1 < 1/3$, Proposition~\ref{large-localization-prop} already gives $\P\{\rhat_1 = 0,\ \ellhat_1 \le (n_1w_{n,1})^{-1}\}\to 1$. On this event the increment of $F_{n,1}$ in \eqref{f-oracle} spans only the strip at $x_{0,1}$, so $\fhat_1^{or}(x_{0,1})=\Ybar_{x_{0,1}}$, which coincides with the localized definition. Because Proposition~\ref{large-localization-prop} similarly covers $\beta_1 = 1/3$, it remains to consider the case $\beta_1 > 1/3$.

    Proposition~\ref{large-localization-prop} already yields the upper bounds $\rhat_1 \vee \ellhat_1 \le c$ with high probability for large $c$. 
Moreover, on the event $E_c := \{c^{-\gamma} \le \rhat_1 \wedge \ellhat_1,\ \rhat_1 \vee \ellhat_1 \le c\}$ we have $\fhat_1^{or}(x_{0,1}) = \fhat_{1,c,\gamma}^{or}(x_{0,1})$. Indeed, write $A(r,\ell)$ for the objective in \eqref{f-oracle}, and recall the well-known switch property of isotonic least squares: the min--max in \eqref{f-oracle} equals the corresponding max--min, and $(\rhat_1,\ellhat_1)$ attains both values (see, e.g., Section~3.3 of \citet{groeneboom2014nonparametric}). Consequently,
\[
    A(\rhat_1, \ell) \ \le\ A(\rhat_1, \ellhat_1) = \fhat_1^{or}(x_{0,1}) \ \le\ A(r, \ellhat_1)
    \qquad \text{for all } r \ge 0,\ \ell > 0.
\]
On $E_c$ the pair $(\rhat_1, \ellhat_1)$ is feasible for the restricted problem \eqref{eqn-local-fhat-def}, whence
\[
    \fhat_1^{or}(x_{0,1})
    \ \le\ \min_{c^{-\gamma}\le r \le c} A(r, \ellhat_1)
    \ \le\ \fhat_{1,c,\gamma}^{or}(x_{0,1})
    \ \le\ \max_{c^{-\gamma}\le \ell \le c} A(\rhat_1, \ell)
    \ \le\ \fhat_1^{or}(x_{0,1}),
\]
i.e., $\fhat_{1,c,\gamma}^{or}(x_{0,1}) = \fhat_1^{or}(x_{0,1})$ on $E_c$. Therefore, it is enough to prove that
    \[
        \lim_{c \rightarrow \infty}\limsup_{n \rightarrow \infty}\P\{\rhat_1 < c^{-\gamma} \text{ or } \ellhat_1 < c^{-\gamma}\} = 0.
    \]
    By symmetry, it suffices to rule out the event $\{\rhat_1 \le c^{-\gamma}\}$. Fix $0 < b < \gamma$, and write $n_{-1} = n^{\sum_{l \neq 1}\beta_l}$. Let $d_n=(n_1w_{n,1})^{-1}$ denote the scaled lattice spacing, so every feasible positive width belongs to $d_n\Z_{\ge1}$. On the event $\{\rhat_1 \le c^{-\gamma}\}$, the min--max formula gives, for all sufficiently large $n$,
    \begin{align}
        &\fhat_1^{or}(x_{0,1}) - f_1^*(x_{0,1}) \nonumber\\
        &\ge \max_{\substack{d_n \le \ell \le c^{-b}\\ \ell\in d_n\Z}}\left[\overline{f_1^*}\big|_{(x_{0,1} -\ell w_{n,1},\, x_{0,1}+\rhat_1w_{n,1}]}+\vepsbar\big|_{(x_{0,1} - \ell w_{n,1},\, x_{0,1}+\rhat_1w_{n,1}]}\right] - f_1^*(x_{0,1})\nonumber\\
        &\ge f_1^*(x_{0,1} - c^{-b}w_{n,1}) - f_1^*(x_{0,1}) + \max_{\substack{d_n \le \ell \le c^{-b}\\ \ell\in d_n\Z}}\vepsbar\big|_{(x_{0,1} - \ell w_{n,1},\, x_{0,1}+\rhat_1w_{n,1}]}\nonumber\\
        &\ge -C_1c^{-b}w_{n,1} + \max_{\substack{d_n \le \ell \le c^{-b}\\ \ell\in d_n\Z}}\vepsbar\big|_{(x_{0,1} - \ell w_{n,1},\, x_{0,1}+\rhat_1 w_{n,1}]}. \label{small-dev}
    \end{align}
    Decompose the stochastic term into its left and right contributions by defining
    \begin{align*}
        M_1(\ell) &=\f{n_1}{n}\sum_{i=1}^n  \veps_{i}\ind\{ x_{i,1} \in (x_{0,1} -\ell w_{n,1}, x_{0,1}] \},\\
        M_2 &=\f{n_1}{n}\sum_{i=1}^n  \veps_{i}\ind\{ x_{i,1} \in (x_{0,1}, x_{0,1}+\rhat_1 w_{n,1}] \}.
    \end{align*}
    First control the short right-hand contribution. Since $r \mapsto \sum_{i=1}^n \veps_{i}\ind\{ x_{i,1} \in (x_{0,1}, x_{0,1}+r w_{n,1}] \}$ is a martingale in $r$ with variance $\sigma^2$ times the number of design points in the window, Doob's maximal inequality (as in the proof of Lemma~\ref{error-control}) yields, on the event $\{\rhat_1 \le c^{-\gamma}\}$ and for sufficiently large $n$,
    \begin{align}\label{eqn-13}
        |M_2| &\le \f{1}{n_{-1}}\sup_{0 \le r \le c^{-\gamma}}\bigg|\sum_{i=1}^n  \veps_{i}\ind\{ x_{i,1} \in (x_{0,1}, x_{0,1}+r w_{n,1}] \}\bigg|\nonumber\\
        & = O_p(\sigma w_{n,1}^{2}n_1 c^{-\gamma/2}).
    \end{align}

    Next, control the left-hand fluctuation. Set $M_1(0)=0$ and extend $M_1(\ell)$ by linear interpolation between consecutive grid widths. Since $\beta_1>1/3$ implies $w_{n,1}=n^{-1/3}$, we have
    \[
        n_1^{-1}w_{n,1}^{-2}M_1(\ell)
        = w_{n,1}\sum_{i=1}^n  \veps_{i}\ind\{ x_{i,1} \in (x_{0,1} -\ell w_{n,1}, x_{0,1}] \},
    \]
    and Lemma~\ref{weak-convergence-lemma} yields
    \begin{align*}
        n_1^{-1}w_{n,1}^{-2} M_1 \rightsquigarrow \sigma B,
    \end{align*}
    where $B$ is a standard Brownian motion. Fix $\delta>0$ and choose $t_{\delta}>0$ such that $\P(Z > t_{\delta}) \ge 1/2 - \delta/4$, where $Z\sim N(0,1)$. Let $\rho = \sigma c^{-b/2}t_{\delta}$. Since $M_1(0)=0$, $M_1$ is linear on $[0,d_n]$, and $\rho>0$, adjoining $[0,d_n)$ does not change the event in the first line below. The continuous mapping theorem and continuity of the distribution of the Brownian maximum therefore give, for all sufficiently large $n$,
    \begin{align*}
        \P \left\{ \max_{d_n \le \ell \le c^{-b}}n_1^{-1}w_{n,1}^{-2} M_1(\ell) \le \rho \right\} &\le \P\left\{  \max_{0 \le \ell \le c^{-b} } \sigma B(\ell) \le \rho \right\}+\f{\delta}{2}\\
        & = 1- \P\left\{ \max_{0 \le \ell \le c^{-b}}B(\ell) >  c^{-b/2}t_{\delta}\right\}+\f{\delta}{2}\\
        &\stackrel{(a)}{=}1 - 2\P\left\{ B(1) >  t_{\delta}\right\}+\f{\delta}{2}\\
        & \le \delta,
    \end{align*}
    where $(a)$ is by the reflection principle. Hence, with probability at least $1-\delta$,
    \begin{align}
        \max_{d_n \le \ell \le c^{-b}}M_1(\ell) > n_1w_{n,1}^2 \rho.\label{eqn-32}
    \end{align}

    Finally, bound the number of coordinate levels in the averaging
    interval. For $\rhat_1\le c^{-\gamma}$ and every feasible
    $\ell\in[d_n,c^{-b}]\cap d_n\Z$, the half-open convention gives
    \begin{align}
        \big|\{x_1 \in \calX_1 : x_1 \in
        (x_{0,1} - \ell w_{n,1}, x_{0,1}+\rhat_1 w_{n,1}]\}\big|
        =(\ell+\rhat_1)n_1w_{n,1}
        \le(c^{-b}+c^{-\gamma})n_1w_{n,1}. \label{eqn-33}
    \end{align}
    Combining \eqref{small-dev}, \eqref{eqn-13}, \eqref{eqn-32}, and \eqref{eqn-33}, and using $0<b<\gamma$, we obtain on an event with probability at least $1-\delta$,
    \begin{align*}
        \fhat_1^{or}(x_{0,1}) - f_1^*(x_{0,1}) &\ge \f{n_1w_{n,1}^2 \sigma \bigl( c^{-b/2}t_{\delta} - C_2 c^{-\gamma/2} \bigr)}{(c^{-b}+c^{-\gamma})n_1w_{n,1}}-C_1c^{-b}w_{n,1}\\
        &\ge w_{n,1} C_{\delta, \sigma}\bigl( c^{b/2} - c^{-b} \bigr),
    \end{align*}
    where $C_{\delta,\sigma}>0$ does not depend on $c$ or $n$. Proposition~\ref{rate-cvgc} states that $\fhat_1^{or}(x_{0,1}) - f_1^*(x_{0,1}) = O_p(w_{n,1})$, so the above lower bound can hold only with vanishing probability as $c \to \infty$. Therefore,
    \[
        \lim_{c \rightarrow \infty}\limsup_{n \rightarrow \infty}\P\{\rhat_1 \le c^{-\gamma}\} = 0.
    \]
    The same argument applies to $\ellhat_1$.
\end{proof}

We now prove the joint independence of $\fhat_j^{or}$ for $j=1, \ldots, d$.
\begin{proof}[Proof of Theorem \ref{theorem-lattice}, asymptotic independence]
Recall from Assumption~\ref{fixed-design-assumption} that the fixed lattice design satisfies $\mathcal{X}=\prod_{l=1}^d \calX_l$, where $\calX_l=\{x_{1,l},\ldots,x_{n_l,l}\}\subset[0,1]$. For a design point $x_i=(x_{i,1},\ldots,x_{i,d}) \in \mathcal{X}$, write $x_{i,-j}=(x_{i,1},\ldots,x_{i,j-1},x_{i,j+1},\ldots,x_{i,d}) \in \R^{d-1}$. Given $c>0$ and a design point $x_0=(x_{0,1},\ldots,x_{0,d})\in\mathcal{X}$, define

\begin{align*}
    \mathcal{S}_{-j,c}
    &:= \Big\{x_{-j} \in \prod_{l\neq j}\calX_l :\ x_l \in [x_{0,l}-cw_{n,l},\,x_{0,l}+cw_{n,l}]\ \text{ for some } l \neq j\Big\}\subset \R^{d-1},\\
    \vepsbar_{x_j}^{(-j)}
    &:= \f{n_j}{n}\sum_{i=1}^n \veps_i \ind\{x_{i,j}=x_j,\; x_{i,-j}\notin \mathcal{S}_{-j,c}\}, \qquad x_j \in \calX_j,\\
    \Ytil_{x_j}^{(j)}
    &:= f_j^*(x_j)+\vepsbar_{x_j}^{(-j)}, \qquad x_j \in \calX_j.
\end{align*}

Thus, $\vepsbar_{x_j}^{(-j)}$ averages the noise in the $j$th strip after discarding every design point for which \emph{some} coordinate $l\neq j$ falls in its localization window $[x_{0,l}-cw_{n,l}, x_{0,l}+cw_{n,l}]$, and $\Ytil_{x_j}^{(j)}$ is the corresponding pseudo-response. Using $\{\Ytil_{x_j}^{(j)}\}_{x_j\in\calX_j}$, define the localized estimator obtained by removing this overlap region:
\begin{align}\label{eqn-local-ftil-def}
    \ftil_{j,c}(x_{0,j})=
    \begin{cases}
        \min\limits_{c^{-\gamma} \le r\le c}\max\limits_{c^{-\gamma} \le \ell \le c}\f{\sum_{x_j \in \calX_j}\Ytil_{x_{j}}^{(j)}\ind\{x_j \in (x_{0,j} - \ell w_{n,j} , x_{0,j}+rw_{n,j}] \}}{|\{ x_j \in \calX_j: x_j \in (x_{0,j} - \ell w_{n,j} , x_{0,j}+rw_{n,j}] \}|}, & \beta_j >1/3\\
        \min\limits_{0 \le r\le c}\max\limits_{0 < \ell \le c}\f{\sum_{x_j \in \calX_j}\Ytil_{x_{j}}^{(j)}\ind\{x_j \in (x_{0,j} - \ell w_{n,j} , x_{0,j}+rw_{n,j}]\}}{|\{ x_j \in \calX_j: x_j \in (x_{0,j} - \ell w_{n,j} , x_{0,j}+rw_{n,j}] \}|}, &\beta_j =1/3\\
        \Ytil_{x_{0,j}}^{(j)}, & \beta_j <1/3
    \end{cases}
\end{align}
Every error entering $\ftil_{j,c}(x_{0,j})$ satisfies $x_{i,l}\notin[x_{0,l}-cw_{n,l}, x_{0,l}+cw_{n,l}]$ for all $l\neq j$, whereas every error entering $\fhat_{l,c,\gamma}^{or}(x_{0,l})$ satisfies $x_{i,l}\in[x_{0,l}-cw_{n,l}, x_{0,l}+cw_{n,l}]$. The two collections of errors are therefore disjoint, and since the errors are independent, $\ftil_{j,c}(x_{0,j})$ is independent of the vector $\{\fhat_{l,c,\gamma}^{or}(x_{0,l})\}_{l \neq j}$.

Given a design point $x_0 \in \mathcal{X}$, define $\bff^*(x_0) = (f_1^*(x_{0,1}), \ldots, f_d^*(x_{0,d}))^T$ and $\hat{\bff}^{or}(x_0)  = (\fhat_{1}^{or}(x_{0,1}), \ldots,\fhat_{d}^{or}(x_{0,d}) )^T$. $\hat{\bff}_c^{or}(x_{0})$ and $\tilde{\bff}_c(x_0)$ can be defined in the same fashion. Furthermore, define
\begin{align*}
    \Delta_{n}^{or} &=  w_{n}^{-1}\odot(\hat{\bff}^{or}(x_{0}) -\bff^*(x_{0})),\\
    \Delta_{n,c}^{or} &= w_{n}^{-1} \odot (\hat{\bff}_c^{or}(x_0) - \bff^*(x_0)),\\
    \tilde{\Delta}_{n,c} &= w_{n}^{-1} \odot (\tilde{\bff}_c(x_0) - \bff^*(x_0)),
\end{align*}
where $w_{n}^{-1} = (w_{n,1}^{-1}, {\ldots}, w_{n,d}^{-1})$ and $\odot$ represents the elementwise product.

For each $j$, define the good event
\begin{equation}\label{good-event-H}
    H_j := \Big\{\fhat_j^{or}(x_{0,j}) = \fhat_{j,c,\gamma}^{or}(x_{0,j})\Big\}.
\end{equation}
By Proposition~\ref{small-localization-prop}, we have $\lim_{c \rightarrow \infty}\limsup_{n \rightarrow \infty}\P(H_j^c) = 0$.

Let $\Delta_{n,j}^{or}$, $\Delta_{n,c,j}^{or}$, and $\tilde{\Delta}_{n,c,j}$ denote the $j$th coordinates of $\Delta_{n}^{or}$, $\Delta_{n,c}^{or}$, and $\tilde{\Delta}_{n,c}$, respectively. We first prove joint asymptotic independence for $\Delta_{n,c}^{or}$. The result for $\Delta_n^{or}$ then follows because localization holds with high probability.

In order to show the asymptotic independence of $\Delta_{n,c}^{or}$, it suffices to show that for any $t \in \R^d$,
\begin{align}\label{joint-indep-eqn1}
    \lim_{n \rightarrow \infty} \bigg|\E \exp(i t^T \Delta_{n, c}^{or}) - \prod_{k=1}^d\E \exp(it_k\Delta_{n,c, k}^{or})\bigg| = 0.
\end{align}

For $m=1,\ldots,d$, define
\[
    A_{n,c}^{(m)}(t)
    := \bigg| \E\exp\Big(i\sum_{l=m}^d t_l\Delta_{n,c,l}^{or}\Big) - \prod_{l=m}^d\E\exp\bigl(it_l\Delta_{n,c,l}^{or}\bigr) \bigg|,
\]
with the convention $A_{n,c}^{(d+1)}(t)=0$. For each $m$, add and subtract the term with $\tilde{\Delta}_{n,c,m}$. Since $\tilde{\Delta}_{n,c,m}$ is independent of $\{\Delta_{n,c,l}^{or}\}_{l>m}$, and since $\E|e^{itX}-e^{itY}| \le |t|\E|X-Y|$ for any real-valued random variables $X$ and $Y$, we obtain
\[
    A_{n,c}^{(m)}(t)
    \le 2|t_m|\E\big|\Delta_{n,c,m}^{or}-\tilde{\Delta}_{n,c,m}\big| + A_{n,c}^{(m+1)}(t).
\]
Iterating this bound from $m=1$ to $m=d$ gives
\begin{align}\label{eqn-3}
    \bigg| \E\Big[ \exp\bigl(it^T\Delta_{n,c}^{or} \bigr) \Big] - \prod_{j=1}^d\E\Big[\exp\bigl( it_j\Delta_{n,c,j}^{or} \bigr)\Big] \bigg| \le 2\sum_{j=1}^d |t_j|\E\big|\Delta_{n,c,j}^{or} - \tilde{\Delta}_{n,c,j}\big|.
\end{align}

For $l = 1,\ldots,d$, let $N_l := \big|[x_{0,l}-cw_{n,l},\, x_{0,l}+cw_{n,l}] \cap \calX_l\big|$ be the number of design points of $\calX_l$ in the $l$th localization window, and let
\[
    m_{n,c,j} := \Big|\Big\{x \in \calX :\ x_j \in [x_{0,j}-cw_{n,j},\, x_{0,j}+cw_{n,j}],\ x_{-j} \in \mathcal{S}_{-j,c}\Big\}\Big|
\]
be the number of design points discarded from the $j$th localization slab. Since $\calX_l$ has spacing $1/n_l$ and $x_{0,l}\in\calX_l$, we have $1 \le N_l \le 2cw_{n,l}n_l + 1 \le 4c\, n_l \rho_l$ for $c \ge 1$, where
\[
    \rho_l := w_{n,l} \vee n_l^{-1} = n^{-(\beta_l \wedge 1/3)}
 \]
by \eqref{rate-cvgc-def}. A union bound over the coordinate $l\neq j$ responsible for membership in $\mathcal{S}_{-j,c}$ gives
\begin{align}\label{eqn-14}
    m_{n,c,j} \le \sum_{l \neq j} N_j N_l \prod_{k \neq j,l} n_k
    \le 16c^2\, n\, \rho_j \sum_{l \neq j} \rho_l
    \le C_{c,d}\, n\, \rho_j\, n^{-\kappa_j},
    \qquad \kappa_j := \min_{l \neq j}(\beta_l \wedge 1/3) > 0.
\end{align}

Let $\rhat_{j,c}$ and $\ellhat_{j,c}$ be the scaled minimizing and maximizing widths in \eqref{eqn-local-fhat-def}, and let $\widetilde r_{j,c}$ and $\widetilde\ell_{j,c}$ be the corresponding widths in \eqref{eqn-local-ftil-def}. Let $\ell_{j,+}$ be a maximizer of the objective in \eqref{eqn-local-fhat-def} at $r=\widetilde r_{j,c}$, and let $\ell_{j,-}$ be a maximizer of the objective in \eqref{eqn-local-ftil-def} at $r=\rhat_{j,c}$. Define

\begin{align*}
    \mathcal{I}_{j,+}
    &:= \{x_j \in \calX_j : x_j \in (x_{0,j} - \ell_{j,+}w_{n,j},\, x_{0,j}+\widetilde r_{j,c}w_{n,j}]\},\\
    \mathcal{I}_{j,-}
    &:= \{x_j \in \calX_j : x_j \in (x_{0,j} - \ell_{j,-}w_{n,j},\, x_{0,j}+\rhat_{j,c}w_{n,j}]\}.
\end{align*}

Writing $n_{-j}:=n/n_j=\prod_{l\neq j} n_l$, we have $n_j/n = 1/n_{-j}$. Since $\widetilde r_{j,c}$ is feasible for the outer minimization in \eqref{eqn-local-fhat-def} and $\ell_{j,+}$ attains the corresponding inner maximum, and since $\ftil_{j,c}(x_{0,j})$ is bounded below by the objective of \eqref{eqn-local-ftil-def} evaluated at $(\widetilde r_{j,c}, \ell_{j,+})$, we obtain
\begin{align*}
    \fhat_{j,c,\gamma}^{or}(x_{0,j}) - \ftil_{j,c}(x_{0,j})
    &\le \f{\sum_{x_j \in \mathcal{I}_{j,+}}\bigl(\vepsbar_{x_j}^{(j)}-\vepsbar_{x_j}^{(-j)}\bigr)}{|\mathcal{I}_{j,+}|}\\
    &= \f{\sum_{i=1}^n\veps_i\ind\{x_{i,j} \in \mathcal{I}_{j,+},\, x_{i,-j} \in \mathcal{S}_{-j,c}\}}{n_{-j}|\mathcal{I}_{j,+}|}
\end{align*}
and, symmetrically, evaluating \eqref{eqn-local-fhat-def} at $(\rhat_{j,c}, \ell_{j,-})$ and using that $\ell_{j,-}$ attains the inner maximum in \eqref{eqn-local-ftil-def} at $r=\rhat_{j,c}$,
\begin{align}\label{neg-lower-bound}
    \fhat_{j,c,\gamma}^{or}(x_{0,j}) - \ftil_{j,c}(x_{0,j})
    \ge \f{ \sum_{i=1}^n\veps_i\ind\{ x_{i,j} \in \mathcal{I}_{j,-},\, x_{i,-j} \in \mathcal{S}_{-j,c} \} }{n_{-j}|\mathcal{I}_{j,-}|}.
\end{align}
We now show that $\E|\Delta_{n,c,j}^{or} - \tilde{\Delta}_{n,c,j}| \to 0$ as $n \rightarrow \infty$. We begin with the positive part.
For $\beta_j\ge 1/3$ we have
\begin{align*}
    \E\left(\Delta_{n,c,j}^{or} - \tilde{\Delta}_{n,c,j}\right)_+
    \le w_{n,j}^{-1} \E\left(\f{\left|\sum_{i=1}^n\veps_i\ind\{ x_{i,j} \in \mathcal{I}_{j,+},\, x_{i,-j} \in \mathcal{S}_{-j,c} \}\right|}{n_{-j}|\mathcal{I}_{j,+}|}\right).
\end{align*}
Since $|\mathcal{I}_{j,+}| \ge \lfloor c^{-\gamma}w_{n,j}n_j \rfloor$ for $\beta_j >1/3$ by the definition of $\widetilde r_{j,c}$, and $|\mathcal{I}_{j,+}|\ge 1$ for $\beta_j = 1/3$, we obtain
\begin{align}
    \E\left(\Delta_{n,c,j}^{or} - \tilde{\Delta}_{n,c,j}\right)_+
    &\le \f{C_{c, \gamma}w_{n,j}^{-2}}{n_{-j}n_j}\E\bigg|\sum_{i=1}^n\veps_i \ind\{ x_{i,j} \in \mathcal{I}_{j,+},\, x_{i,-j} \in \mathcal{S}_{-j,c}\}\bigg| \nonumber\\
    &\le \f{C_{c, \gamma}w_{n,j}^{-2}}{n}\E \bigg[ \max_{1 \le m \le m_{n,c,j}}\Big|\sum_{l=1}^m\veps_l \Big| \bigg]\nonumber\\
    &\le C_{c, d, \gamma}\sigma n^{-\kappa_j/2},\label{eqn-15}
\end{align}
where $C_{c, \gamma}$ and $C_{c, d, \gamma}$ are constants. For the second inequality, enumerate the discarded design points strip by strip, moving away from $x_{0,j}$ separately to the right and to the left of $x_{0,j}$. Each one-sided sum is then a prefix sum of a fixed enumeration of at most $m_{n,c,j}$ error terms, and the resulting factor $2$ is absorbed into the constant. The last inequality follows from Doob's inequality together with \eqref{eqn-14} and $\rho_j = n^{-1/3}$ for $\beta_j \ge 1/3$, since $w_{n,j}^{-2}n^{-1}\sqrt{C_{c,d}\,n\,\rho_j\,n^{-\kappa_j}} = C_{c,d}^{1/2}\, n^{2/3-1+1/2-1/6-\kappa_j/2} = C_{c,d}^{1/2}\, n^{-\kappa_j/2}$. When $\beta_j < 1/3$, both estimators average the single strip at $x_{0,j}$, and the number of discarded points in this strip is $|\mathcal{S}_{-j,c}| \le \sum_{l\neq j}N_l\prod_{k\neq j,l}n_k \le 4c\,n_{-j}\sum_{l\neq j}\rho_l \le C_{c,d}\,n_{-j}\,n^{-\kappa_j}$. Hence, we have
\begin{align}
    \E\left(\Delta_{n,c,j}^{or} - \tilde{\Delta}_{n,c,j}\right)_+ &\le \f{w_{n,j}^{-1}}{n_{-j}}\E \bigg|\sum_{i=1}^n\veps_i\ind\{  x_{i,j} = x_{0,j},\, x_{i,-j} \in \mathcal{S}_{-j,c} \}\bigg|\nonumber\\
    & \le \f{\sigma w_{n,j}^{-1}\sqrt{|\mathcal{S}_{-j,c}|}}{n_{-j}}\nonumber\\
    & \le C_{c,d}\,\sigma\, \f{w_{n,j}^{-1}}{\sqrt{n_{-j}}}\, n^{-\kappa_j/2}\nonumber\\
    &= C_{c,d}\,\sigma\, n^{-\kappa_j/2},\label{eqn-16}
\end{align}
where the last equality uses $w_{n,j}^{-1}=n^{(1-\beta_j)/2}=\sqrt{n_{-j}}$ for $\beta_j<1/3$. In both cases the bound goes to $0$ as $n \rightarrow \infty$.
The lower bound \eqref{neg-lower-bound} is handled in the same way, so \eqref{eqn-15} and \eqref{eqn-16} imply
\begin{align*}
    \E|\Delta_{n,c,j}^{or} - \tilde{\Delta}_{n,c,j}| \rightarrow  0, \quad \text{as } n \rightarrow \infty.
\end{align*}
Combined with \eqref{eqn-3}, we have
\begin{align}\label{eqn-5}
    \bigg| \E\Big[ \exp\bigl(it^T\Delta_{n,c}^{or} \bigr) \Big] - \prod_{j=1}^d\E\Big[\exp\bigl( it_j\Delta_{n,c,j}^{or} \bigr)\Big] \bigg| \rightarrow 0.
\end{align}

We now transfer \eqref{eqn-5} from the localized oracle estimators to the original oracle estimators. Since $\Delta_{n}^{or}=\Delta_{n,c}^{or}$ on the event $\cap_{j=1}^d H_j$ in \eqref{good-event-H}, it is enough to control the marginal characteristic functions and the joint characteristic function outside this event.

First, for each $j$,
\[
    \Big|\E\exp\bigl(it_j\Delta_{n,c,j}^{or}\bigr)-\E\exp\bigl(it_j\Delta_{n,j}^{or}\bigr)\Big|
    \le 2\P(H_j^c).
\]
Applying this bound iteratively yields
\begin{align*}
    \bigg|\prod_{j=1}^d\E\Big[\exp\bigl( it_j\Delta_{n,c,j}^{or} \bigr)\Big]  - \prod_{j=1}^d\E\Big[\exp\bigl( it_j\Delta_{n,j}^{or} \bigr)\Big]  \bigg| \le 2\sum_{j=1}^d \P(H_j^c).
\end{align*}
Let $\zeta =  \exp\bigl(it^T\Delta_{n}^{or} \bigr) -\exp\bigl(it^T\Delta_{n,c}^{or} \bigr)$. Since $|\zeta| \le 2$, we have
\begin{align*}
    \left| \E\zeta\right| &\le \left| \E\big[ \zeta \ind\{\cap_{j=1}^d H_j\}\big]  \right| + \left| \E\big[\zeta \ind\{\cup_{j=1}^d H_j^c\}\big]  \right| \\
    &\le 2\sum_{j=1}^d \P(H_j^c).
\end{align*}
Combining the above inequalities, we obtain
\begin{align}
    & \bigg| \E\Big[ \exp\bigl(it^T\Delta_{n}^{or} \bigr) \Big] - \prod_{j=1}^d\E\Big[\exp\bigl( it_j\Delta_{n,j}^{or} \bigr)\Big] \bigg|\nonumber\\
     &\le | \E\zeta|+\Big|\E\Big[ \exp\bigl(it^T\Delta_{n,c}^{or} \bigr)\Big]- \prod_{j=1}^d\E\Big[\exp\bigl( it_j\Delta_{n,c,j}^{or} \bigr)\Big]\Big|\nonumber\\
     &\quad +\bigg|\prod_{j=1}^d\E\Big[\exp\bigl( it_j\Delta_{n,c,j}^{or} \bigr)\Big]  - \prod_{j=1}^d\E\Big[\exp\bigl( it_j\Delta_{n,j}^{or} \bigr)\Big]  \bigg|\nonumber\\
     &\le \bigg|\E\Big[ \exp\bigl(it^T\Delta_{n,c}^{or} \bigr) \Big] - \prod_{j=1}^d\E\Big[\exp\bigl( it_j\Delta_{n,c,j}^{or} \bigr)\Big]\bigg| + 4\sum_{j=1}^d \P(H_j^c).\label{eqn-17}
\end{align}
Given $\varepsilon>0$, choose $c$ large enough that $\limsup_{n \rightarrow \infty}\sum_{j=1}^d\P(H_j^c)<\varepsilon$. For this fixed $c$, let $n \to \infty$ in \eqref{eqn-17} and use \eqref{eqn-5}. We obtain
\begin{align*}
    \limsup_{n \rightarrow \infty}\bigg| \E\Big[ \exp\bigl(it^T\Delta_{n}^{or} \bigr) \Big] - \prod_{j=1}^d\E\Big[\exp\bigl( it_j\Delta_{n,j}^{or} \bigr)\Big] \bigg| \le 4\varepsilon.
\end{align*}
Since $\varepsilon>0$ is arbitrary,
\begin{align*}
    \lim_{n \rightarrow \infty}\bigg| \E\Big[ \exp\bigl(it^T\Delta_{n}^{or} \bigr) \Big] - \prod_{j=1}^d\E\Big[\exp\bigl( it_j\Delta_{n,j}^{or} \bigr)\Big] \bigg| = 0.
\end{align*}
Let $D_1,\ldots,D_d$ be mutually independent random variables with
$D_j\stackrel{d}{=}\mathbb{D}_{\beta_j,f_j^{*\prime}(x_{0,j})}$. The three
marginal convergence results proved above give
$\Delta_{n,j}^{or}\rightsquigarrow D_j$ for every $j$. Hence,
$\prod_{j=1}^d\E\exp(it_j\Delta_{n,j}^{or})$ converges to
$\prod_{j=1}^d\E\exp(it_jD_j)$, which is the characteristic function of
$(D_1,\ldots,D_d)^T$. The preceding display and L\'evy's continuity theorem
therefore yield the joint convergence of $\Delta_n^{or}$. Finally,
$w_{n,j}^{-1}\{\fhat_j(x_{0,j})-\fhat_j^{or}(x_{0,j})\}
=O_p(w_{n,j}^{-1}n^{-1/2})=o_p(1)$ for every $j$. Thus, the same joint
limit holds for the vector of componentwise LSEs. This completes the proof
of Theorem~\ref{theorem-lattice}.
\end{proof}

\section{Proof of Theorem \ref{inference-thm} and Proposition \ref{failure-inference}}\label{pivot-limit-section}
We will prove the pivotal limiting distribution for $j=1$ and the proof for $j\neq 1$ follows in a similar fashion.

\subsection{Proof of Theorem \ref{inference-thm}}
\begin{proof}
We prove the case $j=1$. The argument has three steps: prove joint weak convergence of the oracle estimator and the scaled strip widths, translate that convergence into a limit for the block size, and then identify the limiting law by Brownian scaling.

Let $x_0 = (x_{0,1},\ldots, x_{0,d})^T \in \mathcal{X}$ be a design point.

Recall the processes $Q_n$ and $Q$ from
\eqref{two-sided-Qn} and \eqref{two-sided-Q-conv}. For $r\ge 0$ and $\ell>0$,
with the finite-sample arguments restricted to the natural domain of $Q_n$,
define the corresponding two-sided slope functions by
\begin{align}
    \U_n(r,\ell)
    &:=\f{Q_n(r)-Q_n(-\ell)}{r+\ell},
    \label{def-Un-inference}\\
    \U(r,\ell)
    &:=\f{Q(r)-Q(-\ell)}{r+\ell}.
    \label{def-U-inference}
\end{align}
By \eqref{two-sided-Qn} and \eqref{f-oracle},

\[
    T_n:=w_{n,1}^{-1}\big(\fhat_1^{or}(x_{0,1})- f_1^*(x_{0,1})\big)
    = \min_{r \ge 0}\max_{\ell > 0}\U_n(r,\ell).
\]

For brevity, write $(\rhat,\ellhat):=(\rhat_1,\ellhat_1)$ for the scaled
widths introduced after \eqref{f-oracle}.
Applying the finite-sample saddle-point property used in the proof of
Proposition~\ref{small-localization-prop} to $\U_n$ gives
\[
    T_n=\max_{\ell>0}\min_{r\ge0}\U_n(r,\ell)
    =\U_n(\rhat,\ellhat)
\]
and the corresponding optimality inequalities
\[
    \U_n(\rhat,\ell)\le T_n\le\U_n(r,\ellhat),
    \qquad r\ge0,\quad \ell>0.
\]
For $c>0$, define

\[
    H_c := \{0 \le \rhat \le c,\ 0 < \ellhat \le c\}.
\]

By Proposition~\ref{large-localization-prop},
\begin{equation}\label{Hc-localization}
    \lim_{c\to\infty}\limsup_{n\to\infty}\P(H_c^c)=0.
\end{equation}

For fixed $c>0$, let $0\le r_-\le r_+\le c$ and
$0\le\ell_-\le\ell_+\le c$, with $\ell_+>0$, and define
\begin{align*}
    \mathcal V_n(r_-,r_+;\ell_-,\ell_+)
    &:=
    \inf_{r_-\le r\le r_+}
    \sup_{\substack{\ell_-\le\ell\le\ell_+\\\ell>0}}\U_n(r,\ell),\\
    \mathcal V(r_-,r_+;\ell_-,\ell_+)
    &:=
    \inf_{r_-\le r\le r_+}
    \sup_{\substack{\ell_-\le\ell\le\ell_+\\\ell>0}}\U(r,\ell),
\end{align*}
and
\begin{align*}
    T_{n,c}&:=\mathcal V_n(0,c;0,c),
    &T_c&:=\mathcal V(0,c;0,c).
\end{align*}
For compact intervals $I,J\subset[0,c]$, with
$J\cap(0,c]\neq\varnothing$, and $t\in\R$, define
\[
    \Gamma_{n,I,J}(t)
    :=
    \inf_{r\in I}\{Q_n(r)-tr\}
    -
    \inf_{\substack{\ell\in J\\\ell>0}}\{Q_n(-\ell)+t\ell\}.
\]
Let $\Gamma_{I,J}(t)$ be defined analogously with $Q$ in place of $Q_n$.
Writing $I=[r_-,r_+]$ and $J=[\ell_-,\ell_+]$, apply
Lemma~\ref{lm-switch} with $u=r\in I$ and $v=-\ell$, where
$\ell\in J\cap(0,c]$. This gives the following event identities.
\begin{align*}
    \{\mathcal V_n(r_-,r_+;\ell_-,\ell_+)\le t\}
    &=
    \{\Gamma_{n,I,J}(t)\le0\},\\
    \{\mathcal V(r_-,r_+;\ell_-,\ell_+)\le t\}
    &=
    \{\Gamma_{I,J}(t)\le0\}.
\end{align*}
For fixed $0<r_0,\ell_0\le c$ and arbitrary
$(t_1,t_2,t_3)\in\R^3$, apply this event identity to the three
interval pairs
\begin{align*}
    (I_1,J_1)&=([0,r_0],[0,c]),
    &
    \{\mathcal V_n(0,r_0;0,c)\le t_1\}
    &=
    \{\Gamma_{n,I_1,J_1}(t_1)\le0\},\\
    (I_2,J_2)&=([0,c],[0,\ell_0]),
    &
    \{\mathcal V_n(0,c;0,\ell_0)\le t_2\}
    &=
    \{\Gamma_{n,I_2,J_2}(t_2)\le0\},\\
    (I_3,J_3)&=([0,c],[0,c]),
    &
    \{T_{n,c}\le t_3\}
    &=
    \{\Gamma_{n,I_3,J_3}(t_3)\le0\}.
\end{align*}
The corresponding limiting identities hold with $\mathcal V_n$,
$\Gamma_{n,I_k,J_k}$, and $T_{n,c}$ replaced by $\mathcal V$,
$\Gamma_{I_k,J_k}$, and $T_c$, respectively.
The map from $Q_n$ to
$(\Gamma_{n,I_k,J_k}(t_k))_{k=1}^3$ is Lipschitz under the uniform
norm. Consequently, \eqref{two-sided-Q-conv} and the continuous mapping
theorem give
\[
    \big(\Gamma_{n,I_k,J_k}(t_k)\big)_{k=1}^3
    \rightsquigarrow
    \big(\Gamma_{I_k,J_k}(t_k)\big)_{k=1}^3.
\]
By \eqref{two-sided-Q-conv}, $W$ is a two-sided Brownian motion with
independent left and right halves. For each $k$, the two infima defining
$\Gamma_{I_k,J_k}(t_k)$ are independent, and each has a continuous
distribution as the minimum of Brownian motion plus a continuously
differentiable drift on a nondegenerate compact interval
\citep{morters2010brownian}. Their difference therefore has a continuous
distribution, so
\[
    \P\{\Gamma_{I_k,J_k}(t_k)=0\}=0,\qquad k=1,2,3.
\]
Taking intersections of these three event identities, the joint
lower-orthant event is
\[
    \left\{
      \big(\Gamma_{n,I_k,J_k}(t_k)\big)_{k=1}^3
      \in(-\infty,0]^3
    \right\}.
\]
For the limiting vector,
\[
    \left\{
      \big(\Gamma_{I_k,J_k}(t_k)\big)_{k=1}^3
      \in\partial(-\infty,0]^3
    \right\}
    \subseteq
    \bigcup_{k=1}^3\{\Gamma_{I_k,J_k}(t_k)=0\},
\]
and the event on the right has probability zero. Thus $(-\infty,0]^3$ is
a continuity set for the limiting vector. Therefore, for every fixed $c$ and
$r_0,\ell_0>0$, the Portmanteau theorem and the
event identities give
\begin{align*}
    \begin{pmatrix}
        \mathcal V_n(0,r_0;0,c)\\
        \mathcal V_n(0,c;0,\ell_0)\\
        T_{n,c}
    \end{pmatrix}
    \rightsquigarrow
    \begin{pmatrix}
        \mathcal V(0,r_0;0,c)\\
        \mathcal V(0,c;0,\ell_0)\\
        T_c
    \end{pmatrix}.
\end{align*}

For $0<\epsilon<r_0\wedge\ell_0$, applying the same simultaneous switch argument with
$I_1=[r_0-\epsilon,c]$ and
$J_2=[\ell_0-\epsilon,c]$ also yields
\[
    \begin{pmatrix}
        \mathcal V_n(r_0-\epsilon,c;0,c)\\
        \mathcal V_n(0,c;\ell_0-\epsilon,c)\\
        T_{n,c}
    \end{pmatrix}
    \rightsquigarrow
    \begin{pmatrix}
        \mathcal V(r_0-\epsilon,c;0,c)\\
        \mathcal V(0,c;\ell_0-\epsilon,c)\\
        T_c
    \end{pmatrix}.
\]
Let $r_c^*$ and $\ell_c^*$ denote the almost surely unique minimizer and
maximizer in the definition of $T_c$. On the event $H_c$, the
unrestricted finite-sample optimizing pair is feasible for the truncated
problem, so $T_n=T_{n,c}$. Furthermore, the optimality inequalities and
their contrapositives yield
\[
\begin{aligned}
    \rhat\le r_0
        &\Longrightarrow \mathcal V_n(0,r_0;0,c)=T_{n,c},\\
    \ellhat\le \ell_0
        &\Longrightarrow \mathcal V_n(0,c;0,\ell_0)=T_{n,c},\\
    \mathcal V_n(r_0-\epsilon,c;0,c)>T_{n,c}
        &\Longrightarrow \rhat<r_0-\epsilon,\\
    \mathcal V_n(0,c;\ell_0-\epsilon,c)<T_{n,c}
        &\Longrightarrow \ellhat<\ell_0-\epsilon.
\end{aligned}
\]
For the limiting problem, the saddle-point property, continuity on the
compact domains, and uniqueness yield
\[
\begin{aligned}
    r_c^*\le r_0
        &\Longleftrightarrow \mathcal V(0,r_0;0,c)=T_c,\\
    \ell_c^*\le \ell_0
        &\Longleftrightarrow \mathcal V(0,c;0,\ell_0)=T_c,\\
    r_c^*<r_0-\epsilon
        &\Longleftrightarrow
          \mathcal V(r_0-\epsilon,c;0,c)>T_c,\\
    \ell_c^*<\ell_0-\epsilon
        &\Longleftrightarrow
          \mathcal V(0,c;\ell_0-\epsilon,c)<T_c.
\end{aligned}
\]
Define
\[
\begin{aligned}
    E_n
    &:=
      \{T_n\le t,\ \rhat\le r_0,\ \ellhat\le \ell_0\},\\
    F_{n,c}
    &:=
      \{T_{n,c}\le t,\ \mathcal V_n(0,r_0;0,c)=T_{n,c},\
                         \mathcal V_n(0,c;0,\ell_0)=T_{n,c}\},\\
    O_{n,c}
    &:=
      \{T_{n,c}<t-\epsilon,\
        \mathcal V_n(r_0-\epsilon,c;0,c)>T_{n,c},\
        \mathcal V_n(0,c;\ell_0-\epsilon,c)<T_{n,c}\}.
\end{aligned}
\]
The preceding implications yield the finite-sample inclusions
\[
    E_n\cap H_c\subseteq F_{n,c},
    \qquad
    O_{n,c}\cap H_c\subseteq E_n.
\]
Consequently,
\[
    \P(E_n)\le\P(F_{n,c})+\P(H_c^c),
    \qquad
    \P(E_n)\ge\P(O_{n,c})-\P(H_c^c).
\]

The Portmanteau theorem applied to the two localized triples, together with
the limiting equivalences, yields
\begin{align}
    \limsup_{n\to\infty}\P(E_n)
    &\le
      \P\{T_c\le t,\ r_c^*\le r_0,\ \ell_c^*\le \ell_0\}
      +\limsup_{n\to\infty}\P(H_c^c),                 \label{upper-portmanteau-bound}\\
    \liminf_{n\to\infty}\P(E_n)
    &\ge
      \P\{T_c<t-\epsilon,\ r_c^*<r_0-\epsilon,\
                         \ell_c^*<\ell_0-\epsilon\}
      -\limsup_{n\to\infty}\P(H_c^c).                 \label{lower-portmanteau-bound}
\end{align}

Let
\[
    T:=\min_{r\ge0}\max_{\ell>0}\U(r,\ell)
\]
and let $r^*$ and $\ell^*$ be its almost surely unique minimizer and
maximizer respectively. On $\{r^*\le c,\ \ell^*\le c\}$,
uniqueness gives
\[
    (T_c,r_c^*,\ell_c^*)=(T,r^*,\ell^*).
\]
The unrestricted endpoints are finite almost surely, so the probability of
this event tends to one as $c\to\infty$.
In view of \eqref{Hc-localization}, letting $c\to\infty$ and then $\epsilon\downarrow0$ in \eqref{upper-portmanteau-bound} and \eqref{lower-portmanteau-bound} yields the respective limits at every continuity point $(t,r_0,\ell_0)$ of the joint distribution function:
\begin{align}
    \limsup_{n\to\infty}
    \P\{T_n\le t,\rhat\le r_0,\ellhat\le \ell_0\}
    &\le \P\{T\le t,r^*\le r_0,\ell^*\le \ell_0\},\\
    \liminf_{n\to\infty}
    \P\{T_n\le t,\rhat\le r_0,\ellhat\le \ell_0\}
    &\ge \P\{T\le t,r^*\le r_0,\ell^*\le \ell_0\},
\end{align}
Consequently,
\begin{align}\label{joint-conv-Chernoff}
    \begin{pmatrix}
        T_n\\
        \rhat\\
        \ellhat
    \end{pmatrix}
    \rightsquigarrow
    \begin{pmatrix}
        T\\
        r^*\\
        \ell^*
    \end{pmatrix}.
\end{align}

Since $\fhat_1=\fhat_1^{or}-\Ybar$, the two estimators have the same fitted
block. By the half-open convention in \eqref{n_u_v-def} and the equally
spaced product design,
\[
    n_{\rhat,\ellhat}^1
    =(\rhat+\ellhat)w_{n,1}n
    =(\rhat+\ellhat)n^{2/3}.
\]
Moreover,
$\sqrt{n_{\rhat,\ellhat}^1}\,\Ybar
=O_p(n^{1/3})O_p(n^{-1/2})=o_p(1)$, so continuous mapping gives
\begin{align*}
    \sqrt{n_{\rhat,\ellhat}^1}
    \big(\fhat_1(x_{0,1})-f_1^*(x_{0,1})\big)
    &=
    \sqrt{\rhat+\ellhat}\,T_n+o_p(1)\\
    &\rightsquigarrow
    \sqrt{r^*+\ell^*}\,T.
\end{align*}

Let
\[
    s_1 = \left(\f{f_1^{*\prime}(x_{0,1})}{2\sigma}\right)^{2/3}
    \qquad\text{and}\qquad
    s_2 = \left(\f{\sigma^2f_1^{*\prime}(x_{0,1})}{2}\right)^{1/3},
\]
and define $g=s_1 r$ and $h=s_1\ell$. By Brownian scaling,
\begin{align*}
    T
    &= \min_{r \ge 0} \max_{\ell >0}\U(r,\ell)\\
    &= \min_{g \ge 0} \max_{h >0}\f{\sigma s_1 \left(W(g/s_1) - W(-h/s_1)\right)}{g + h}+\f{f_1^{*\prime}(x_{0,1})(g-h)}{2s_1}\\
    &\stackrel{d}{=} \min_{g \ge 0} \max_{h >0}\f{\sigma \sqrt{s_1}\left(W(g) - W(-h)\right)}{g + h}+\f{f_1^{*\prime}(x_{0,1})(g-h)}{2s_1}\\
    &= s_2 \min_{g \ge 0} \max_{h >0}\left\{\f{W(g) - W(-h)}{g + h}+g-h\right\}.
\end{align*}
If $(g^*,h^*)=(s_1 r^*,s_1\ell^*)$, then $s_1(r^*+\ell^*)=g^*+h^*$. Therefore,
\begin{align*}
    \sqrt{r^*+\ell^*}\,T
    &\stackrel{d}{=} \f{s_2}{\sqrt{s_1}}\sqrt{g^*+h^*}\min_{g \ge 0} \max_{h >0}\left\{\f{W(g) - W(-h)}{g + h}+g-h\right\}\\
    &= \sigma \L_1,
\end{align*}
because $s_2/\sqrt{s_1}=\sigma$. This is the claimed pivotal limit.
\end{proof}
\subsection{Proof of Proposition \ref{failure-inference}}
\begin{proof}

We focus on the case $j=1$. The proof follows the argument for Theorem~\ref{inference-thm}, except that we must show that the limiting distribution varies with the local derivative $a:=f_1^{*\prime}(x_{0,1})$. For $r\in\Z_{\ge0}$ and $\ell\in\Z_{\ge1}$, define
\[
    \U_a(r,\ell):=
    \f{\sigma S(r)-\sigma S(-\ell)}{r+\ell}
    +\f{a(r-\ell+1)}{2}.
\]
Then, by \eqref{gcm-rw-def},
\[
    \mathbb{G}_a
    =\inf_{r\in\Z_{\ge0}}\sup_{\ell\in\Z_{\ge1}}\U_a(r,\ell).
\]

We first verify that the saddle pair exists and is almost surely unique. For each fixed $r$, the strong law of large numbers gives $S(-\ell)/\ell\to0$, whence $\U_a(r,\ell)\to-\infty$ as $\ell\to\infty$. Thus, the inner supremum is attained. Moreover,
\[
    \max_{\ell\ge1}\U_a(r,\ell)
    \ge\U_a(r,1)\longrightarrow\infty
    \qquad\text{as }r\to\infty
\]
almost surely, so the outer infimum is also attained. Applying the finite isotonic min--max characterization on a sufficiently large truncation yields a saddle pair on the full lattice $\Z_{\ge0}\times\Z_{\ge1}$.

For any distinct pairs $(r,\ell)\ne(\widetilde r,\widetilde\ell)$, the difference $\U_a(r,\ell)-\U_a(\widetilde r,\widetilde\ell)$ is a non-degenerate Gaussian random variable with non-zero variance:
\[
    \Var\{\U_a(r,\ell)-\U_a(\widetilde r,\widetilde\ell)\}>0.
\]
Consequently, $\P\{\U_a(r,\ell)=\U_a(\widetilde r,\widetilde\ell)\}=0$. Taking a union bound over all countably many pairs shows that the optimizing saddle pair is almost surely unique. We denote it by $(r_a^*,\ell_a^*)$.

Recall the finite-sample slope function $\U_n$ from \eqref{def-Un-inference}, and write $(\rhat,\ellhat):=(\rhat_1,\ellhat_1)$. Under the critical scaling $\beta_1=1/3$, we have $w_{n,1}=n^{-1/3}$, and the integer block widths lie in $\Z_{\ge0}\times\Z_{\ge1}$. As in the proof of Theorem~\ref{inference-thm}, set
\[
    T_n
    :=n^{1/3}\{\fhat_1^{or}(x_{0,1})-f_1^*(x_{0,1})\}
    =\min_{r\ge0}\max_{\ell>0}\U_n(r,\ell),
\]
where the optimization is restricted to the natural domain of $\U_n$.

For a fixed integer $c\ge1$, define the truncated analogues
\[
\begin{aligned}
    T_{n,c}
    &:=n^{1/3}\{\fhat_{1,c}^{or}(x_{0,1})-f_1^*(x_{0,1})\}\\
    &=\min_{r\in[0,c]\cap\Z}
      \max_{\ell\in[1,c]\cap\Z}\U_n(r,\ell),\\
    \mathbb{G}_{a,c}
    &:=\min_{r\in[0,c]\cap\Z}
      \max_{\ell\in[1,c]\cap\Z}\U_a(r,\ell).
\end{aligned}
\]
Let $\rhat_c$ be the largest right width attaining the outer minimum in the definition of $T_{n,c}$. Given $\rhat_c$, let $\ellhat_c$ be the largest left width attaining the inner maximum at $r=\rhat_c$. This deterministic selection rule is used only to resolve finite-sample ties. Let $(r_{a,c}^*,\ell_{a,c}^*)$ be the unique optimizing pair for $\mathbb{G}_{a,c}$.

The finite-dimensional convergence established in the proof for the critical regime of Theorem~\ref{theorem-lattice} gives, for each fixed $c$,
\[
    \bigl(\U_n(r,\ell)\bigr)_{
      \substack{r\in[0,c]\cap\Z\\\ell\in[1,c]\cap\Z}}
    \rightsquigarrow
    \bigl(\U_a(r,\ell)\bigr)_{
      \substack{r\in[0,c]\cap\Z\\\ell\in[1,c]\cap\Z}}.
\]
Because the coordinate mapping from a finite matrix to its saddle value and optimizing indices is continuous on the set of matrices with a unique saddle pair, the continuous mapping theorem implies
\begin{align*}
    \begin{pmatrix}
        T_{n,c}\\
        \rhat_c\\
        \ellhat_c
    \end{pmatrix}
    \rightsquigarrow
    \begin{pmatrix}
        \mathbb{G}_{a,c}\\
        r_{a,c}^*\\
        \ell_{a,c}^*
    \end{pmatrix}.
\end{align*}

On the event $\{\rhat\le c,\ \ellhat\le c\}$, the saddle inequalities imply that $T_{n,c}=T_n$. If the finite collection
$\{\U_n(r,\ell):r\in[0,c]\cap\Z,\ \ell\in[1,c]\cap\Z\}$
has no ties, they also imply that $(\rhat_c,\ellhat_c)=(\rhat,\ellhat)$, since any different truncated optimizing pair would give two distinct pairs with the same value. The finite-dimensional convergence above, the almost sure absence of ties among the limiting Gaussian variables established earlier, and the Portmanteau theorem show that the probability of a finite-sample tie tends to zero for every fixed $c$. Proposition~\ref{large-localization-prop} therefore yields
\[
    \lim_{c\to\infty}\limsup_{n\to\infty}
    \P\{(T_{n,c},\rhat_c,\ellhat_c)
          \ne(T_n,\rhat,\ellhat)\}=0.
\]
Similarly, the almost sure finiteness and uniqueness of $(r_a^*,\ell_a^*)$ imply
\[
    \lim_{c\to\infty}\P\{(\mathbb{G}_{a,c},r_{a,c}^*,\ell_{a,c}^*)
          \ne(\mathbb{G}_a,r_a^*,\ell_a^*)\}=0.
\]
Applying the Lemma~\ref{delocalization-lemma} to these $\R^3$-valued random vectors yields
\begin{align}
    \begin{pmatrix}
        T_n\\
        \rhat\\
        \ellhat
    \end{pmatrix}
    \rightsquigarrow
    \begin{pmatrix}
        \mathbb{G}_a\\
        r_a^*\\
        \ell_a^*
    \end{pmatrix}.                                      \label{critical-joint-conv}
\end{align}

Since $\fhat_1=\fhat_1^{or}-\Ybar$, the two estimators have the same largest fitted constant interval and hence the same scaled widths $(\rhat,\ellhat)$. By the half-open convention in \eqref{n_u_v-def} and the equally spaced product design,
\[
    n_{\rhat,\ellhat}^1=(\rhat+\ellhat)n^{2/3}.
\]
Moreover, $\sqrt{n_{\rhat,\ellhat}^1}\,\Ybar =O_p(n^{1/3})O_p(n^{-1/2})=o_p(1)$, so \eqref{critical-joint-conv} and the continuous mapping theorem give
\begin{align}
    \sqrt{n_{\rhat,\ellhat}^1}
    \{\fhat_1(x_{0,1})-f_1^*(x_{0,1})\}
    &=
    \sqrt{\rhat+\ellhat}\,T_n+o_p(1)\nonumber\\
    &\rightsquigarrow
    \sqrt{r_a^*+\ell_a^*}\,\mathbb{G}_a
    =\L_1^{\mathrm{crit}}(a).                         \label{critical-sqrt-limit}
\end{align}

It remains to show that the family of laws of $\L_1^{\mathrm{crit}}(a)$ varies with $a$. Couple all $a>0$ through the same realization of the random walk $S$. The pair $(r,\ell)=(0,1)$ is the optimizing saddle pair whenever
\[
\begin{aligned}
    \U_a(0,\ell)&\le\U_a(0,1) &&(\ell\ge1),\\
    \U_a(r,1)&\ge\U_a(0,1) &&(r\ge0).
\end{aligned}
\]
For $\ell\ge2$, direct computation shows
\[
    \U_a(0,\ell)-\U_a(0,1)
    =
    \sigma\left\{S(-1)-\f{S(-\ell)}{\ell}\right\}
    -\f{a(\ell-1)}{2}.
\]
Because $S(-\ell)/\ell\to0$ almost surely, $\sup_{\ell\ge2}\frac{\sigma\{S(-1)-S(-\ell)/\ell\}}{\ell-1}<\infty$ a.s., so the first saddle inequality holds simultaneously for all $\ell\ge1$ for all sufficiently large $a$. An identical argument using $S(r)/r\to0$ shows that the second inequality holds for all $r\ge0$ for all sufficiently large $a$. Thus, almost surely, $(0,1)$ is the unique optimizing pair for all sufficiently large $a$. On this event, $\mathbb{G}_a = \U_a(0,1) = -\sigma S(-1) \sim \sigma N(0,1)$, and therefore
\[
    \P\{(r_a^*,\ell_a^*)=(0,1)\}\longrightarrow1
    \qquad(a\to\infty),
\]
which proves
\begin{equation}\label{large-a-normal-limit}
    \L_1^{\mathrm{crit}}(a)/\sigma\rightsquigarrow N(0,1)
    \qquad(a\to\infty).
\end{equation}

For the opposite limit $a\downarrow0$, set
\[
    s_{1,a}=\left(\f{a}{2\sigma}\right)^{2/3},
    \qquad
    s_{2,a}=\sigma\sqrt{s_{1,a}},
\]
so that $a/(2\sigma s_{1,a}^{3/2})=1$. For $g=s_{1,a}r$ and $h=s_{1,a}\ell$, substitution gives
\[
    \f{\U_a(r,\ell)}{s_{2,a}}
    =
    \f{\sqrt{s_{1,a}}\{S(r)-S(-\ell)\}}{g+h}
    +g-h+s_{1,a}.
\]
Define
\[
    T_a:=\f{\mathbb G_a}{s_{2,a}},
    \qquad
    g_a:=s_{1,a}r_a^*,
    \qquad
    h_a:=s_{1,a}\ell_a^*,
\]
and let $\Lambda(t):=W(t)+t^2$, where $W$ is a two-sided Brownian motion. Since $S$ has independent standard Gaussian increments, we have the exact distributional identity
\[
    \bigl(\sqrt{\lambda}S(k)\bigr)_{k\in\Z}
    \stackrel{d}{=}
    \bigl(W(\lambda k)\bigr)_{k\in\Z}
\]
for every $\lambda>0$. Consequently, $(T_a,g_a,h_a)$ has the same joint distribution as the saddle value and optimizing coordinates in
\begin{equation}\label{brownian-lattice-representation}
    \min_{g\in s_{1,a}\Z_{\ge0}}
    \max_{h\in s_{1,a}\Z_{\ge1}}
    \left\{
      \f{\Lambda(g)-\Lambda(-h)}{g+h}+s_{1,a}
    \right\}.
\end{equation}
Equivalently, the value in \eqref{brownian-lattice-representation} is the left slope at the origin of the greatest convex minorant (GCM) of the linear interpolation of $\Lambda(t)+s_{1,a}t$ on $s_{1,a}\Z$. Adding the linear drift $s_{1,a}t$ shifts the resulting GCM slope by $s_{1,a}$ but leaves the contact locations invariant.

As $a\downarrow0$, the lattice mesh $s_{1,a}\to0$, and the piecewise linear interpolant of $\Lambda$ on $s_{1,a}\Z$ converges locally uniformly to $\Lambda$ almost surely. Furthermore, because $W(t)=o(t^2)$ almost surely as $|t|\to\infty$, the quadratic growth in $\Lambda(t)$ ensures that the contact points $(g_a, -h_a)$ of the lattice interpolants and those of $\Lambda$ are eventually confined to a common random compact interval for all sufficiently small $s_{1,a}$. Let $g^*$ and $-h^*$ denote the almost surely unique right and left contact points of the GCM of $\Lambda$ spanning the origin. Under the coupling \eqref{brownian-lattice-representation}, local uniform convergence on this compact interval and the uniqueness of the limiting contact pair $(g^*, -h^*)$ imply the almost-sure convergence of the lattice GCM slope and its contact endpoints. Since this coupling preserves the joint law of $(T_a, g_a, h_a)$, we obtain
\begin{align}
    \begin{pmatrix}
        T_a\\
        g_a\\
        h_a
    \end{pmatrix}
    \rightsquigarrow
    \begin{pmatrix}
        \displaystyle
        \min_{g\ge0}\max_{h>0}
        \left\{\f{W(g)-W(-h)}{g+h}+g-h\right\}\\
        g^*\\
        h^*
    \end{pmatrix}.                                     \label{small-a-saddle-conv}
\end{align}
By the definitions of $T_a$, $g_a$, and $h_a$, the scaling parameters cancel exactly:
\[
    \f{\L_1^{\mathrm{crit}}(a)}{\sigma}
    =\sqrt{\f{g_a+h_a}{s_{1,a}}}\,\f{s_{2,a} T_a}{\sigma}
    =\sqrt{g_a+h_a}\,T_a.
\]
Therefore, \eqref{small-a-saddle-conv} and the continuous mapping theorem yield
\[
    \L_1^{\mathrm{crit}}(a)/\sigma
    \rightsquigarrow
    \sqrt{g^*+h^*}
    \min_{g\ge0}\max_{h>0}
    \left\{\f{W(g)-W(-h)}{g+h}+g-h\right\}
    =:\L_1
    \qquad(a\downarrow0).
\]
The Brownian block pivot $\L_1$ is non-Gaussian \citep{deng2021confidence}. Comparing this limit with the Gaussian limit in \eqref{large-a-normal-limit} demonstrates that the limiting distribution cannot be independent of $a$. This completes the proof.

\end{proof}

\section{Technical lemmas}\label{tech-lemma-sec}

We first record a functional central limit theorem for the process of strip sums.
\begin{lemma}\label{weak-convergence-lemma}
    Suppose $w_{n,1} = n^{-1/3}$, $n_1=n^{\beta_1}$ and $n_{-1}=n/n_1$ where $\beta_1>1/3$. For $m \in \Z_{\ge 0}$, define $S_m = \sum_{i=1}^m \sum_{j=1}^{n_{-1}}\veps_{i,j}$ (with $S_0=0$), where $\{\veps_{i,j}\}$ are i.i.d.\ mean-zero with finite variance $\E \veps_{1,1}^2 = \sigma^2$. For $h \in [0,1]$, define the partial sum process
    \begin{align}\label{Zn-def}
        Z_n(h) = w_{n,1}\left[S_{\lfloor n_1hw_{n,1} \rfloor}+(n_1h w_{n,1} - \lfloor n_1hw_{n,1} \rfloor )\sum_{j=1}^{n_{-1}}\veps_{\lfloor n_1hw_{n,1} \rfloor+1,j}\right].
    \end{align}
    Then $Z_{n} \rightsquigarrow \sigma B$ on $C([0,1])$, where $B$ is a standard Brownian motion.
\end{lemma}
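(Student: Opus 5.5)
The plan is the standard route for a Donsker-type invariance principle for a triangular array: first establish convergence of the finite-dimensional distributions, then establish tightness in $C([0,1])$. Write $K_n:=n_1w_{n,1}=n^{\beta_1-1/3}\to\infty$, which uses $\beta_1>1/3$. The process $Z_n$ is the linear interpolation, at the knots $h=k/K_n$, of the strip partial sums $w_{n,1}S_k$. Each strip variable
\[
\xi_{n,i}:=w_{n,1}\sum_{j=1}^{n_{-1}}\veps_{i,j}
\]
has mean zero and variance $w_{n,1}^2n_{-1}\sigma^2$. Since $n_{-1}=n^{1-\beta_1}$, this variance equals $\sigma^2 n^{1/3-\beta_1}=\sigma^2/K_n$. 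Thus $Z_n$ is exactly a Donsker polygon built from $K_n$ row-wise i.i.d.\ summands per unit time, each with variance $\sigma^2/K_n$.

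For the finite-dimensional distributions, fix $0\le h_1<\dots<h_k\le1$. The interpolation error at each $h$ is at most one summand $|\xi_{n,\lfloor K_nh\rfloor+1}|$, which is $O_p(K_n^{-1/2})=o_p(1)$. It therefore suffices to treat the increments
\[
w_{n,1}\bigl(S_{\lfloor K_nh_m\rfloor}-S_{\lfloor K_nh_{m-1}\rfloor}\bigr),
\]
which are independent across $m$. Each increment is a normalized sum of roughly $(h_m-h_{m-1})n$ i.i.d.\ errors $\veps_{i,j}$, with variance $\sigma^2(h_m-h_{m-1})+o(1)$. The classical i.i.d.\ central limit theorem applies directly: no Lindeberg condition for a genuine triangular array is needed, since the underlying variables are i.i.d.\ and the number of terms tends to infinity. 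Independence of the increments then gives convergence to the increments of $\sigma B$.

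For tightness, I would use the i.i.d.\ structure again. Reindex the errors row by row and write $T_N:=\sum_{p\le N}\veps_p$. Then $Z_n(h)$ at knot $k/K_n$ equals $w_{n,1}T_{kn_{-1}}$, and $w_{n,1}=(n_1w_{n,1}n_{-1})^{-1/2}$ is the usual $N^{-1/2}$ normalization with $N=K_nn_{-1}$. Consequently, $Z_n$ agrees at the knots with the classical Donsker polygon
\[
\Lambda_N(t):=N^{-1/2}\bigl(T_{\lfloor Nt\rfloor}+(Nt-\lfloor Nt\rfloor)\veps_{\lfloor Nt\rfloor+1}\bigr).
\]
Between knots, $Z_n$ is the chord of $\Lambda_N$ over the interval $[k/K_n,(k+1)/K_n]$. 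This yields
\[
\|Z_n-\Lambda_N\|_\infty\le 2\max_k\sup_{t\in[k/K_n,(k+1)/K_n]}|\Lambda_N(t)-\Lambda_N(k/K_n)|,
\]
which is bounded by twice the modulus of continuity of $\Lambda_N$ at mesh $1/K_n\to0$. By Donsker's theorem, $\Lambda_N\rightsquigarrow\sigma B$ in $C([0,1])$ and is tight, so this modulus tends to $0$ in probability. Hence $\|Z_n-\Lambda_N\|_\infty=o_p(1)$, and Slutsky's theorem in $C([0,1])$ gives $Z_n\rightsquigarrow\sigma B$. This comparison in fact makes the finite-dimensional step redundant.

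The main obstacle is making the comparison with $\Lambda_N$ exact: verifying that $w_{n,1}^{-2}=K_nn_{-1}$ and that the knots align, i.e.\ that $k n_{-1}/N=k/K_n$. When $K_n$ is not an integer, $h\in[0,1]$ ranges over up to $\lceil K_n\rceil$ strips. In that case I would take $N=\lceil K_n\rceil n_{-1}$ and absorb the resulting factor $1+O(K_n^{-1})$ into the time scale, which is negligible by uniform continuity of $B$.
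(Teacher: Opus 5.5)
Your proof is correct, but it takes a different route from the paper.

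\textbf{The paper's route.} The paper proves the lemma from scratch. It gets finite-dimensional convergence from Lindeberg--Feller and Cram\'er--Wold. It gets tightness by cutting $[0,1]$ into about $\delta^{-1}$ blocks of $\lceil m_n\delta\rceil$ strips, then combining the L\'evy--Ottaviani inequality with a CLT for the block sums and a Gaussian tail estimate.

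\textbf{Your route.} You reindex the errors row by row. You observe that $Z_n$ is then the chord interpolation, at mesh $1/K_n$, of the classical Donsker polygon of a single i.i.d.\ sequence with $N=w_{n,1}^{-2}=K_nn_{-1}=n^{2/3}$. Consequently $\|Z_n-\Lambda_N\|_\infty$ is at most twice the modulus of continuity of $\Lambda_N$ at $1/K_n\to0$. This vanishes in probability by the tightness part of Donsker's theorem. So neither the fidi step nor a separate tightness estimate is needed; Donsker's theorem is used as a black box.

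\textbf{Non-integer $K_n$.} This is the generic case, since $K_n=n^{\beta_1-1/3}$, and your fix works. With $N'=\lceil K_n\rceil n_{-1}$, let $\widetilde Z_n$ be the chord interpolant of $\Lambda_{N'}$ on the knots $k/\lceil K_n\rceil$. Then one has exactly
\[
Z_n(h)=\sqrt{\lceil K_n\rceil/K_n}\;\widetilde Z_n\bigl(hK_n/\lceil K_n\rceil\bigr),\qquad h\in[0,1].
\]
The amplitude factor costs $O_p(K_n^{-1})$ in sup norm. The time change moves points by at most $1/K_n$. It is controlled by the asymptotic equicontinuity of $\Lambda_{N'}$ (or by continuity of composition at continuous limits). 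That is a more precise justification than your ``uniform continuity of $B$.''

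\textbf{What each buys.} The paper's argument is self-contained and treats the strip sums directly as a triangular array. Yours is shorter, and it makes transparent that $\beta_1>1/3$ enters only to make the strip mesh $1/K_n$ vanish.
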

\begin{proof}
    To establish the weak convergence of $Z_n$ on $C([0,1])$, we first prove convergence of its finite-dimensional distributions. We begin with one-dimensional convergence. For fixed $h \in [0,1]$, we have
\begin{align*}
    \Var\left(w_{n,1}S_{\lfloor n_1hw_{n,1} \rfloor}\right)
    &= w_{n,1}^{2} \lfloor n_1hw_{n,1} \rfloor n_{-1}\sigma^2\\
    &\rightarrow h\sigma^2, \quad \text{as} \ n \rightarrow \infty,
\end{align*}
with $w_{n,1} =n^{-1/3}$ and $n=n_1n_{-1}$. Note that we can write $w_{n,1} S_{\lfloor n_1hw_{n,1} \rfloor}$ as $w_{n,1} \sum_{i=1}^{m_n}\veps_i$ where $m_n = \lfloor n_1hw_{n,1} \rfloor n_{-1}$ and $\veps_i \stackrel{iid}{\sim} P_{\veps}$, where $P_{\veps}$ is the distribution of $\veps$. Therefore, for $\eta>0$,
\begin{align*}
	    w_{n,1}^2 \E \sum_{i=1}^{m_n} \veps_i^2 I\left\{|\veps_i| \ge w_{n,1}^{-1}\eta \right\} &= w_{n,1}^2 \lfloor n_1hw_{n,1} \rfloor n_{-1} \E \veps_1^2 I\left\{ |\veps_1| \ge w_{n,1}^{-1}\eta \right\}\\
	    &\le h \E \veps_1^2 I\left\{ |\veps_1| \ge w_{n,1}^{-1}\eta \right\},
\end{align*}
which converges to $0$ by the dominated convergence theorem. Therefore, by the Lindeberg--Feller central limit theorem, we have
\begin{align}
    w_{n,1}S_{\lfloor n_1hw_{n,1} \rfloor} \rightsquigarrow N(0, h\sigma^2).\label{eqn-27}
\end{align}
By Chebyshev's inequality and $\beta_1> 1/3$, for any $y>0$,
\begin{align}
    \P\left\{w_{n,1}\left|(n_1h w_{n,1} - \lfloor n_1hw_{n,1} \rfloor )\sum_{j=1}^{n_{-1}}\veps_{\lfloor n_1hw_{n,1} \rfloor+1,j}\right|\ge y\right\} \le \f{w_{n,1}^{2}\sigma^2 n_{-1}}{y^2},\label{eqn-28}
\end{align}
which converges to $0$ as $n \rightarrow \infty$. Together, \eqref{Zn-def}--\eqref{eqn-28} and Slutsky's theorem give
\[
    Z_n(h)\rightsquigarrow N(0,h\sigma^2).
\]

Now fix $0\le h_1<\cdots<h_k\le1$. By \eqref{eqn-28} and a union bound, the interpolation terms in $Z_n(h_1),\ldots,Z_n(h_k)$ are jointly $o_p(1)$. Moreover, for $1\le p,q\le k$,
\begin{align*}
    &\operatorname{Cov}\left(
        w_{n,1}S_{\lfloor n_1h_pw_{n,1}\rfloor},
        w_{n,1}S_{\lfloor n_1h_qw_{n,1}\rfloor}
    \right)\\
    &\qquad
    =w_{n,1}^2n_{-1}\sigma^2
      \min\left\{
          \lfloor n_1h_pw_{n,1}\rfloor,
          \lfloor n_1h_qw_{n,1}\rfloor
      \right\}
    \longrightarrow \sigma^2\min\{h_p,h_q\}.
\end{align*}
The same Lindeberg calculation as above applies to every linear combination of these partial sums. Hence, the Cram\'er--Wold device and Slutsky's theorem yield
\[
    \bigl(Z_n(h_1),\ldots,Z_n(h_k)\bigr)
    \rightsquigarrow
    \sigma\bigl(B(h_1),\ldots,B(h_k)\bigr).
\]
Thus, the finite-dimensional distributions of $Z_n$ converge to those of $\sigma B$.

It remains to establish tightness. By Prokhorov's theorem and the Arzel\`a--Ascoli theorem (see Theorems 7.1 and 7.3 in \citet{billingsley1999convergence}), it suffices to prove that for any $\eta>0$,
\begin{align}\label{eqn-29}
    \lim_{\delta \rightarrow 0}\limsup_{n \rightarrow \infty}\P\left\{  \sup_{|s-t| \le \delta} |Z_n(s)-Z_n(t)| \ge \eta\right\} = 0.
\end{align}

Let $m_n=n_1w_{n,1}$, $q_n=\lceil m_n\delta\rceil$, and
$v_n=\lceil m_n/q_n\rceil$. Define
\[
    t_i=\left(\f{iq_n}{m_n}\right)\wedge1,
    \qquad i=0,\ldots,v_n.
\]
Then $v_n\le\delta^{-1}+1$. Since $q_n/m_n\ge\delta$, any two points
at distance at most $\delta$ lie in the same block or in two adjacent blocks.
The piecewise linearity of $Z_n$ therefore gives
\[
    \sup_{|s-t|\le\delta}|Z_n(s)-Z_n(t)|
    \le
    3\max_{1\le i\le v_n}
    \sup_{t_{i-1}\le s\le t_i}
    |Z_n(s)-Z_n(t_{i-1})|.
\]

For a full block, define
\[
    Y_{n,i}
    :=w_{n,1}
      \sum_{r=(i-1)q_n+1}^{iq_n}
      \sum_{j=1}^{n_{-1}}\veps_{r,j}.
\]
The last block may be enlarged to a full block by adding unused independent
increments. Since
\[
    \Var(Y_{n,i})
    =\f{q_n}{m_n}\sigma^2
    \longrightarrow\delta\sigma^2,
\]
the Lindeberg calculation used for \eqref{eqn-27} gives
$Y_{n,i}\rightsquigarrow N(0,\delta\sigma^2)$ for fixed $\delta$.
For $\delta$ small enough that $36\delta\sigma^2<\eta^2$, the
L\'evy--Ottaviani inequality and Chebyshev's inequality imply
\begin{align*}
    &\limsup_{n\to\infty}
      \P\left\{\sup_{|s-t|\le\delta}|Z_n(s)-Z_n(t)|\ge\eta\right\}\\
    &\quad\le
      \f{\delta^{-1}+1}
      {1-36\delta\sigma^2/\eta^2}
      \P\left\{|\sigma\sqrt{\delta}Z|\ge\eta/6\right\},
\end{align*}
where $Z\sim N(0,1)$. The right-hand side converges to zero as
$\delta\downarrow0$. Indeed,
\[
    \delta^{-1}
    \P\left\{|Z|\ge\f{\eta}{6\sigma\sqrt{\delta}}\right\}
    \le
    \f{36\sigma^2}{\eta^2}
    \E\left[
       Z^2\ind\left\{|Z|\ge\f{\eta}{6\sigma\sqrt{\delta}}\right\}
    \right]
    \longrightarrow0.
\]
This proves \eqref{eqn-29}.

\end{proof}

The following lemma is used to control the noise term of LSEs.
For an interval $I\subset\R$, define
\[
    \vepsbar\big|_{I}
    := \f{\sum_{i=1}^n \veps_i\,\ind\{x_{i,1}\in I\}}{\sum_{i=1}^n \ind\{x_{i,1}\in I\}}.
\]
\begin{lemma}\label{error-control}
    Under the same conditions as in Theorem \ref{theorem-lattice}, set
    $d_n=(n_1w_{n,1})^{-1}$. For $r\ge0$, define
    \[
        \widetilde N_r
        :=\sum_{i=1}^n
        \ind\{x_{i,1}\in(x_{0,1}-d_nw_{n,1},
                          x_{0,1}+rw_{n,1}]\}.
    \]
    Then
    \[
        \sup_{\ell\ge d_n}
        \left|\vepsbar\big|_{(x_{0,1}-\ell w_{n,1},
                                  x_{0,1}+rw_{n,1}]}\right|
        =O_p\big(\sigma \widetilde N_r^{-1/2}\big),
    \]
    where the $O_p$ constant does not depend on $r$. The bound is
    $O_p(w_{n,1})$ whenever $\widetilde N_r\gtrsim w_{n,1}^{-2}$. This holds
    for $r=0$ when $\beta_1\le1/3$ and for $r\ge1$ when $\beta_1>1/3$.
    The same conclusions hold after interchanging the two endpoints.
\end{lemma}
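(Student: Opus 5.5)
The plan is to reduce the uniform bound to a maximal inequality for partial sums of independent errors, with the window grown in one direction only. Fix $r\ge0$ and write $I_\ell=(x_{0,1}-\ell w_{n,1},x_{0,1}+rw_{n,1}]$. Since the average only changes when the left endpoint crosses a knot, the supremum over $\ell\ge d_n$ can be taken over $\ell\in d_n\Z_{\ge1}$. For these $\ell$, $I_\ell$ is the union of the base window $I_{d_n}$, which contains $\widetilde N_r$ observations, and $k=\ell/d_n-1$ additional full strips to the left, each containing $n_{-1}$ observations. Let $S_0$ be the error sum over $I_{d_n}$ and $M_k$ the sum over the $k$ added strips, so that
\[
\vepsbar\big|_{I_\ell}=\f{S_0+M_k}{\widetilde N_r+kn_{-1}}.
\]
Here $S_0$ and $(M_k)_{k\ge0}$ are independent, and $(M_k)$ is a martingale in $k$ with $\Var(M_k)=\sigma^2kn_{-1}$. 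If the window reaches the boundary, the sum simply stops, which only shortens the index range.

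For the first term, $|S_0|/(\widetilde N_r+kn_{-1})\le|S_0|/\widetilde N_r$, and $S_0=O_p(\sigma\widetilde N_r^{1/2})$ by Chebyshev, with a constant free of $r$. For the second term I would use a peeling argument. Set $N=\widetilde N_r$, and let $K_0$ be the integer with $K_0n_{-1}\le N<(K_0+1)n_{-1}$ (possibly $K_0=0$). Split the $k$ range into the block $k\le K_0\vee1$ and dyadic blocks $2^m(K_0\vee1)<k\le2^{m+1}(K_0\vee1)$. On the $m$-th block the denominator is at least $N\vee 2^m(K_0\vee1)n_{-1}\gtrsim 2^m(N\vee n_{-1})$. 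Doob's $L^2$ maximal inequality gives
\[
\E\max_{k\le 2^{m+1}(K_0\vee1)}M_k^2\le 4\sigma^2 2^{m+1}(K_0\vee1)n_{-1}\lesssim\sigma^2 2^m(N\vee n_{-1}).
\]
Chebyshev and a union bound over $m$ then give
\[
\P\Big(\sup_k\f{|M_k|}{N+kn_{-1}}>tN^{-1/2}\sigma\Big)\lesssim\sum_m\f{2^m(N\vee n_{-1})N}{t^2 2^{2m}(N\vee n_{-1})^2}\lesssim t^{-2}.
\]
This uses $N\le N\vee n_{-1}$ and is uniform in $r$ and $n$. Combining the two pieces yields the claimed $O_p(\sigma\widetilde N_r^{-1/2})$ with an $r$-free constant.

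The remaining claims are counting. For $\beta_1\le1/3$, $d_nw_{n,1}=n_1^{-1}$, so $\widetilde N_0=n_{-1}=n^{1-\beta_1}$. When $\beta_1<1/3$ this equals $w_{n,1}^{-2}$, and when $\beta_1=1/3$ it equals $n^{2/3}=w_{n,1}^{-2}$. For $\beta_1>1/3$ and $r\ge1$, the window contains at least $\lfloor n_1w_{n,1}\rfloor$ strips of size $n_{-1}$, so $\widetilde N_r\gtrsim n w_{n,1}=n^{2/3}=w_{n,1}^{-2}$. The interchanged-endpoint version follows from the identical argument with right strips added to a fixed left base.

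The main obstacle is making the peeling uniform when $N$ is much smaller than one strip ($K_0=0$). Starting the dyadic blocks at scale $N\vee n_{-1}$ is exactly what makes the sum $\sum_m 2^{-m}$ converge regardless of the ratio $N/n_{-1}$, so that is the step where I would be most careful.
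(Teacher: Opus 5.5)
Your proposal is correct and follows essentially the paper's route, dyadic peeling combined with Doob's $L^2$ maximal inequality; the differences are cosmetic: you split off the base-window sum and conclude with Chebyshev and a union bound, whereas the paper peels the single martingale $M_\ell$ of error sums over the whole window $I_\ell$ (base included) by its observation count $m_\ell$ and bounds $\E\sup_{\ell\ge d_n}\bigl|\vepsbar\big|_{I_\ell}\bigr|\le C\sigma\widetilde N_r^{-1/2}$ before applying Markov's inequality. Also, the case $K_0=0$ that you flagged as the main obstacle never arises, because the base window $(x_{0,1}-n_1^{-1},x_{0,1}+rw_{n,1}]$ always contains the full strip at $x_{0,1}\in\calX_1$, so $\widetilde N_r\ge n_{-1}$.
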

\begin{proof}
    Fix $r\ge0$ and write
    $I_\ell=(x_{0,1}-\ell w_{n,1},x_{0,1}+rw_{n,1}]$ for
    $\ell\ge d_n$. Let
    $M_\ell=\sum_{i=1}^n\veps_i\ind\{x_{i,1}\in I_\ell\}$ and
    $\mathcal G_\ell=\sigma(\veps_i\ind\{x_{i,1}\in I_\ell\}:i=1,\ldots,n)$.
    Then $\{M_\ell\}_{\ell\ge d_n}$ is a martingale with respect to
    $\{\mathcal G_\ell\}_{\ell\ge d_n}$. If
    $m_\ell=\sum_{i=1}^n\ind\{x_{i,1}\in I_\ell\}$, then
    $\E M_\ell^2=\sigma^2m_\ell$ and
    $m_\ell\ge m_{d_n}=\widetilde N_r$.

    Partition $[d_n,\infty)$ into
    $B_j=\{\ell\ge d_n:2^{j-1}\le m_\ell\le2^j\}$ for
    $j\ge j_0:=\lceil\log_2\widetilde N_r\rceil$. On $B_j$, the denominator
    of $\vepsbar|_{I_\ell}=M_\ell/m_\ell$ is at least $2^{j-1}$, and Doob's
    inequality gives
    $\E\sup_{\{\ell:m_\ell\le2^j\}}M_\ell^2\le4\sigma^22^j$. Therefore,
    \begin{align*}
        \E\sup_{\ell\ge d_n}\left|\vepsbar|_{I_\ell}\right|
        \le \sum_{j \ge j_0} 2^{1-j}\Big(\E \sup_{\{\ell :\, m_\ell \le 2^j\}} M_\ell^2\Big)^{1/2}
        \le \sum_{j \ge j_0} 2^{1-j}\cdot 2\sigma 2^{j/2}
        = 4\sigma \sum_{j \ge j_0} 2^{-j/2}
        \le C \sigma \widetilde N_r^{-1/2},
    \end{align*}
    where $C$ is an absolute constant. Markov's inequality proves
    the claim. Interchanging the endpoints gives the analogous bound for a
    fixed left extension.
\end{proof}

\begin{lemma}\label{lm-switch}
    Let $F$ be a continuous stochastic process on $[-M,M]$, where $M>0$ is a constant, let $I\subset[0,M]$ be a nonempty compact set, and let $J\subset[-M,0)$ be nonempty. Then, for every $t\in \R$,
        \begin{align*}
            \left\{ \inf_{u \in I} \sup_{v \in J} \f{F(u)- F(v)}{u-v}\le t\right\}=\left\{ \inf_{u \in I}\sup_{v \in J} \bigl[F(u) - F(v) - t(u-v)\bigr] \le 0 \right\}.
        \end{align*}
    In particular, the two events have the same probability.
    \end{lemma}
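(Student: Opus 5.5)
The plan is to prove the event identity pathwise: fix a realization of $F$ and show that the two real-valued (possibly $+\infty$) quantities satisfy $\inf_{u\in I}\sup_{v\in J}\frac{F(u)-F(v)}{u-v}\le t$ if and only if $\inf_{u\in I}\sup_{v\in J}[F(u)-F(v)-t(u-v)]\le 0$. Equality of the events as subsets of the sample space then follows, and with it equality of probabilities. No probabilistic input is needed.

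\emph{Step 1 (pointwise equivalence in $u$).} For $u\in I\subset[0,M]$ and $v\in J\subset[-M,0)$ we always have $u-v>0$. Define
\[
g(u):=\sup_{v\in J}\frac{F(u)-F(v)}{u-v},\qquad h(u):=\sup_{v\in J}\bigl[F(u)-F(v)-t(u-v)\bigr],
\]
both valued in $(-\infty,+\infty]$. Because $u-v>0$, for each fixed $v$ the inequality $\frac{F(u)-F(v)}{u-v}\le t$ is equivalent to $F(u)-F(v)-t(u-v)\le 0$. Quantifying over all $v\in J$ gives
\[
g(u)\le t \iff h(u)\le 0\qquad\text{for every } u\in I.
\]

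\emph{Step 2 (passing through the infimum).} The only subtlety is that an infimum $\le t$ need not be attained. The plan is to show that both infima over $I$ are attained, so that each event reduces to the existence of a single $u^*\in I$ with $g(u^*)\le t$, respectively $h(u^*)\le0$. Step 1 then identifies these two existence statements.

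For attainment, note that for each fixed $v\in J$ the maps
\[
u\mapsto \frac{F(u)-F(v)}{u-v}\qquad\text{and}\qquad u\mapsto F(u)-F(v)-t(u-v)
\]
are continuous on $I$. This uses the continuity of $F$ and the fact that $u-v\ge -v>0$. Hence $g$ and $h$, as pointwise suprema of continuous functions, are lower semicontinuous extended-real functions on $I$. A lower semicontinuous function on a nonempty compact set attains its infimum, also in the extended sense: if the infimum is $+\infty$, both events simply fail. Therefore
\[
\inf_I g\le t \iff \exists u^*\in I:\ g(u^*)\le t \iff \exists u^*\in I:\ h(u^*)\le 0 \iff \inf_I h\le 0.
\]

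The main obstacle is Step 2, namely the attainment of the outer infimum. This is exactly where compactness of $I$ is used, whereas $J$ need not be compact. One must also allow $g$ to equal $+\infty$, for instance when $0\in I$ and $J$ accumulates at $0$. Lower semicontinuity handles this cleanly, and I would present it in the extended-real sense to avoid a separate case.
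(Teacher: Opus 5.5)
Your proof is correct. It works pathwise, as the paper's does, but it handles the non-attainment issue differently.

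The paper proves attainment only for $\phi(u)=\sup_{v\in J}[F(u)-F(v)-t(u-v)]$, which is your $h$. It shows that $\phi$ is a finite, genuinely continuous function on $I$ via $|\phi(u)-\phi(u')|\le|F(u)-F(u')|+|t|\,|u-u'|$. That gives $B\le 0\Rightarrow A\le t$ exactly as in your Step 1. For the reverse direction the paper never asks whether the ratio infimum is attained. It takes an $\epsilon$-optimal $u_\epsilon$ for the ratio and uses $0<u_\epsilon-v\le 2M$ to turn $\frac{F(u_\epsilon)-F(v)}{u_\epsilon-v}\le t+\epsilon$ into $\phi(u_\epsilon)\le 2M\epsilon$, then lets $\epsilon\downarrow 0$.

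You instead get attainment of both infima at once, from lower semicontinuity of suprema of continuous functions on the compact set $I$. Both events then reduce to the existence of a single $u^*$, and your pointwise equivalence finishes the argument.

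Your route is more symmetric and covers the case $g(u)=+\infty$ (e.g.\ $0\in I$ with $J$ accumulating at $0$) without a separate case. It also needs only continuity of $F$ on $I$ and $u-v>0$, not boundedness of $F$ on $J$ or of $u-v$. The paper's route avoids extended-real semicontinuity, but it relies on the uniform bound $u-v\le 2M$ and on $\phi$ being finite.
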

    \begin{proof}
    Fix $t \in \R$ and define
    \begin{align*}
        A &= \inf_{u \in I}\sup_{v \in J} \f{F(u)- F(v)}{u-v},\\
        B &= \inf_{u \in I} \phi(u), \qquad \text{where } \phi(u) := \sup_{v \in J} \bigl[F(u) - F(v) - t(u-v)\bigr].
    \end{align*}
    Since $F$ is continuous on the compact interval $[-M,M]$, $\phi$ is finite, and $|\phi(u) - \phi(u')| \le |F(u) - F(u')| + |t|\,|u-u'|$ for $u, u' \in I$, so $\phi$ is continuous on $I$. As $I$ is compact, there exists $u_* \in I$ with $B = \phi(u_*)$. Note also that $0 < u - v \le 2M$ for all $u \in I$ and $v \in J$.

    Suppose first that $A \le t$. For every $\epsilon>0$ there exists $u_\epsilon \in I$ such that $\f{F(u_\epsilon)-F(v)}{u_\epsilon-v} \le t+\epsilon$ for all $v \in J$. Multiplying by $u_\epsilon - v \in (0, 2M]$ gives $F(u_\epsilon)-F(v) - t(u_\epsilon - v) \le 2M\epsilon$ for all $v \in J$, so $B \le \phi(u_\epsilon) \le 2M\epsilon$. Letting $\epsilon \downarrow 0$ yields $B \le 0$.

    Conversely, suppose that $B \le 0$. Then $\phi(u_*) \le 0$, that is, $F(u_*)- F(v) - t(u_*-v) \le 0$ for all $v \in J$. Dividing by $u_* - v>0$ gives $\f{F(u_*) - F(v)}{u_*-v}\le t$ for all $v \in J$, and hence $A \le \sup_{v \in J}\f{F(u_*) - F(v)}{u_*-v} \le t$.
    \end{proof}

We state the following converging-together result from
\citet{rao1969estimation}.
    \begin{lemma}\label{delocalization-lemma}
        Let $\{W_{n,c}\}$, $\{W_n\}$, $\{W_c\}$, and $W$ be random elements
        taking values in a common metric space. Suppose that:
\begin{itemize}
    \item $\lim_{c \rightarrow  \infty}\limsup_{n \rightarrow \infty}\P\{W_{n,c} \neq W_n\} = 0$.
    \item  For every $c > 0$, $W_{n,c} \rightsquigarrow W_c$ as $n \rightarrow  \infty$.
    \item $\lim_{c \rightarrow \infty} \P\{W_c \neq W\} = 0$.
\end{itemize}
Then $W_n \rightsquigarrow W$ as $n \rightarrow \infty$.
\end{lemma}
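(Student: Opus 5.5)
This is the converging-together lemma of \citet{rao1969estimation}, and I would prove it with the Portmanteau theorem applied to bounded Lipschitz test functions. By the Portmanteau theorem it suffices to show that $\E h(W_n)\to\E h(W)$ for every bounded Lipschitz $h$ on the common metric space. Throughout, write $\|h\|_\infty$ for the sup bound of $h$.

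Fix such an $h$ and an $\eta>0$. The plan is a three-term triangle inequality:
\[
    |\E h(W_n)-\E h(W)|
    \le |\E h(W_n)-\E h(W_{n,c})|
       +|\E h(W_{n,c})-\E h(W_c)|
       +|\E h(W_c)-\E h(W)|.
\]
The first and third terms are controlled by the discrepancy probabilities. The first is at most $2\|h\|_\infty\P\{W_{n,c}\ne W_n\}$, since the integrand vanishes off that event. Similarly, the third is at most $2\|h\|_\infty\P\{W_c\ne W\}$.

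Next I choose $c$. By the first and third hypotheses, there is a $c$ with
\[
    \limsup_{n\to\infty}\P\{W_{n,c}\ne W_n\}<\eta
    \quad\text{and}\quad
    \P\{W_c\ne W\}<\eta .
\]
The first hypothesis only controls a $\lim_c\limsup_n$, so "for all large $c$" must be read along the limit in $c$; I would take $c$ large enough that both quantities fall below $\eta$ simultaneously. With this $c$ fixed, the middle term tends to zero as $n\to\infty$ by the second hypothesis.

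Taking $\limsup_n$ then gives
\[
    \limsup_{n\to\infty}|\E h(W_n)-\E h(W)|\le 4\|h\|_\infty\eta .
\]
Since $\eta$ is arbitrary, the claim follows.

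There is no real obstacle here. The only subtle point is measurability: the events $\{W_{n,c}\ne W_n\}$ must be measurable. For separable metric spaces such as $\R^k$, which is all the paper uses, this holds automatically. Otherwise one would replace $\P$ by the outer probability $\P^*$, and the same argument goes through unchanged.
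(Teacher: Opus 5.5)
Your proof is correct and complete. The paper gives no argument of its own for this lemma---it simply cites \citet{rao1969estimation}---and your three-term bound with bounded test functions is the standard proof of this converging-together result. Boundedness of $h$ handles the first and third terms and continuity handles the middle one, so restricting to Lipschitz $h$ is harmless but unnecessary.
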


\bibliographystyle{abbrvnat}
\bibliography{references}

\end{document}